%% file: main.tex
\documentclass[
    a4paper,
    10pt,
    oneside,
    onecolumn,
    3p,
    number,
    sort&compress
]{elsarticle}

\input{tex/packages}

\input{tex/commands}

\input{tex/glossaries}

\begin{document}

\begin{frontmatter}
\title{On the Dynamics of Linear Finite Dynamical Systems Over Galois Rings}

\author[1]{Jonas Kantic\corref{cor1}}
\ead{jonas.kantic@tum.de}

\author[2]{Claudio Qureshi}
\author[3]{Daniel Panario}
\author[1]{Fabian C. Legl}

\cortext[cor1]{Corresponding author}

\affiliation[1]{
    organization={Technical University of Munich, School of Computation, Information and Technology},
    postcode={80333},
    city={Munich},
    country={Germany}
}
\affiliation[2]{
    organization={Instituto de Matemática y Estadística Rafael Laguardia, Facultad de Ingeniería, Universidad de la República},
    postcode={11300},
    city={Montevido},
    country={Uruguay}
}
\affiliation[3]{
    organization={School of Mathematics and Statistics, Carleton University},
    postcode={K1S 5B6},
    city={Ottawa},
    country={Canada}
}
\input{sections/0_abstract}

\begin{keyword}
    finite dynamical systems \sep Galois ring \sep algebraic algorithms
    \MSC[2020]{13H99, 37P99, 68Q25} 
\end{keyword}
\end{frontmatter}

\input{sections/1_introduction}
\input{sections/2_preliminaries}
\input{sections/3_generalized_hensel}
\input{sections/4_cycle_lengths}
\input{sections/5_transient_length}
\input{sections/6_conclusion}

\section*{Acknowledgment}
Jonas Kantic was partially funded by the Bavarian Ministry of Economic Affairs, Regional Development and Energy in the context of
the Bavarian Collaborative Research Program (BayVFP), funding line
Digitization, funding area Information, and Communication Technology (Grant no. DIK-2104-0055// DIK0320/01).

Claudio Qureshi was partially supported by the STIC-AmSud project MOV\_CO\_2024\_9\_1500328 and by CSIC UdelaR.

D. Panario was partially funded by the Natural Sciences and Engineering Research Council of Canada (NSERC), reference number RGPIN-2024-05341.

\bibliographystyle{elsarticle-num}
\bibliography{Promotion}
\end{document}

%% file: tex/packages.tex
\usepackage{lmodern}
\usepackage[utf8]{inputenc}
\usepackage[T1]{fontenc}
\usepackage{microtype}
\usepackage{graphicx}
\usepackage{amsmath, amsfonts, amssymb, amsthm}
\usepackage{array}
\usepackage{stackrel}
\usepackage{bm}
\usepackage{mathrsfs}
\usepackage{mathtools}
\usepackage{xcolor}
\usepackage{hyperref}
\usepackage[capitalize]{cleveref}
\usepackage[ruled]{algorithm}
\usepackage{algpseudocode}
\usepackage{mleftright}
\usepackage[xindy]{glossaries}
\usepackage{tikz-cd}
\glsdisablehyper

%% file: tex/commands.tex
\newcommand{\intnum}{\mathbb{Z}}
\newcommand{\natnum}{\mathbb{N}}

\newcommand{\set}[1]{\mathcal{#1}}

\newcommand{\mat}[1]{\bm{\MakeUppercase{#1}}}

\renewcommand{\vec}[1]{\bm{\MakeLowercase{#1}}}

\DeclareMathOperator{\lcm}{lcm}
\newcommand{\isomorphic}{\cong}
\newcommand{\order}{\mathcal{O}}

\DeclareMathOperator{\rad}{rad}
\DeclareMathOperator{\per}{per}
\DeclareMathOperator{\pper}{pper}
\newcommand{\ibar}[2][3]{{}\mkern#1mu\overline{\mkern-#1mu#2}}
\newcommand{\bmat}[1]{\ibar{\mat{#1}}}

\DeclareMathOperator{\sys}{}

\newcommand{\erel}[1]{\vcenter{\hbox{\(\stackrel{#1}{\sim}\)}}}

\newcommand{\field}{\mathbb{F}}

\DeclareMathOperator{\gring}{GR}
\newcommand{\pring}[1]{#1\mleft[ x \mright]}
\newcommand{\qring}[2]{#1/#2}

\newcommand{\intmodring}{\intnum}
\newcommand{\ideal}[1]{\langle #1 \rangle}

\newcommand{\csetsym}{\mathcal{L}}
\newcommand{\cset}{\csetsym}
\newcommand{\csetp}{\csetsym'}
\newcommand{\fgraph}{\mathcal{G}}

\algrenewcommand\algorithmicrequire{\textbf{Input:}}
\algrenewcommand\algorithmicensure{\textbf{Output:}}
\algnewcommand\algorithmicforeach{\textbf{for each}}
\algdef{S}[FOR]{ForEach}[1]{\algorithmicforeach\ #1\ \algorithmicdo}

\newtheorem{lemma}{Lemma}
\newtheorem{proposition}{Proposition}
\newtheorem{theorem}{Theorem}
\newtheorem{corollary}{Corollary}

\theoremstyle{definition}
\newtheorem{definition}{Definition}
\newdefinition{example}{Example}

%% file: tex/glossaries.tex
\newacronym{lcm}{lcm}{Least Common Multiple}
\newacronym[longplural={Cellular Automata}]{ca}{CA}{Cellular Automaton}
\newacronym{gf}{GF}{Galois Field}
\newacronym{gr}{GR}{Galois Ring}
\newacronym{rc}{RC}{Reservoir Computing}
\newacronym{fds}{FDS}{Finite Dynamical System}
\newacronym{lfds}{LFDS}{Linear Finite Dynamical System}
\newacronym{clfds}{CLFDS}{Cyclic Linear Finite Dynamical System}
\newacronym{pbcfds}{PBCFDS}{Primary Bijective Circulant Finite Dynamical System}
\newacronym{adam}{ADAM}{Analysis of Dynamic Algebraic Models}
\newacronym{snf}{SNF}{Smith Normal Form}
\newacronym{ufd}{UFD}{Unique Factorization Domain}
\newacronym{pid}{PID}{Principal Ideal Domain}

%% file: sections/0_abstract.tex
\begin{abstract}
    Linear finite dynamical systems play an important role, for example, in coding theory and simulations.
    Methods for analyzing such systems are often restricted to cases in which the system
    is defined over a field 
    or lack practicability to effectively analyze the system's dynamical behavior.
    However, when analyzing and prototyping finite dynamical systems, it is often desirable to quickly obtain basic information such as
    the length of cycles and transients that appear in its dynamics, which is reflected in the structure of the connected components
    of the corresponding functional graphs.
    In this paper, we extend the analysis of the dynamics of linear finite dynamical systems that act over cyclic modules to Galois rings.
    Furthermore, we propose algorithms for computing the length of the cycles and the height of the trees that make up
    their functional graphs.
\end{abstract}

%% file: sections/1_introduction.tex
\section{Introduction}
\label{sec:introduction}

\Glspl{lfds} represent a fundamental entity in various fields of research.
They are employed, for example, in coding theory \cite{Gadouleau2018}, in the design and analysis of linear feedback shift registers \cite{Chang2017},
in the characterization of involutions relevant to cryptographic applications \cite{Panario2018},
or in simulations of regulatory genetic networks \cite{Bollman2007} and multi-agent networks over finite fields \cite{Pasqualetti2014,Meng2020}.
In this paper, we focus on a subclass of \glspl{lfds} that act on cyclic modules.
A recent application of such \glspl{lfds} uses finite uniform linear \glspl{ca} as dynamical reservoirs within the reservoir computing framework for machine learning-based time-series processing \cite{Kantic2024}.

When investigating \glspl{lfds}, it is essential to identify the system's dynamical behavior.
A common approach is to decompose their dynamics into cycles and trees.
Information about the set of cycle lengths and the height of the trees provides crucial insights into their dynamics.
Extensive academic research on the dynamics of \glspl{lfds} has been conducted, for example, in \cite{Toledo2005,Deng2015,Wei2016,Panario2018,Qureshi2019,Rohde2024}.

In practical applications, it is often desirable to quickly obtain information about the system's general behavior, i.e., identifying the cycle and transient lengths present in its functional graph.
However, existing approaches either target systems defined over finite fields and lack generalization toward finite commutative rings, or they rely on the complex computation of minimal polynomials and do not leverage the specific structure of the target module, resulting in algorithms with high runtime complexity that hinder the analysis and undermine the practical applicability of theoretical results.

In this work, we analyze the structure of the connected components in the functional graphs of \glspl{lfds} that act on cyclic modules over \glspl{gr}.
This includes the set of cycle lengths and the maximal number of steps after which the systems stabilize in a cycle,
which corresponds to the height of the trees in their functional graphs.
Furthermore, we propose efficient algorithms that compute the set of cycle lengths and the height of the trees of such \glspl{lfds}.

The rest of this work is structured as follows.
In \cref{sec:preliminaries}, we provide an overview of the background on relevant concepts in abstract algebra and the theory of \glspl{fds}.
Subsequently, in \cref{sec:hensel} we discuss the decomposition of \glspl{lfds} on cyclic modules over \glspl{gr} into products of a nilpotent and a bijective subsystem
based on an extension of Hensel's lifting lemma.
This is followed in \cref{sec:cycles} by an analysis of the cycle lengths that appear in the dynamics of such \glspl{lfds},
which extends to the discussion of our proposed algorithm and its runtime complexity.
After that, we propose and discuss an algorithm to compute the height of the trees in \cref{sec:height}.
Finally, conclusions are drawn in \cref{sec:conclusion}.

%% file: sections/2_preliminaries.tex
\section{Preliminaries and Related Work}
\label{sec:preliminaries}
This section introduces the necessary concepts and establishes the notation required for subsequent discussions.
First, we provide a brief overview of the essential background in abstract algebra in \cref{subsec:abstract-algebra}.
Next, we introduce relevant background on the theory of \glspl{fds} in \cref{subsec:fds}, including \glspl{lfds}.
In \cref{subsec:related-work}, we provide an overview of related work.

\subsection{Background in Abstract Algebra}
\label{subsec:abstract-algebra}

Let \(p\) be a prime number and fix some monic polynomial \(h \in \intnum[t]\) of degree \(d = \deg(h)\) such that \(h\) is irreducible modulo \(p\).
If \(\ideal{h}\) is the principal ideal generated by \(h\), then
the ring of truncated polynomials with degree less than \(d\) over \(\intnum\) can be constructed as a quotient ring
\[R \coloneqq \qring{\intnum[t]}{\ideal{h}}.\]


The following lemma shows that the ring \(R\), as well as certain quotient rings of the form \(\qring{\pring{R}}{\ideal{m}}\), \(m\in\pring{R}\), are torsion-free as \(\mathbb{Z}\)-module (i.e. \(nr=0\) with \(n\in \mathbb{Z}, n\neq 0\) implies \(r=0\)). Throughout the paper, all rings are assumed to be commutative with unity.

\begin{lemma}
\label{lem:Z-free-modules}
Let \(S\) be a ring and let \( g \in \pring{S} \) be a monic polynomial. If \(S\) is torsion-free, then \( \qring{S[x]}{\ideal{g}}\) is also torsion-free. In particular, \(R = \qring{\mathbb{Z}[t]}{\ideal{h}}\) is torsion-free.
\end{lemma}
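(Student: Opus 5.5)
The plan is to use the fact that division by a monic polynomial gives unique remainders, so that \(\qring{S[x]}{\ideal{g}}\) is, as an additive group, isomorphic to a finite direct sum of copies of \(S\). Torsion-freeness then passes from \(S\) to the quotient componentwise.

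Let \(n = \deg(g)\). Since \(g\) is monic, the division algorithm works in \(S[x]\) over an arbitrary commutative ring. For every \(f \in S[x]\) there are \(q, r \in S[x]\) with \(f = qg + r\) and \(\deg(r) < n\), and this \(r\) is unique. Uniqueness holds because a nonzero multiple \(qg\) of a monic \(g\) has degree \(\deg(q) + n \geq n\), as the leading coefficient of \(q\) times \(1\) is nonzero. Consequently every class in \(\qring{S[x]}{\ideal{g}}\) has a unique representative \(r = \sum_{i<n} r_i x^i\). The map sending a class to \((r_0, \dots, r_{n-1})\) is an isomorphism of additive groups, hence of \(\intnum\)-modules, onto \(S^n\).

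Now take \(k \in \intnum\) with \(k \neq 0\) and a class \([r]\) satisfying \(k[r] = 0\). The polynomial \(kr\) still has degree less than \(n\), so it is the unique reduced representative of \(k[r]\). It must therefore equal the zero polynomial, which gives \(k r_i = 0\) for every \(i\). Since \(S\) is torsion-free, each \(r_i = 0\), and hence \([r] = 0\).

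For the particular case \(R = \qring{\intnum[t]}{\ideal{h}}\), we take \(S = \intnum\). This ring is torsion-free because it is an integral domain of characteristic zero, and \(h\) is monic by assumption, so the general statement applies.

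The only step needing care is the uniqueness of the remainder over a general ring, which may have zero divisors. Here the monicity of \(g\) is exactly what is needed: the leading coefficient of \(g\) is \(1\), so no zero-divisor cancellation can lower the degree of \(qg\).
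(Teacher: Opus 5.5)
Your proof is correct, but it takes a different route from the paper.

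\textbf{The paper's route.} The paper does not reduce first. It takes an arbitrary \(r\) with \(ar = gk\) and picks the highest-degree coefficient \(b\) of the cofactor \(k\) that is not divisible by \(a\). Writing \(k = ax^{m+1}k_0 + bx^m + k_1\), it observes that \((bx^m + k_1)g = a(r - x^{m+1}k_0 g)\). The left side has leading coefficient \(b\) because \(g\) is monic, which contradicts \(a \nmid b\). Hence \(a\) divides every coefficient of \(k\), and torsion-freeness of \(S\) then lets one cancel \(a\). The paper leaves that last step implicit.

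\textbf{Your route.} You first pass to the unique remainder of degree less than \(\deg(g)\), which removes the cofactor from the picture. Then \(kr\) is already reduced, so \(k[r] = 0\) forces \(kr = 0\) coefficientwise.

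\textbf{Comparison.} The core ingredient is the same in both arguments: monicity of \(g\) prevents cancellation in the top coefficient of a multiple of \(g\). In your version this appears as uniqueness of remainders. Your packaging proves more, however. It shows that \(\qring{S[x]}{\ideal{g}}\) is a free \(S\)-module with basis \(1, x, \dots, x^{\deg(g)-1}\). In particular \(R\) is free of rank \(d\) over \(\intnum\), which is the stronger claim the paper asserts in the last sentence of its proof but never derives from its coefficient argument. The paper's version, in exchange, is a self-contained elementwise contradiction that does not need the module isomorphism.
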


\begin{proof}
Let $a\in \mathbb{Z}$ and $r \in S[x]$. We have to prove that if $a r(x) \equiv 0 \pmod{g(x)}$ with $a\neq 0$ then $r(x)\equiv 0 \pmod{g(x)}$. The congruence above can be written as $a r(x)=g(x)k(x)$ for some $k\in S[x]$. If $a \nmid k(x)$ we can write $k(x)= ax^{m+1} k_0(x)+ b x^m + k_1(x)$ with $a \nmid b$ and $\deg(k_1)<m$. Then, all the coefficients of $bx^m g(x) + k_1(x)g(x)=a(r(x)-x^{m+1}k_0(x))$ are divisible by \(a\). However, since $g$ is monic and  $\deg(k_1)<m$, the leading coefficient of $bx^m g(x) + k_1(x)g(x)$ is \(b\), which is not divisible by $a$. This yields a contradiction. In particular, since $h\in \mathbb{Z}[t]$ is monic and $\mathbb{Z}$ is torsion-free (as $\mathbb{Z}$-module) we have that $R$ is free as a $\mathbb{Z}$-module.
\end{proof}


We denote the ring of integers modulo \(p\) as \(\intmodring_p\).
Recall that \(\intmodring_p\) is isomorphic to the finite field of characteristic \(p\), denoted by \(\field_p\).
An extension field of order \(q=p^d\) can be constructed as
\begin{equation*}
    \field_q \isomorphic \qring{\intnum_p[t]}{\ideal{h(t)}} \isomorphic \qring{\intnum[t]}{\ideal{h(t),p}} \isomorphic \qring{R}{\ideal{p}}.
\end{equation*}

\Glspl{gr} are generalizations of finite fields.
For any \(e\in\intnum^+\), a Galois ring of characteristic \(p^e\) and extension degree \(d\) can be defined as
\begin{equation*}
    \gring(p^e,d) \isomorphic \qring{\intnum_{p^e}[t]}{\ideal{h(t)}} \isomorphic \qring{\intnum[t]}{\ideal{h(t), p^e}} \isomorphic \qring{R}{\ideal{p^e}}.
\end{equation*}
Clearly, if \(e=1\), then we have \(\gring(p,d) \isomorphic \field_q\).
We call \(\field_q\) the residue field of \(\gring(p^e,d)\).
To simplify notation, for fixed \(p\) and \(d\), we define \(R_e\coloneqq\gring(p^e,d)\).
In the remainder of this work, we follow the convention of using uppercase letters for polynomials over \(R_1=\field_q\), and lowercase letters to denote polynomials over general rings.

A well-known theorem states that quotient rings decompose into direct products.
Recall that two ideals \(I\) and \(J\) of a ring \(S\) are coprime, if \(I + J = S\), that is, there exists \(i \in I\) and \(j \in J\) such that \(i + j = 1\).
If one of them is principal, for example \(I = fS\) for some \(f\in S\), then for simplicity say that \(f\) is coprime to \(J\) (instead of stating that \(fS\) is coprime to \(J\)).
\begin{theorem}
    \label{the:crt}
    \emph{(Chinese Remainder Theorem)} Let \(S\) be a commutative ring and \(I_1, I_2, \dots, I_n\) be mutually coprime ideals of \(S\).
    If \(I=I_1 I_2 \cdots I_n\), then \(\qring{S}{I}\) is isomorphic to a direct product as follows
    \[\qring{S}{I} \isomorphic \qring{S}{I_1} \times \qring{S}{I_2} \times \dots \times \qring{S}{I_n}.\]
\end{theorem}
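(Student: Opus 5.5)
We argue via the canonical map \(\varphi\colon S \to \qring{S}{I_1} \times \dots \times \qring{S}{I_n}\), \(s \mapsto (s + I_1, \dots, s + I_n)\), which is clearly a ring homomorphism. Our aim is to show that \(\ker\varphi = I\) and that \(\varphi\) is surjective. The first isomorphism theorem then gives the claim. The kernel is \(I_1 \cap \dots \cap I_n\) by definition, so we reduce to two facts: \(I_1 \cap \dots \cap I_n = I_1 I_2 \cdots I_n\), and surjectivity.

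\textbf{Key lemma.} For each \(k\), the ideal \(I_k\) is coprime to \(J_k \coloneqq \prod_{j\neq k} I_j\). For each \(j \neq k\), pick \(a_j \in I_k\) and \(b_j \in I_j\) with \(a_j + b_j = 1\). Expanding \(1 = \prod_{j\neq k}(a_j + b_j)\), every term except \(\prod_{j\ne k} b_j\) contains some factor \(a_j\) and therefore lies in \(I_k\). Hence \(1 \in I_k + J_k\).

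\textbf{Intersection equals product.} We prove \(I_1\cap\dots\cap I_m = I_1\cdots I_m\) by induction on \(m\). The inclusion \(\supseteq\) always holds. For \(\subseteq\), the step reduces to the case of two coprime ideals \(A, B\). By the key lemma (applied to \(I_1,\dots,I_m\)), we can take \(A = I_1\cdots I_{m-1}\), which equals \(I_1 \cap\dots\cap I_{m-1}\) by induction, and \(B = I_m\); these are coprime. Given \(a + b = 1\) with \(a \in A\), \(b \in B\), and any \(x \in A\cap B\), we have \(x = xa + xb\). Here \(xa \in BA\) and \(xb \in AB\), so \(x \in AB\).

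\textbf{Surjectivity.} Using the key lemma, write \(1 = u_k + e_k\) with \(u_k \in I_k\) and \(e_k \in J_k\). Then \(e_k \equiv 1 \pmod{I_k}\), and \(e_k \equiv 0 \pmod{I_j}\) for every \(j\neq k\). Given a target \((s_1+I_1,\dots,s_n+I_n)\), the element \(s=\sum_k s_k e_k\) maps to it. The only mildly delicate step is the coprimality of \(I_k\) with the product of the others, and the expansion argument above handles it.
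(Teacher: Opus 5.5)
Your proof is correct and is essentially the standard textbook argument that the paper cites in place of a proof. It uses the canonical map $S \to \prod_k S/I_k$, coprimality of $I_k$ with the product of the other ideals by expanding $\prod_{j\neq k}(a_j+b_j)$, the identity $\bigcap_k I_k = \prod_k I_k$ via $x = xa + xb$, and the first isomorphism theorem. The only difference is cosmetic: you get surjectivity in one step from the elements $e_k$ with $e_k \equiv 1 \pmod{I_k}$ and $e_k \in I_j$ for $j \neq k$, rather than by inducting from the two-ideal case.
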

The proof of \cref{the:crt} is elementary, see \cite[Part II, Theorem 17]{Dummit2004}, for example.
The following lemmas are useful tools when working with prime-power moduli. The first allows us to lift congruences to higher prime powers, while the second allows us to lift B{\'e}zout identities to higher prime powers.
\begin{lemma}
    \label{lem:power-p}
    Let \(S\) be a commutative ring and \(a,b \in S\). If \(e \ge 1\) and \(a \equiv b \pmod{p^e}\), then \(a^p \equiv b^p \pmod{p^{e+1}}\).
\end{lemma}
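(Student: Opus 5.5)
Since \(a \equiv b \pmod{p^e}\), we write \(a = b + p^e c\) for some \(c \in S\). Here a congruence modulo \(p^e\) in \(S\) means membership in the ideal \(p^e S\), where \(p\) stands for its image \(p\cdot 1_S\) in \(S\). We then expand \(a^p = (b + p^e c)^p\) by the binomial theorem, which is valid in the commutative ring \(S\):
\[
a^p = b^p + \sum_{k=1}^{p} \binom{p}{k} b^{p-k} p^{ek} c^k .
\]
It then suffices to check that every summand with \(k \ge 1\) lies in \(p^{e+1} S\).

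We split the terms into two cases.

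\emph{Case \(1 \le k \le p-1\).} Here the integer \(\binom{p}{k}\) is divisible by \(p\), since \(p\) is prime and divides \(p!\) but not \(k!(p-k)!\). The factor \(p^{ek}\) contributes at least \(p^e\), because \(k \ge 1\). So the summand equals \(p^{e+1}\) times an element of \(S\).

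\emph{Case \(k = p\).} The summand is \(p^{ep} c^p\). Since \(p \ge 2\) and \(e \ge 1\), we have \(ep \ge 2e \ge e+1\), so this term also lies in \(p^{e+1} S\).

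Summing the two cases gives \(a^p - b^p \in p^{e+1} S\), which is the claim.

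The only point requiring any care is the last term \(k=p\), where the binomial coefficient is \(1\) and the factor \(p\) must come instead from the exponent inequality \(ep \ge e+1\). Beyond this, the argument uses no structure of \(S\) other than commutativity. In particular, it does not need torsion-freeness, and it does not need divisibility by \(p\) to be meaningful in \(S\) beyond ideal membership.
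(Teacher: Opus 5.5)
Your proof is correct and is essentially the paper's approach: the paper just cites the standard binomial-expansion argument in \(\mathbb{Z}\) (Ireland--Rosen) and notes that it extends to any commutative ring. Your writeup carries out that extension explicitly, using the divisibility of \(\binom{p}{k}\) by \(p\) for \(1 \le k \le p-1\) and the inequality \(ep \ge e+1\) for the last term.
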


\begin{proof}
See, for example, \cite[p.42]{Ireland1990} for a proof in the ring of integers. The argument extends straightforwardly to any commutative ring \(S\).
\end{proof}

\begin{lemma}
	\label{lem:lifting-Bezout}
Let \(S\) be a commutative ring, \(p\) a prime number, and \(i \geq 1\). Let \(a,b \in S\). If there exist \(x,y \in S\) such that \(xa - yb \equiv 1 \pmod{p^i}\), then there exist \(x',y' \in S\) such that \(x'a - y'b \equiv 1 \pmod{p^{i+1}}\). In particular, if \(a\) and \(b\) are coprime modulo \(p\), they are coprime modulo \(p^e\), \(\forall e\geq 1\).
\end{lemma}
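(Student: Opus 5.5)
The natural route is a squaring trick, in the same spirit as \cref{lem:power-p}. Suppose \(xa - yb \equiv 1 \pmod{p^i}\), so that \(xa - yb = 1 + p^i c\) for some \(c \in S\). Write \(u \coloneqq xa - yb\), so \(u - 1 = p^i c\). We want an element of the ideal \(aS + bS\) that is congruent to \(1\) modulo \(p^{i+1}\). Two candidates suggest themselves.

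The first candidate uses \(u(2-u) = 1 - (u-1)^2\). This equals \(1 - p^{2i}c^2\), and since \(2i \ge i+1\) for \(i \ge 1\), it is congruent to \(1\) modulo \(p^{i+1}\). Moreover \(u(2-u) = (xa - yb)(2-u)\), so we may take \(x' = x(2-u)\) and \(y' = y(2-u)\). These satisfy \(x'a - y'b = u(2-u) \equiv 1 \pmod{p^{i+1}}\). This is a one-line verification and needs no hypothesis on \(p\) beyond \(i\ge1\).

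The second candidate would be \(u^{p^{k}}\) via \cref{lem:power-p}. That approach is clumsier, because \(u^p\) is not of the form \(x'a - y'b\) unless we expand it. The expansion is fine, since every term of \((xa-yb)^p\) lies in \(aS+bS\), but the first route is cleaner, so I will use it.

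For the "in particular" statement, suppose \(a\) and \(b\) are coprime modulo \(p\). Reading this as the existence of \(x,y\) with \(xa - yb \equiv 1 \pmod p\) (coprimality of the ideals \(aS + pS\) and \(bS+pS\), or of \(\ideal{a}+\ideal{b}\) in \(S/pS\); these readings agree), we induct on \(e\) using the first part. This gives \(x_e,y_e\) with \(x_e a - y_e b\equiv 1 \pmod{p^e}\) for every \(e \ge 1\), which is exactly coprimality modulo \(p^e\).

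The only potential obstacle is pinning down what "coprime modulo \(p\)" means in the paper's convention. I expect the main case to be \(S = \pring{R}\) with \(a,b\) polynomials whose images in \(\pring{\field_q}\) are coprime. There, coprimality in the PID \(\pring{\field_q}\) gives the Bézout identity modulo \(p\) directly, so the argument above applies verbatim.
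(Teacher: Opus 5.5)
Your proof is correct, but it takes a different route from the paper's proof of this lemma. The paper follows the pattern of \cref{lem:power-p}. Writing \(xa = yb+1+zp^i\), it raises both sides to the \(p\)-th power to get \((xa)^p \equiv (yb+1)^p \pmod{p^{i+1}}\), expands \((yb+1)^p = 1 + y'b\), and takes \(x' = x(xa)^{p-1}\). This is essentially your ``second candidate'', applied to \(xa \equiv 1+yb\) rather than to \(u \equiv 1\). You instead multiply by the correction factor \(2-u = 1-p^ic\).

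Your route is shorter and needs no binomial expansion. It uses nothing about \(p\): it works verbatim with \(p^iS\) replaced by \(I^i\) for any ideal \(I\) of \(S\). It also actually yields \(x'a-y'b \equiv 1 \pmod{p^{2i}}\), a quadratic lift rather than a one-step one. The paper's route gains exactly one power of \(p\) per step, because it relies on the exponent being \(p\) to kill the linear term \(p(yb+1)^{p-1}zp^i\) modulo \(p^{i+1}\). What it buys is mainly uniformity with \cref{lem:power-p}. Incidentally, the paper itself uses your trick in the last step of the proof of \cref{the:general-hensel}, where it sets \(\alpha' = (1-up^i)\alpha\) and \(\beta' = (1-up^i)\beta\).

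Your induction for the ``in particular'' clause, which the paper leaves implicit, is fine. So is your remark that the possible readings of ``coprime modulo \(p\)'' agree.
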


\begin{proof}
    Let \(z\in S\) such that \(xa=yb+1+zp^i\).
    We have that \( (xa)^p = (yb+1+zp^i)^p \equiv (yb+1)^p +p\cdot (yb+1)^{p-1}\cdot zp^i \pmod{p^{i+1}} \).
    Expanding \( (yb+1)^p \), we can write it in the form \(y'b+1\) for some \(y'\in S\).
    Thus, \(x'a-y'b\equiv 1 \pmod{p^{i+1}}\) where \(x'=x\cdot (xa)^{p-1}\)
\end{proof}

\paragraph{Canonical Homomorphism}
Based on the structure of \glspl{gr} defined by prime-power modular arithmetic, homomorphisms between them naturally emerge by reducing the power of the prime.
Similarly, there exists a canonical homomorphism from \(R\) to \(R_e\) by modular reduction with \(p^e\).
\begin{definition}
    \emph{(Canonical Homomorphism)}
    For any non-negative integer  \(e\), we define the map
        \[
            \pi_e:
                \begin{cases}
                    R &\rightarrow R_e\\
                    z &\mapsto z \pmod{p^e}.
                \end{cases}
        \]
    The map \(\pi_e\) is commonly referred to as the canonical (or natural) homomorphism from \(R\) to \(R_e\).
\end{definition}
We note that \(\pi_e\) naturally extends to polynomial rings and modules over \(R\), and we use the same symbol \(\pi_e\) to denote the corresponding homomorphisms in these cases.
Moreover, we use the shorthand notation \(\bar{z} \coloneqq \pi_1(z) \in \field_q\) to denote the homomorphism between \(R\) and its residue field of characteristic \(p\).
This notation also extends to polynomials.

\paragraph{Factorization of Polynomials}
A well-known result is that factorizations of polynomials modulo an element \(m\) in a commutative ring with identity can be lifted to factorizations modulo \(m^k\) for any \(k > 0\).
\begin{lemma}
    \label{lem:hensel}
    \emph{(Hensel's Lifting Lemma)}
    \cite[Theorem 15.12 and Algorithm 15.10]{Gathen1999}
    Let \(S\) be a commutative ring with identity, \(m \in S\), and \(f,g,h,s,t \in \pring{S}\), where \(h\) is monic and the leading coefficient of \(f\) is not a zero-divisor
    modulo \(m\), satisfying \(\deg(f) = n = \deg(g) + \deg(h)\), \(\deg(s) < \deg(h)\), and \(\deg(t) < \deg(g)\),
    such that
    \[f \equiv g\cdot h \pmod{m} \text{ and } s\cdot g + t\cdot h \equiv 1 \pmod{m}.\]
    Then for any \(k \in \intnum^+\), there exist polynomials \(g',h',s',t' \in \pring{S}\), \(h'\) monic,
    satisfying \(g' \equiv g \pmod{m}\), \(h' \equiv h \pmod{m}\), \(s' \equiv s \pmod{m}\), \(t' \equiv t \pmod{m}\), \(\deg(g') = \deg(g)\), \(\deg(h') = \deg(h)\),
    \(\deg(s') < \deg(h')\) and \(\deg(t') < \deg(g')\),
    such that
    \[f \equiv g'\cdot h' \pmod{m^k} \text{ and } s'\cdot g' + t' \cdot h' \equiv 1 \pmod{m^k}.\]
\end{lemma}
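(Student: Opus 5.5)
The plan is the standard quadratic Hensel step from \cite{Gathen1999}, iterated. The core claim is: if the hypotheses hold modulo \(m\), then polynomials with the same properties exist modulo \(m^2\). Iterating gives every \(m^{2^j}\), and since \(m^k \mid m^{2^j}\) for \(2^j \ge k\), reduction yields the statement for any \(k\). Congruence modulo \(m\) is preserved at every stage because each update is a multiple of \(m\) (or of \(m^{2^j}\), which is divisible by \(m\)). So it suffices to do one step from modulus \(m\) to modulus \(m^2\).

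\emph{Factor update.} Write \(e = f - gh\), so \(e \equiv 0 \pmod m\). Divide \(se\) by the monic \(h\) to get \(se = qh + r\) with \(\deg r < \deg h\); this needs only that \(h\) is monic. Set
\[
h^* = h + r, \qquad g^* = g + te + qg .
\]
Expanding, \(f - g^*h^* \equiv e(1 - sg - th) \equiv 0 \pmod{m^2}\), after discarding terms that are products of two multiples of \(m\). Here \(q\) and \(r\) are multiples of \(m\): both are obtained by division by a monic polynomial, and \(se \equiv 0\). Since \(\deg r < \deg h\), \(h^*\) is monic of degree \(\deg h\).

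\emph{Degree of \(g^*\).} This is the main obstacle: \(g^*\) must have degree exactly \(\deg g\). Using the degree bounds on \(s\) and \(t\), the possibly high-degree terms can be collected as \(g^* h^* = f + (\text{multiple of } m^2)\). I would first truncate \(g^*\) modulo \(m^2\), discarding high-degree coefficients that vanish modulo \(m^2\). Comparing leading coefficients then shows that \(\operatorname{lc}(g^*)\) equals \(\operatorname{lc}(f)\) modulo \(m\); this uses that \(h^*\) is monic. Because \(\operatorname{lc}(f)\) is not a zero-divisor modulo \(m\), the degree of \(g^*\) cannot drop below \(n - \deg h\). If that argument proves awkward, the fallback is to follow von zur Gathen–Gerhard's formulation literally and check \(\deg(g^*) = \deg(g)\) coefficientwise.

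\emph{Bézout update.} Let \(b = sg^* + th^* - 1 \equiv 0 \pmod m\). Divide \(sb\) by \(h^*\) to get \(sb = ch^* + d\), and set
\[
s^* = s - d, \qquad t^* = t - tb - cg^* .
\]
Then \(s^*g^* + t^*h^* - 1 \equiv -b^2 \equiv 0 \pmod{m^2}\), and \(\deg s^* < \deg h^*\) holds by construction. For \(\deg t^* < \deg g^*\), compare degrees in \(t^*h^* = 1 - s^*g^* + O(m^2)\): the right side has degree less than \(\deg g^* + \deg h^*\), and \(h^*\) is monic, so after reducing modulo \(m^2\) the bound follows.
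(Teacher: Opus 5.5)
Your proposal is correct and follows essentially the route the paper relies on: the paper gives no proof of its own and defers to von zur Gathen--Gerhard's quadratic Hensel step (Algorithm 15.10), iterated from \(m\) to \(m^{2^j}\), which is exactly your factor and B\'ezout updates. One small point when iterating: \(\operatorname{lc}(f)\) being a non-zero-divisor modulo \(m\) need not persist modulo \(m^2\) in a general ring, but your degree argument only uses \(\operatorname{lc}(f)\notin mS\), which holds at every stage, so nothing breaks.
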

The proof of \cref{lem:hensel} is elementary, and we skip the details here.
For details and applications, see, for example, \cite[pp. 433-472]{Gathen1999}.

In general, polynomial rings over \Glspl{gr} are not unique factorization domains because they contain zero divisors.
Nevertheless, by \cref{lem:hensel}, certain polynomials defined over \glspl{gr} admit unique factorization.
Recall that \(R_e:= R/\ideal{p^e}\) is a finite local ring whose maximal ideal is \(\ideal{p}\), the set of all multiples of \(p\). In particular, $R_1=\mathbb{F}_q$ is the finite field with $q=p^d$ elements.
Moreover, the units of \(R_e\) are precisely the elements of \(R_e \setminus \ideal{p}\). A non-zero polynomial \(f(x) = \sum_{i=0}^r a_i x^i \in \pring{R_e}\) is called {\it regular} if it is not a zero divisor. The following facts hold, and their proofs can be found in \cite[Theorem 2.1]{Salagean2005}.
\begin{itemize}
    \item \(f\) is a zero-divisor if and only if \(a_i \in \ideal{p}\) for \(i = 0, \dots, r\);
    \item \(f\) is regular if and only if \(a_i \in R_e\setminus\ideal{p}\) for some \(i\), \(0 \le i \le r\);
    \item \(f\) is a unit if and only if \(a_0 \in R_e\setminus\ideal{p}\) and \(a_i \in \ideal{p}\) for \(i=1,\cdots,r\).
\end{itemize}
If \(f\) is regular, then there are polynomials \(u, f_0 \in \pring{R_e}\) with \(u\) being a unit and \(f_0\) monic, such that:
\begin{equation}
    \label{eq:regular-poly}
    f = u\cdot f_0.
\end{equation}

Consider $F \in \pring{\field_q}$ such that \(F \equiv f \pmod{p}\). If \(f\) is both regular and non-unit, then \(f\) is primary if and only if \(F= u\cdot G^m\) where $u\in \field_q^*$ and $G \in \pring{\field_q}$ is an irreducible polynomial \cite[Theorem 2.2]{Salagean2005}. In particular, if \(f\) is a non-constant monic polynomial, then \(f\) is primary if and only if \(F\) is a power of an irreducible polynomial.

Any non-constant monic polynomial \(f \in \pring{R_e}\) can be factored uniquely into regular primary and mutually coprime polynomials \cite[Theorem 2.3]{Salagean2005}.
Let \(F = F_1^{m_1} \cdots F_r^{m_r}\), where \(F_i \in \pring{\field_q}\) are monic irreducible polynomial that are pairwise coprime, and \(m_i \in \intnum^+\) for \(i=1,\dots,r\).
Using Hensel lifting, this factorization over \(\field_q = R_1\) can be lifted to $R_e$, yielding a factorization \(f=f_1 \cdots f_r\), where \(F_i^{m_i} \equiv f_i \pmod{p}\). Hence, the polynomials \(f_i\) are primary. Moreover, the polynomials \(f_i\) are mutually coprime in the sense that \(f_i\pring{R_e} + f_j\pring{R_e} = \pring{R_e}\) for any \(1 \le i< j \le r\). We observe, however, the individual factors \(f_i\) are not necessarily irreducible.


\subsection{Finite Dynamical Systems}
\label{subsec:fds}

\Glspl{fds} are typically defined as tuples \(\sys(\set{X}, \Gamma)\), where \(\set{X}\) is a finite set forming the state space,
and \(\Gamma: \set{X} \rightarrow \set{X}\) is the system function that defines how the system transitions from one state to the next \cite{Toledo2005}.
Hence, \(\Gamma\) defines the overall dynamical behavior of the system.
Two \glspl{fds} \((\set{X}, \Gamma_1)\) and \((\set{Y}, \Gamma_2)\) are equivalent, denoted by \((\set{X}, \Gamma_1) \isomorphic (\set{Y}, \Gamma_2)\), if
there is a bijection \(\varphi\colon\set{X}\rightarrow\set{Y}\) such that \(\Gamma_2 = \varphi\circ\Gamma_1\circ\varphi^{-1}\).

Starting at any state \(x \in \set{X}\), the repeated application of \(\Gamma\) to \(x\) defines a path through the state space, called the orbit of \(x\).
We denote the \(k\)-times repeated application of \(\Gamma\) to \(x\) as \(\Gamma^{k}(x)\).
Since the state space is finite, any orbit eventually stabilizes in a periodic pattern.
This leads us to the definition of transients and cycles in the orbits of an \gls{fds}.
\begin{definition}
    \emph{(Transients and Cycles)}
    For any \(x \in \set{X}\), let \(\tau\) be the least non-negative integer and \(\ell\) be the least positive integer satisfying 
    \(\Gamma^{\tau+\ell}(x) = \Gamma^{\tau}(x).\)
    Then the set \(\{x, \Gamma(x), \dots, \Gamma^{\tau-1}(x)\}\) is the transient in the orbit of \(x\),
    and \(\pper(\Gamma, x) \coloneqq \tau\) the transient length (or preperiod).
    Moreover, the set \(\{\Gamma^{\tau}(x), \Gamma^{\tau+1}(x), \dots, \Gamma^{\tau+\ell-1}(x)\}\) is the cycle in the orbit of \(x\),
    and \(\per(\Gamma, x) \coloneqq \ell\) the cycle length (or period).
\end{definition}
Fundamental operations in the context of \glspl{fds} are sums and products \cite[Definition 2]{Toledo2005}.
\begin{definition}
    \label{def:notation-sum-product}
    \emph{(Sum and Product of \glspl{fds})}
    Let \(\sys(\set{X},\Gamma_1)\) and \(\sys(\set{Y},\Gamma_2)\) be two \glspl{fds}.
    The sum of the two systems is given by
    \(\sys(\set{X}, \Gamma_1) \oplus \sys{\set{Y}, \Gamma_2} = \sys(\set{X} \lor \set{Y}, \Gamma_1 \lor \Gamma_2),\)
    where \(\set{X} \lor \set{Y}\) denotes the disjoint union of \(\set{X}\) and \(\set{Y}\),
    and \(\Gamma_1 \lor \Gamma_2: \set{X} \lor \set{Y} \rightarrow \set{X} \lor \set{Y}\) is given by
    \[(\Gamma_1 \lor \Gamma_2)(z) \coloneqq
        \begin{cases}
            \Gamma_1(z) &\text{if } z \in \set{X},\\
            \Gamma_2(z) &\text{else}.
        \end{cases}\]
    In addition, if \(n\in\intnum^+\), then we write
    \[n \cdot \sys(\set{X},\Gamma) \coloneqq \underbrace{\sys(\set{X},\Gamma) \oplus \dots \oplus \sys(\set{X}, \Gamma)}_{n\text{ summands}}.\]
    The sum of a series of systems is denoted by \(\bigoplus\).
    
    The (tensor) product of the two \glspl{fds} is given by
    \(\sys(\set{X},\Gamma_1) \otimes \sys(\set{Y},\Gamma_2) = \sys(\set{X}\times\set{Y}, \Gamma_1 \times \Gamma_2),\)
    where \(\set{X}\times\set{Y}\) denotes the Cartesian product of \(\set{X}\) and \(\set{Y}\),
    and \((\Gamma_1 \times\Gamma_2)(x,y) = (\Gamma_1(x),\Gamma_2(y))\) for any \(x\in\set{X}, y\in\set{Y}\).
    The product of a series of systems is denoted by \(\bigotimes\).
\end{definition}
An important observation is that \(\oplus\) and \(\otimes\) obey associativity and distributivity, and that \(\oplus\) is commutative.
However, it is clear that \(\otimes\) is not commutative since the Cartesian product already is not.
Nevertheless, there is a straightforward isomorphism between \(\set{X} \times \set{Y}\) and \(\set{Y}\times\set{X}\) such that
\(\sys(\set{X}, \Gamma_1) \otimes \sys(\set{Y},\Gamma_2) \isomorphic \sys(\set{Y},\Gamma_2) \otimes \sys(\set{X},\Gamma_1).\)
Therefore, the product of systems is unique up to isomorphism.

The dynamics of an \gls{fds} \(\sys(\set{X},\Gamma)\) can be represented as a graph \cite{Toledo2005} in which the vertices are the elements in \(\set{X}\),
and for any pair of states \(x_1, x_2 \in \set{X}\), there is a directed edge from \(x_1\) to \(x_2\) if and only if \(\Gamma(x_1) = x_2\).
More precisely, we define such graphs as follows.
\begin{definition}
    \emph{(Functional Graph)}
    To any \gls{fds} \(\sys(\set{X},\Gamma)\) we associate a graph \(\fgraph(\Gamma) = (\set{V},\set{E})\), where \(\set{V} = \set{X}\) is the set of vertices,
    and \(\set{E} = \mleft\{ (x,\Gamma(x)) : x \in \set{V}\mright\}\) is the set of directed edges.
    We call \(\fgraph(\Gamma)\) the functional graph of \(\sys(\set{X},\Gamma)\).
\end{definition}
The functional graph of an \gls{fds} is a complete characterization of the structure of its state space and dynamical behavior, and can thus be used to
identify the corresponding \gls{fds}.
Equivalent \glspl{fds} correspond to isomorphic functional graphs, and
the sums and products of systems translate to sums and products of their functional graphs \cite{Toledo2005,Qureshi2019}.
Therefore, we reuse the same notation as introduced in \cref{def:notation-sum-product} for functional graphs.
There are two basic building blocks in the functional graphs of \glspl{fds}, namely cycles and trees.
Cycles in graphs correspond to the cycles in the orbits of \glspl{fds}.
\begin{definition}
    \emph{(Cycles)}
    For any \(\ell\in\intnum^+\), a cycle is a directed and weakly connected graph with \(\ell\) vertices and \(\ell\) edges such that the indegree and outdegree of every vertex are both equal to one.
    We denote such a cycle by \(\mathcal{C}_\ell\), where \(\ell\) is the length of the cycle.
\end{definition}
Cycles of length \(\ell=1\) are called fixed points.
In what follows, \(\gcd\) and \(\lcm\) denote the greatest common divisor and the least common multiple, respectively.
\begin{proposition}
    \label{pro:prod-cycles}
    \cite{Elspas1959}, \cite[Proposition 3.1]{Toledo2005}
    The product of \((a\cdot\mathcal{C}_i)\) by \((b\cdot\mathcal{C}_j)\) is given by
    \[(a\cdot\mathcal{C}_i) \otimes (b\cdot\mathcal{C}_j) \isomorphic (a \cdot b \cdot \gcd(i,j)) \cdot \mathcal{C}_{\lcm(i,j)}.\]
\end{proposition}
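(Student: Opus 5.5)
By distributivity of \(\otimes\) over \(\oplus\), the product \((a\cdot\mathcal{C}_i)\otimes(b\cdot\mathcal{C}_j)\) is the sum of \(a\cdot b\) copies of \(\mathcal{C}_i\otimes\mathcal{C}_j\). It therefore suffices to show that
\[\mathcal{C}_i\otimes\mathcal{C}_j \isomorphic \gcd(i,j)\cdot\mathcal{C}_{\lcm(i,j)}.\]

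To do this, model \(\mathcal{C}_i\) concretely as the system \((\intnum_i,\ x\mapsto x+1)\), and similarly \(\mathcal{C}_j\). The product then becomes the system on \(\intnum_i\times\intnum_j\) given by \((x,y)\mapsto(x+1,y+1)\).

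This map is a bijection, so every vertex lies on a cycle and the functional graph is a disjoint union of cycles. The \(k\)-th iterate sends \((x,y)\) to \((x+k,y+k)\). This equals \((x,y)\) exactly when \(i\mid k\) and \(j\mid k\), that is, when \(\lcm(i,j)\mid k\). Hence every orbit has length exactly \(\lcm(i,j)\).

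Counting vertices, there are \(ij\) states, and \(ij/\lcm(i,j)=\gcd(i,j)\). So the graph consists of exactly \(\gcd(i,j)\) cycles, each of length \(\lcm(i,j)\).

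Finally, because isomorphism classes of functional graphs are preserved under \(\oplus\) and \(\otimes\), the claim for general multiplicities \(a,b\) follows from the single-cycle case.

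No step is a real obstacle. The only point needing care is checking that distributivity holds up to isomorphism, i.e. that \(\bigl(\bigvee_s X_s\bigr)\times\bigl(\bigvee_t Y_t\bigr)\) is the disjoint union of the sets \(X_s\times Y_t\), with the product map acting componentwise. This is immediate from the definitions.
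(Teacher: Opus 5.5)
Your proof is correct and complete. Distributivity reduces the claim to \(\mathcal{C}_i\otimes\mathcal{C}_j\); in the model \((\intnum_i\times\intnum_j,\,(x,y)\mapsto(x+1,y+1))\) every point has exact period \(\lcm(i,j)\), and counting gives \(ij/\lcm(i,j)=\gcd(i,j)\) cycles. The paper gives no proof of its own, only a citation to Elspas and Toledo, and your argument is the standard one for this fact. Your check of distributivity at the level of sets and maps fills in what the paper takes as known.
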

On the other hand, trees correspond to the transient parts in the orbits of \glspl{fds}.
\begin{definition}
    \emph{(Trees)}
    A tree is a directed, weakly connected, acyclic graph with exactly one vertex with no successors, referred to as the root.
    Vertices with no predecessors are leaves.
    A tree is directed from its leaves to the root.
    From each vertex in the tree, there exists exactly one path to the root.
    The height of a tree is the maximum number of edges on the path from a leaf to the root.
    A tree of height zero is trivial and consists of the root and no other vertices.
\end{definition}

It has been shown that the functional graph of an \gls{fds} is the sum of weakly connected components,
where each component consists of a cycle whose vertices are roots of (possibly trivial) trees \cite{Toledo2005}.
A notable form of such components is given by a single tree rooted at a fixed point, which we refer to as a looped tree.
In the remainder of this work, we denote a looped tree as \(\mathring{\mathcal{T}}\), and the corresponding ordinary tree (without the loop at the root) as \(\mathcal{T}\).
The product of a looped tree of height \(i\) by a looped tree of height \(j\) results in a looped tree of height \(\max(i,j)\) \cite[Proposition 3.2]{Toledo2005}.
Moreover, the product of a looped tree \(\mathring{\mathcal{T}}\) by a cycle \(C_\ell\) results in a cycle of length \(\ell\), of which each vertex
is the root of a tree isomorphic to \(\mathcal{T}\) (when ignoring the edges of the cycle) \cite[Proposition 3.4]{Toledo2005}.

\begin{definition}
    \emph{(System Height)} The height of an \gls{fds} is the maximum height of its trees.
\end{definition}

An important concept in the analysis of a system's dynamics is the set of cycle lengths that appear in the functional graph of an \gls{fds} \cite{Deng2015}.
\begin{definition}
    \emph{(Set of Cycle Lengths)}
    For any \gls{fds} \(\sys(\set{X}, \Gamma)\), we define \(\cset(\Gamma)\) to be the set of the lengths of cycles that appear in \(\fgraph(\Gamma)\).
    More formally, \(\cset(\Gamma) = \bigl\{ \per(\Gamma, x) : x \in \set{X}\bigr\}.\)
    Furthermore, if \(\Gamma\) is a ring of characteristic \(p\), then we define
    \(\csetp(\Gamma) = \{\ell : \ell \in \cset(\Gamma), p \nmid \ell\}.\)
\end{definition}

\subsubsection{\texorpdfstring{\Acrlongpl{lfds}}{Linear Finite Dynamical Systems}}
\label{sssec:lfds}
Recall that \(R = \qring{\intnum[t]}{\ideal{h}}\), where \(h \in \intnum[t]\) is a fixed polynomial of degree \(d\geq 1\) that is irreducible modulo a fixed prime \(p\), \(R_e = \qring{R}{\ideal{p^e}}\) and \(\pi_e:R \to R_e\) is the canonical projection map.
In this work, we consider the case \(\mathcal{X}=R_e^n\), \(n\in\intnum^+\), where the system function is linear over \(R_e^n\).
We refer to such systems as \glspl{lfds}.
Let \(\mat{A} = (a_{i,j})_{1 \le i,j \le n}\), \(a_{i,j} \in R\).
Moreover, let us define \(\pi_e(\mat{A}) \coloneqq (\pi_e(a_{i,j}))_{1 \le i,j \le n}\), and \(\bmat{A} \coloneqq \pi_1(\mat{A})\).
Then the system function of an \(\gls{lfds}\) \((R_e^n, \Gamma_e(\mat{A}))\) can be defined in terms of \(\mat{A}\):
\[
    \Gamma_e(\mat{A}):
        \begin{cases}
            R_e^n &\rightarrow R_e^n\\
            \vec{x} &\mapsto \pi_e(\mat{A})\vec{x}.
        \end{cases}
\]
We refer to \(\mat{A}\) as the system matrix.

As we show in this section, the cycle lengths that appear in the dynamics of \glspl{lfds} can be obtained via the order of certain associated polynomials.
\begin{definition}
    \label{def:order}
    \emph{(Generalized Order)}
    Let \(S\) be a finite commutative ring with identity and \(f, g \in \pring{S}\) be two polynomials with \(f\pring{S} + g\pring{S} = \pring{S}\).
    Then we define the polynomial order \(\order(f,g)\) to be the smallest positive integer \(k\) satisfying \(f^k \equiv 1 \pmod{g}\). 
    In case \(S = R\), we additionally define \(\order_e(f,g) \coloneqq \order(\pi_e(f),\pi_e(g))\).
\end{definition}
\begin{lemma}
    \label{lem:order-lcm-fq}
    Let \(f,g \in \pring{R}\) such that \(\gcd(\bar{f}, \bar{g}) = 1\).
    Furthermore, let \(\bar{g}(x) = G_1(x) \cdots G_r(x)\) such that the \(G_i \in \pring{\field_q}\), \(1 \le i \le r\), are mutually coprime.
    Then \(\order_1(f,g) = \lcm(\order(\bar{f},G_1), \allowbreak \dots, \allowbreak \order(\bar{f},G_r))\).
\end{lemma}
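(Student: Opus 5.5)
By definition, \(\order_1(f,g)=\order(\bar{f},\bar{g})\), so the claim lives entirely in \(\pring{\field_q}\). There, \(\bar{f}^k \equiv 1 \pmod{\bar{g}}\) means \(\bar{g}\mid \bar{f}^k-1\). I would first check that every order in the statement is well defined. Since \(\gcd(\bar{f},\bar{g})=1\), we also have \(\gcd(\bar{f},G_i)=1\) for each \(i\), because each \(G_i\) divides \(\bar{g}\). So \(\bar{f}\) is a unit in the finite rings \(\pring{\field_q}/\ideal{\bar{g}}\) and \(\pring{\field_q}/\ideal{G_i}\). Its multiplicative order there is therefore finite and positive, and this order is exactly \(\order(\bar{f},\bar{g})\), respectively \(\order(\bar{f},G_i)\). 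The coprimality hypothesis required by \cref{def:order} holds because \(\pring{\field_q}\) is a Euclidean domain, so \(\gcd=1\) gives a B\'ezout identity.

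The key observation is the standard one for orders in groups. For a unit \(u\) of order \(o\), we have \(u^k=1\) if and only if \(o\mid k\). Applied here, this says \(\bar{f}^k\equiv 1 \pmod{G_i}\) if and only if \(\order(\bar{f},G_i)\mid k\), and similarly for \(\bar{g}\). It therefore suffices to show that, for every \(k\geq 1\),
\[
\bar{g}\mid \bar{f}^k-1 \iff G_i \mid \bar{f}^k-1 \text{ for all } i=1,\dots,r.
\]
The forward direction is immediate since each \(G_i\) divides \(\bar{g}\). The backward direction uses mutual coprimality. In the UFD \(\pring{\field_q}\), pairwise coprime divisors of a polynomial have product dividing it. Alternatively, one can invoke \cref{the:crt}: the ideals \(\ideal{G_i}\) are mutually coprime, so \(\ideal{G_1}\cap\cdots\cap\ideal{G_r}=\ideal{G_1\cdots G_r}=\ideal{\bar{g}}\). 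Here I would use the fact that for pairwise coprime ideals the intersection equals the product, which is the injectivity part of the CRT map.

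Combining these, the set of \(k\geq 1\) with \(\bar{f}^k\equiv1\pmod{\bar{g}}\) equals the set of common multiples of the \(\order(\bar{f},G_i)\). Its least element is therefore \(\lcm(\order(\bar{f},G_1),\dots,\order(\bar{f},G_r))\), which proves the lemma. I do not expect a real obstacle. The only point requiring care is justifying that the product of the pairwise coprime \(G_i\) divides \(\bar{f}^k-1\), and this follows at once from the CRT or from unique factorization in \(\pring{\field_q}\).
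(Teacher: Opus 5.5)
Your proof is correct. The paper omits this argument as ``elementary,'' and yours is the standard one it presumably has in mind: pass to \(\pring{\field_q}\), use that \(\bar{f}^k\equiv 1\) modulo a polynomial if and only if the corresponding order divides \(k\), and use the Chinese Remainder Theorem (or unique factorization) to see that \(\bar{g}\mid \bar{f}^k-1\) exactly when every \(G_i\) does. Your tacit assumption \(\bar{g}\neq 0\), needed for the quotient rings to be finite, is harmless, since the order-divides-\(k\) step only needs \(\bar{f}\) to have finite multiplicative order.
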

The proof of \cref{lem:order-lcm-fq} is elementary, and we skip the details here.
\begin{theorem}
    \label{the:order-x-pow}
    \cite[Theorem 3.8]{Lidl1996}
    Let \(g \in \pring{R}\) such that \(\bar{g}\) is irreducible and \(\gcd(x, \bar{g}) = 1\).
    Then for any \(k\in\intnum^+\), \(\order_1(x,g^k) = p^\tau \cdot \order_1(x,g)\), where \(\tau\) is the smallest integer satisfying \(p^\tau \ge k\).
\end{theorem}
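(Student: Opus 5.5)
We work entirely in \(\field_q[x]\): write \(G \coloneqq \bar{g}\) (irreducible, \(G \nmid x\)) and \(e \coloneqq \order_1(x,g) = \order(x,G)\), the order of \(x\) in \((\field_q[x]/\ideal{G})^*\). Since \(x\) is coprime to \(G\), it is coprime to \(G^k\), so \(x\) is a unit modulo \(G^k\) and \(\order(x,G^k)\) is well defined. The plan is to establish two divisibilities, \(e \mid \order(x,G^k)\) and \(\order(x,G^k) \mid p^\tau e\), and then to determine the exact \(p\)-part.

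\textbf{Step 1 (upper bound).} Write \(x^e = 1 + G\,H\) in \(\field_q[x]\). Since the characteristic is \(p\), the Frobenius identity gives
\[
x^{e p^\tau} = 1 + G^{p^\tau} H^{p^\tau} \equiv 1 \pmod{G^k},
\]
because \(p^\tau \ge k\). Hence \(\order(x,G^k) \mid p^\tau e\).

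\textbf{Step 2 (lower bound).} If \(x^m \equiv 1 \pmod{G^k}\), then also \(x^m \equiv 1 \pmod G\), so \(e \mid \order(x,G^k)\). Combined with Step 1, \(\order(x,G^k) = p^s e\) for some \(0 \le s \le \tau\).

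\textbf{Step 3 (the main obstacle: \(s=\tau\)).} Here we need the fact that \(p \nmid e\). This holds because \(e\) divides \(|(\field_q[x]/\ideal{G})^*| = q^{\deg G}-1\), which is coprime to \(p\).

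Now suppose \(s < \tau\), so \(p^s \le p^{\tau-1} < k\) by minimality of \(\tau\). Compute
\[
x^{e p^s} = 1 + G^{p^s} H^{p^s}.
\]
We claim \(G \nmid H\). Differentiating \(x^e = 1 + GH\) gives \(e x^{e-1} = G'H + GH'\). If \(G \mid H\), then \(G \mid e x^{e-1}\). But \(e\) is nonzero in \(\field_q\) (as \(p \nmid e\)) and \(G \nmid x\), so this is a contradiction.

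Hence the exact power of \(G\) dividing \(x^{ep^s} - 1\) is \(G^{p^s}\), with \(p^s < k\). Therefore \(x^{ep^s} \not\equiv 1 \pmod{G^k}\), contradicting the definition of \(s\). It follows that \(s = \tau\), which gives \(\order_1(x,g^k) = p^\tau \order_1(x,g)\).

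The separability argument in Step 3 (showing \(G \nmid H\)) is the delicate point; everything else is routine Frobenius bookkeeping.
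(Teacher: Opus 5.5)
Your proof is correct and is essentially the standard argument of the cited source \cite{Lidl1996}, which the paper quotes rather than reproves: you sandwich the order between $\order(x,G)$ and $p^\tau\order(x,G)$, then use that $x^{\order(x,G)}-1$ is squarefree (because $p\nmid\order(x,G)$) to pin down the exact power of $G$ in $x^{\order(x,G)p^s}-1$. It also matches the structure of the paper's own proof of the generalization \cref{the:order-pow} (write $F^\ell=G^sH+1$ with $G\nmid H$, raise to $p$-th powers, exclude the smaller candidate), specialized to $F=x$, where your derivative computation supplies the needed fact that $s=1$.
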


\paragraph{\texorpdfstring{\Glspl{lfds}}{LFDSs} over Galois fields}
The theory of \glspl{lfds} has been discussed in depth in \cite{Elspas1959,Toledo2005}.
In \cite{Elspas1959}, the author considers linear sequential networks defined over prime fields as one form of \glspl{lfds}.
It is shown that the cyclic behavior of an \gls{lfds} over a prime field can be analyzed in terms of a factorization of the characteristic
polynomial of \(\bmat{A}\) into elementary divisors.
In \cite{Toledo2005}, the results of \cite{Elspas1959} have been generalized for \glspl{lfds} over extension fields and embedded in a framework for
analyzing the dynamics of general \glspl{lfds} in terms of a decomposition into sums and products of looped trees and cycles.
In \cite{Stevens1999}, the author uses the same idea to decompose linear \glspl{ca}, another form of \glspl{lfds}, into cycles and trees.

A brief outline of the typical process for analyzing the dynamics of general \glspl{lfds} \(\sys(\field_q^n, \Gamma_1(\mat{A}))\) is as follows \cite{Elspas1959,Stevens1999,Toledo2005,Deng2015,Wei2016}.
First, the invariant factors of \(\bmat{A}\) are computed based on the \gls{snf} of \(\bmat{A}-x\mat{I}\) over \(\field_q\), where \(\mat{I}\) is the identity matrix.
Via the \gls{snf} we obtain a diagonal matrix \(\operatorname{diag}(1,\dots,1,\Lambda_1, \dots, \Lambda_r)\), where \(\Lambda_i \in \pring{\field_q}\), \(i=1,\dots, r\),
such that \(\Lambda_j(x) \mid \Lambda_k(x)\) for each \(j \le k\).
The polynomials \(\Lambda_i\) form the invariant factors of \(\bmat{A}\).
A well-known result in abstract algebra is the structure theorem for finitely generated modules over a \gls{pid},
stating that every such module is isomorphic to a sum of cyclic modules generated by its invariant factors.
Hence, if \(\mathcal{V}(\mat{A})\) denotes \(\field_q^n\) viewed as an \(\pring{\field_q}\)-module where \(x\) acts as multiplication by \(\bmat{A}\), then we obtain
\begin{equation}
    \label{eq:module-isomorphism}
    \mathcal{V}(\mat{A}) \isomorphic \qring{\pring{\field_q}}{\ideal{\Lambda_1}} \oplus \cdots \oplus \qring{\pring{\field_q}}{\ideal{\Lambda_r}}.
\end{equation}
The representation in \cref{eq:module-isomorphism} is directly connected to the rational canonical form of \(\bmat{A}\).
Let \(n_i = \deg(\Lambda_i)\) and \(\bmat{A}_i\) be the companion matrix of \(\Lambda_i(x)\).
Then there exists a decomposition of \(\sys(\field_q^n, \Gamma_1(\mat{A}))\) given as
\begin{equation}
    \label{eq:inv-fac-subsys}
    \sys(\field_q^n, \Gamma_1(\mat{A})) \isomorphic \bigotimes_{i=1}^r \sys(\field_q^{n_i}, \Gamma_1(\mat{A}_i)),
\end{equation}
where \(n=n_1 + \dots + n_r\).

Each subsystem \(\sys(\field_q^{n_i}, \Gamma_1(\mat{A}_i))\) can be further decomposed into smaller subsystems.
This can be accomplished via a factorization of its respective invariant factor as follows
\begin{equation}
    \label{eq:inv-fac-fac}
    \Lambda_i(x) = x^{s_i} \prod_{k=1}^{j_i} A_{i,k}(x)^{u_{i,k}},
\end{equation}
where \(s_i \in \natnum\), \(u_{i,k} \in \intnum^+\), and \(A_{i,k}(x)\) are irreducible and mutually coprime polynomials with \(A_{i,k}(0) \ne 0\). 
The factors of \(\Lambda_i(x)\) as given in \cref{eq:inv-fac-fac} are referred to as the elementary divisors of \(\bmat{A}\) \cite{Stevens1999,Toledo2005}.
Defining \(\Lambda_{i,0}(x) \coloneqq x^{s_i}\) and \(\Lambda_{i,k}(x) \coloneqq A_{i,k}(x)^{u_{i,k}}\), then by \cref{the:crt} we obtain
\begin{equation}
    \label{eq:crt-primary}
    \qring{\pring{\field_q}}{\ideal{\Lambda_i}} \isomorphic \qring{\pring{\field_q}}{\ideal{\Lambda_{i,0}}} \oplus \qring{\pring{\field_q}}{\ideal{\Lambda_{i,1}}} \oplus \cdots \oplus \qring{\pring{\field_q}}{\ideal{\Lambda_{i,j_i}}}.
\end{equation}
Consequently, if \(b_{i,k} = \deg(\Lambda_{i,k})\), \(\bmat{A}_{i,0}\) is the companion matrix of \(\Lambda_{i,0}(x)\), and \(\bmat{A}_{i,k}\) is the companion matrix of \(\Lambda_{i,k}(x)\), then
a subsystem \(\sys(\field_q^{n_i}, \Gamma_1(\mat{A}_i))\) can be written as a product of smaller subsystems
\begin{equation}
    \label{eq:subsys-deco}
    \sys(\field_q^{n_i}, \Gamma_1(\mat{A}_i)) \isomorphic \sys(\field_q^{s_i}, \Gamma_1(\mat{A}_{i,0})) \otimes \Biggl(\bigotimes_{k=1}^{j_i} \sys(\field_q^{b_{i,k}}, \Gamma_1(\mat{A}_{i,k}))\Biggr),
\end{equation}
where \(n_{i} = s_{i} + b_{i,1} + \dots + b_{i,j_i}\).
Moreover, \(\bmat{A}_{i,0}\) is nilpotent of index \(s_i\),
and \(\bmat{A}_{i,k}\) are invertible matrices with \(\det\bigl(\bmat{A}_{i,k}\bigr) \ne 0\).

It has been shown in \cite{Toledo2005} that \(\fgraph(\Gamma_1(\mat{A}_{i,0}))\) is a looped tree of height \(s_i\) that is rooted at the zero fixed point.
On the other hand, each of the graphs \(\fgraph(\Gamma_1(\mat{A}_{i,k}))\) represents a sum (i.e., disjoint union) of cycles, which can be described in a closed form
\cite{Elspas1959,Toledo2005}
\begin{equation}
    \label{eq:closed-form}
    \fgraph(\Gamma_1(\mat{A}_{i,k})) \isomorphic \mathcal{C}_1 + \sum\limits_{j=1}^{u_{i,k}} \mleft(\frac{q^{dj} - q^{d(j-1)}}{\ell_j}\mright) \cdot \mathcal{C}_{\ell_j},
\end{equation}
where \(d=\deg(A_{i,k})\), \(\ell_j=\order(x,A_{i,k}^j)\), and the first summand \(\mathcal{C}_1\) corresponds to the zero fixed point.

As can be seen, the analysis of any \gls{lfds} can be reduced to the case in which the characteristic and minimal polynomial of the system matrix coincide.
The dynamics of the overall \gls{lfds} can then be obtained by multiplying the trees and cycles of the subsystems together using \cref{pro:prod-cycles}
and \cite[Proposition 3.2]{Toledo2005}.
Another observation is that we can extract and group the nilpotent and the bijective factors, giving rise to the fact that any \gls{lfds} can be decomposed into a
product of a nilpotent subsystem by a bijective subsystem and is thus the product of a looped tree (rooted at the zero fixed point) by a sum of cycles,
as summarized in \cite[Theorem 1]{Toledo2005}.
By reordering the terms in \cref{eq:inv-fac-subsys,eq:subsys-deco}, we obtain
\begin{equation}
    \label{eq:deco-nil-bij}
    \sys(\field_q^n, \Gamma_1(\mat{A})) \isomorphic \underbrace{\mleft(\bigotimes_{i=1}^{r} \sys(\field_q^{s_i}, \Gamma_1(\mat{A}_{i,0}))\mright)}_{\text{nilpotent}} \otimes
    \underbrace{\mleft(\bigotimes_{i=1}^{r}\bigotimes_{k=1}^{j_i} \sys(\field_q^{b_{i,k}}, \Gamma_1(\mat{A}_{i,k}))\mright)}_{\text{bijective}}.
\end{equation}
In \cite{Rohde2024}, the decomposition into a product of a nilpotent subsystem by a bijective subsystem has been generalized for \glspl{lfds} defined over finite commutative rings, including \glspl{gr}.
An upper bound for the height of \glspl{lfds} has been proposed in \cite{Lindenberg2018}.

\begin{theorem}
    \label{the:lindenberg}
    \cite[Adapted from Theorem B]{Lindenberg2018} Let \(\sys(R_e^n, \Gamma_e(\mat{A}))\) be an \gls{lfds} and \(h\) be its height.
    If \(h_1\) is the height of \(\sys(\field_q^n, \Gamma_1(\mat{A}))\), then \(h \le h_1\cdot e\).
\end{theorem}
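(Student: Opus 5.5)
The plan is to compare the kernels of the powers of the system map at each level \(p^k\), and to lift ``\(\mat{A}^{h_1}\) kills everything'' one power of \(p\) at a time. First I recast the height as a statement about powers of the system map. For a linear map \(\Gamma\) on a finite module \(M\), the height is the least \(m\ge 0\) such that \(\Gamma^m(M)=\Gamma^{m+1}(M)\). Equivalently, it is the least \(m\) such that \(\ker \Gamma^{m}=\ker\Gamma^{m+1}\), i.e.\ the stabilisation index of the kernels, which coincides with the largest preperiod. This holds because, by Fitting's lemma, \(M=\ker\Gamma^m\oplus\Gamma^m(M)\) once \(m\) is large, and \(\Gamma\) is bijective on the image part. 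Writing \(\mat{B}=\pi_e(\mat{A})\) and \(K_m=\ker \mat{B}^m\subseteq R_e^n\), it therefore suffices to show \(\mat{B}^{m+1}\vec{x}=0\Rightarrow \mat{B}^{m}\vec{x}=0\) for \(m=h_1 e\). In practice I will prove the stronger statement that \(\mat{B}^{h_1 e}\) maps \(R_e^n\) onto the stable image \(\bigcap_j \mat{B}^j(R_e^n)\).

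The key step uses the Fitting decomposition over the residue field. Over \(\field_q\), the definition of \(h_1\) gives \(\field_q^n=\ker\bmat{A}^{h_1}\oplus \bmat{A}^{h_1}(\field_q^n)\), with \(\bmat{A}\) invertible on the second summand. I lift this splitting to \(R_e\), either by applying \cref{lem:hensel} to the factorization of the characteristic polynomial \(\chi\equiv x^{s}\cdot G\) with \(\gcd(x,G)=1\) over \(\field_q\), or by idempotent lifting. This gives \(R_e^n=N\oplus U\) with \(\mat{B}\)-invariant free summands \(N\) and \(U\). On \(U\) the map \(\mat{B}\) is invertible, since its determinant is a unit (its reduction is nonzero). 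On \(N\) the map \(\mat{B}\) reduces modulo \(p\) to \(\bmat{A}|_{\ker\bmat{A}^{h_1}}\), which is nilpotent of index at most \(h_1\). Hence \(\mat{B}^{h_1}N\subseteq pN\), and by induction \(\mat{B}^{h_1 k}N\subseteq p^kN\), using that \(\mat{B}\) commutes with multiplication by \(p\). Taking \(k=e\) gives \(\mat{B}^{h_1e}N=0\). Consequently every orbit enters \(U\)-dynamics, which are bijective, after at most \(h_1 e\) steps, and so \(h\le h_1 e\).

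The main obstacle is the lifting step. I must check that the Hensel factorization \(\chi_{\mat{B}}=f_0 f_1\), with \(\bar f_0=x^{s}\) and \(\bar f_1(0)\ne 0\), yields \(R_e^n=\ker f_0(\mat{B})\oplus\ker f_1(\mat{B})\). For this I use the Bézout relation \(s'f_0+t'f_1\equiv1\pmod{p^e}\) from \cref{lem:hensel} (or \cref{lem:lifting-Bezout}) together with Cayley–Hamilton over the commutative ring \(R_e\). I must also confirm that the reduction of \(\mat{B}|_N\) is exactly the nilpotent part of \(\bmat{A}\), with nilpotency index at most \(h_1\). For the latter I would argue via reduction: \(\overline{N}=\ker \bar f_0(\bmat{A})\) is the generalized \(0\)-eigenspace, whose nilpotency index equals the height \(h_1\) of the field system, by the decomposition \cref{eq:deco-nil-bij}.
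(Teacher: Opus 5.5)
Your argument is correct, and it takes a different route from the paper simply because the paper gives no proof of this statement: it imports the bound from \cite{Lindenberg2018} (Theorem B, adapted to \(R_e\)).

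You instead give a self-contained proof using only tools the paper already has. Hensel lifting of the coprime factorization \(\chi\equiv x^{s}G \pmod p\), together with Bézout and Cayley--Hamilton over \(R_e\), gives the matrix-level analogue of the nilpotent/bijective splitting of \cref{cor:deco-nil-bij}, namely \(R_e^n=N\oplus U\). The bound then comes from the filtration \(N\supseteq pN\supseteq\cdots\supseteq p^{e}N=0\). Each block of \(h_1\) iterations pushes \(N\) one level deeper, because \(\bmat{A}^{h_1}\) annihilates \(\overline{N}\subseteq\ker\bmat{A}^{s}=\ker\bmat{A}^{h_1}\). This makes it transparent where the factor \(e\) comes from. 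With \(p\) replaced by the maximal ideal \(\mathfrak{m}\) and idempotent lifting in place of Hensel, the same argument gives the bound over any finite commutative local ring with \(\mathfrak{m}^{e}=0\). The citation, by contrast, buys brevity.

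Two small tightenings:
\begin{itemize}
\item The step \(\mat{B}^{h_1}N\subseteq pN\) uses \(N\cap pR_e^n=pN\). This holds because \(N\) is a direct summand, and you should state it.
\item You only need the inclusion \(\overline{N}\subseteq\ker\bmat{A}^{h_1}\), not equality with the generalized \(0\)-eigenspace, so the appeal to \cref{eq:deco-nil-bij} is unnecessary. Likewise, invertibility of \(\mat{B}\) on \(U\) follows directly from \(f_1(\mat{B})|_U=0\) with \(f_1(0)\) a unit, so you need neither freeness of \(U\) nor its determinant.
\end{itemize}
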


Based on the general decomposition of the functional graph of \glspl{lfds} as discussed above,
the height of the nilpotent subsystem and the set of cycle lengths of the bijective subsystem are
substantial pieces of information about the dynamics of the systems.
They allow us to identify the structure of the connected components in the corresponding functional graphs.
For \glspl{lfds} whose characteristic and minimal polynomial equal a power of an irreducible polynomial, the set of cycle lengths follows a certain pattern, as the following result shows. 
\begin{lemma}
    \label{lem:cset-base}
    \cite[Lemma 4.1]{Deng2015}
    Let \(G\in\pring{\field_q}\) be irreducible with \(G(0)\ne0\), \(m\in\natnum\), and \(\sys(\field_q^n, \Gamma_1(\mat{A}))\) be an \gls{lfds} such that both
    the characteristic and minimal polynomial of \(\bmat{A}\) are equal to \(G(x)^m\).
    Furthermore, let \(\ell = \order(x,G)\) and \(\tau\) be the smallest integer satisfying \(p^\tau \ge m\).
    Then \(\cset(\Gamma_1(\mat{A})) = \{1, \ell, p\cdot\ell, \dots, p^\tau \cdot \ell\}\).
\end{lemma}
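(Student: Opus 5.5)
Since the characteristic and minimal polynomials of \(\bmat{A}\) coincide and equal \(G^m\), the module \(\mathcal{V}(\mat{A})\) is cyclic. By \cref{eq:module-isomorphism} it is isomorphic to \(\qring{\pring{\field_q}}{\ideal{G^m}}\), with \(x\) acting by multiplication. So I will work in \(M=\qring{\pring{\field_q}}{\ideal{G^m}}\) with \(\Gamma(u)=xu\). Because \(G(0)\ne 0\), \(x\) is a unit of \(M\), so \(\Gamma\) is bijective and every state lies on a cycle. The period of \(u\) is the least \(k\ge1\) with \((x^k-1)u\equiv 0 \pmod{G^m}\).

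\textbf{Step 1 (filtration by annihilators).} For each \(u\ne0\) I write \(u=G^{m-j}v\) with \(1\le j\le m\) and \(G\nmid v\); here \(j\) is determined by \(u\). Then \((x^k-1)u\equiv0 \pmod{G^m}\) holds exactly when \(G^j \mid (x^k-1)v\). Since \(v\) is coprime to \(G\), this is equivalent to \(G^j\mid x^k-1\), i.e. \(x^k\equiv1\pmod{G^j}\). Hence \(\per(\Gamma,u)=\order(x,G^j)\), and the zero state gives period \(1\).

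\textbf{Step 2 (computing the orders).} By \cref{the:order-x-pow}, applied to a lift \(g\) of \(G\) (so that \(\order_1(x,g^j)=\order(x,G^j)\)), we get \(\order(x,G^j)=p^{\tau_j}\ell\). Here \(\tau_j\) is the least integer with \(p^{\tau_j}\ge j\). As \(j\) runs over \(1,\dots,m\), \(\tau_j\) takes every value in \(\{0,1,\dots,\tau\}\), because \(\tau_j\) is nondecreasing in \(j\) and increases by at most one at each step. More concretely, for \(0\le s\le\tau\) the choice \(j=\min(p^s,m)\) gives \(\tau_j=s\); this uses \(p^{\tau-1}<m\). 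Each such \(j\) is realized by the element \(u=G^{m-j}\).

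\textbf{Step 3 (conclusion).} Combining the two steps gives \(\cset(\Gamma_1(\mat{A}))=\{1\}\cup\{p^s\ell:0\le s\le\tau\}\), which is the claimed set; the values \(1\) and \(\ell\) coincide when \(\ell=1\). The main point needing care is Step 1's reduction from \(G^j\mid (x^k-1)v\) to \(G^j\mid x^k-1\). This follows from unique factorization in \(\pring{\field_q}\) together with the irreducibility of \(G\). A secondary check is that the cyclic-module identification preserves periods; it does, since the isomorphism is one of \(\pring{\field_q}\)-modules and therefore conjugates \(\Gamma_1(\mat{A})\) to multiplication by \(x\).
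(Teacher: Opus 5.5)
Your proof is correct. The paper does not prove this lemma itself; it imports it from \cite{Deng2015}. Within the paper's framework, the lemma follows by combining the closed form \cref{eq:closed-form} with \cref{the:order-x-pow}, since the cycle lengths in \cref{eq:closed-form} are exactly \(1\) and \(\order(x,G^j)\) for \(1\le j\le m\). Your argument is that same route, made self-contained.

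Your Step~1 sorts the nonzero states \(u=G^{m-j}v\) by their exact annihilator \(G^j\), and this is precisely the mechanism behind \cref{eq:closed-form}. The \(q^{dj}-q^{d(j-1)}\) states with exact annihilator \(G^j\), where \(d=\deg(G)\), all have period \(\order(x,G^j)\).

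Your version has two advantages. It does not depend on the full graph decomposition. It also gives explicit representatives \(G^{m-j}=G^m/G^j\) for every cycle length, in the same spirit as \cref{lem:g-hat-field}. The closed form additionally gives the number of cycles of each length, which the lemma does not need.

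Your surjectivity check for \(j\mapsto\tau_j\) is right. Your only implicit assumption is \(m\ge 1\), which the statement needs anyway for \(\tau\) to make sense.
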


\paragraph{\texorpdfstring{\Glspl{lfds}}{LFDSs} over Galois rings}
Let us now assume the general case in which \(e > 1\).
When switching from \(\field_q\) to \(R_e\), the analysis of the dynamics of an \gls{lfds} becomes more difficult, because \(R_e\) is not a unique factorization domain.
In \cite{Deng2015}, however, the author describes the cyclic structure of \(\fgraph(\Gamma_e(\mat{A}))\), and how cycles in the homomorphic
functional graph over \(\field_q\) lift to cycles in \(\fgraph(\Gamma_e(\mat{A}))\).
The author especially elaborates on the structure of \(\cset(\Gamma_e(\mat{A}))\).
\begin{corollary}
    \label{cor:cset-structure}
    \cite[Corollary 5.1]{Deng2015}
    \(\cset(\Gamma_e(\mat{A})) \subseteq \big\{p^j\ell : \ell \in \cset(\Gamma_1(\mat{A})),\allowbreak 0 \le j < e \big\}\).
\end{corollary}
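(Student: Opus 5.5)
We aim to show that every period of a state of \(\Gamma_e(\mat{A})\) has the form \(p^j\ell\) with \(\ell\in\cset(\Gamma_1(\mat{A}))\) and \(0\le j<e\). The plan is to compare a periodic orbit over \(R_e\) with its image under reduction modulo \(p\), and then to bound how much the period can grow when we lift back from \(\field_q\) to \(R_e\).

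Fix \(\vec{x}\in R_e^n\). Since the period of \(\vec{x}\) equals the period of any point on its cycle, we may assume that \(\vec{x}\) is periodic, with period \(L=\per(\Gamma_e(\mat{A}),\vec{x})\). Let \(\mat{B}=\pi_e(\mat{A})\), so that \(\mat{B}^L\vec{x}=\vec{x}\). Reduction modulo \(p\) commutes with the dynamics, so \(\bar{\vec{x}}\) satisfies \(\bmat{A}^L\bar{\vec{x}}=\bar{\vec{x}}\). Hence \(\bar{\vec{x}}\) is a periodic point of \(\Gamma_1(\mat{A})\), and its period \(\ell\) lies in \(\cset(\Gamma_1(\mat{A}))\) and divides \(L\). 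It remains to show that \(L/\ell\) is a power \(p^j\) with \(j<e\).

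To do this, set \(\mat{C}=\mat{B}^\ell\), so that \(\mat{C}\vec{x}\equiv\vec{x}\pmod p\). We prove by induction on \(i\), using the argument of \cref{lem:power-p}, that
\[
(\mat{C}^{p^{i}}-\mat{I})\vec{x}\equiv 0 \pmod{p^{i+1}}.
\]
The key identity is
\[
\mat{C}^{p}-\mat{I}=(\mat{C}-\mat{I})\bigl(\mat{C}^{p-1}+\dots+\mat{I}\bigr).
\]
We apply the second factor to a vector \(\vec{y}\) of the \(\mat{C}\)-invariant module generated by \(\vec{x}\); on such vectors \(\mat{C}\vec{y}\equiv\vec{y}\pmod p\). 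Then \(\mat{C}^k\vec{y}\equiv\vec{y}\pmod p\) for every \(k\), so \((\mat{C}^{p-1}+\dots+\mat{I})\vec{y}\equiv p\vec{y}\equiv 0\pmod p\).

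The main obstacle is making this bookkeeping rigorous. The factor \(\mat{C}-\mat{I}\) contributes a factor \(p^{i}\) only on \(\vec{x}\) itself, not as a matrix congruence. I would handle this by working in the cyclic submodule \(R_e[\mat{C}]\vec{x}\), on which \(\mat{C}-\mat{I}\) maps into \(p^{i}\) times the module once the induction hypothesis holds. Here commutativity of polynomials in \(\mat{C}\) gives \((\mat{C}^{p^i}-\mat{I})g(\mat{C})\vec{x}=g(\mat{C})(\mat{C}^{p^i}-\mat{I})\vec{x}\).

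For the inductive step, write \(\mat{D}=\mat{C}^{p^i}\) and \(\vec{v}=(\mat{D}-\mat{I})\vec{x}\in p^{i+1}R_e^n\). Then
\[
(\mat{D}^p-\mat{I})\vec{x}=(\mat{D}^{p-1}+\dots+\mat{I})\vec{v}.
\]
We write \(\vec{v}=p^{i+1}\vec{w}\), with \(\vec{w}\) lifted to \(R^n\), using torsion-freeness of \(R\) (\cref{lem:Z-free-modules}). Since \(\mat{D}\equiv\mat{I}\) on \(\vec{x}\) modulo \(p\), we get \(\mat{D}^k\vec{v}\equiv\vec{v}\pmod{p^{i+2}}\), so the sum of the \(p\) terms is congruent to \(p\vec{v}\equiv 0\pmod{p^{i+2}}\).

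Taking \(i=e-1\) gives \(\mat{B}^{\ell p^{e-1}}\vec{x}=\vec{x}\) in \(R_e^n\). Hence \(L\) divides \(\ell p^{e-1}\) and is a multiple of \(\ell\), so \(L=p^j\ell\) with \(0\le j\le e-1\), which proves the corollary.
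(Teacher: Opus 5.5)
The paper does not prove this corollary; it quotes it from Deng's Corollary~5.1. So your argument has to stand on its own, and it breaks at the inductive step.

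\textbf{The gap.} From \(\vec{v}=(\mat{D}-\mat{I})\vec{x}=p^{i+1}\vec{w}\) you conclude \(\mat{D}^k\vec{v}\equiv\vec{v}\pmod{p^{i+2}}\) because \(\mat{D}\) fixes \(\vec{x}\) modulo \(p\). What this step actually needs is \(\bmat{D}\,\bar{\vec{w}}=\bar{\vec{w}}\), and \(\bmat{D}\,\bar{\vec{x}}=\bar{\vec{x}}\) says nothing about \(\bar{\vec{w}}\). Dividing by \(p^{i+1}\) takes you out of the cyclic module \(R_e[\mat{C}]\vec{x}\): knowing \(\vec{v}\in R_e[\mat{C}]\vec{x}\cap p^{i+1}R_e^n\) does not give \(\vec{v}\in p^{i+1}R_e[\mat{C}]\vec{x}\). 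So your proposed remedy, that \(\mat{C}-\mat{I}\) maps the cyclic submodule into \(p^{i}\) times itself, is false.

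In fact the intermediate claim is false: \(L/\ell\) need not be a power of \(p\) when \(\ell=\per(\Gamma_1(\mat{A}),\bar{\vec{x}})\). Take \(p=3\), \(d=1\) (so \(R_2\cong\intnum/9\intnum\)), \(e=2\), \(\mat{A}=\begin{pmatrix}1&0\\3&-1\end{pmatrix}\) and \(\vec{x}=(1,0)^T\).
\begin{itemize}
\item Then \(\pi_2(\mat{A})\vec{x}=(1,3)^T\) and \(\pi_2(\mat{A})^2\vec{x}=\vec{x}\), so \(L=2\).
\item But \(\bmat{A}\bar{\vec{x}}=\bar{\vec{x}}\), so \(\ell=1\), and \(L\nmid \ell p^{e-1}=3\).
\item In your step with \(i=0\) you get \(\vec{w}=(0,1)^T\) with \(\bmat{A}\bar{\vec{w}}=-\bar{\vec{w}}\), and \(\vec{v}+\mat{D}\vec{v}+\mat{D}^2\vec{v}=(0,3)^T\not\equiv 0\pmod 9\).
\end{itemize}
The corollary survives here only because \(2\in\cset(\Gamma_1(\mat{A}))=\{1,2\}\) is realized by a \emph{different} vector. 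In general, the field cycle length \(\ell'\) with \(L=p^j\ell'\) cannot be taken to be the period of \(\bar{\vec{x}}\).

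\textbf{The missing idea.} You need \(\mat{C}\equiv\mat{I}\pmod p\) as an operator congruence on a whole \(\pi_e(\mat{A})\)-invariant free summand, not at a single vector. Only then does the argument of \cref{lem:power-p} apply, working in a commutative ring of matrices over \(R\). One way to get this:
\begin{itemize}
\item Hensel-lift the coprime factorization \(\chi_{\bmat{A}}=x^{s}\prod_i G_i^{m_i}\) to split \(R_e^n=V_0\oplus\bigoplus_i V_i\) into invariant free summands. Here \(V_i/pV_i\) is the \(G_i\)-primary part of \(\field_q^n\), and \(\pi_e(\mat{A})\) is nilpotent on \(V_0\), so periodic points have zero \(V_0\)-component.
\item Let \(\ell_i=\order(x,G_i)\) and let \(p^{\tau_i}\ell_i\) be the largest cycle length on \(V_i/pV_i\). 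By the field case (\cref{lem:cset-base} on each cyclic summand), \(\bmat{A}^{\ell_i p^{\tau_i}}=\mat{I}\) on all of \(V_i/pV_i\). Your induction then legitimately gives \(\pi_e(\mat{A})^{\ell_i p^{\tau_i+e-1}}=\mat{I}\) on \(V_i\).
\item Write a nonzero component as \(\vec{x}_i=p^k\vec{y}\) with \(\bar{\vec{y}}\neq 0\). This shows \(\ell_i\) divides its period, so the period is \(\ell_i p^{j_i}\) with \(j_i\le\tau_i+e-1\).
\item Let \(\set{S}\) be the set of indices with nonzero component. Then \(L=p^{\beta}\lcm_{i\in\set{S}}\ell_i\) with \(\beta=\max_{i\in\set{S}} j_i\).
\item For \(c=\min(\beta,\max_{i\in\set{S}}\tau_i)\), one checks that \(p^{c}\lcm_{i\in\set{S}}\ell_i\in\cset(\Gamma_1(\mat{A}))\) and \(0\le\beta-c\le e-1\), which is the corollary.
\end{itemize}
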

\begin{definition}
    \label{def:omega}
    \emph{(Cycle Length Factor)}
    For any \(\ell \in \csetp(\Gamma_e(\mat{A}))\), we define
    \[\omega(\Gamma_e(\mat{A}), \ell) = \max\{i : p^i\ell \in \cset(\Gamma_e(\mat{A}))\}.\]
\end{definition}
\begin{theorem}
    \label{the:omega}
    \cite[Theorem 5.1]{Deng2015}
    Let \(\ell \in \csetp(\Gamma_e(\mat{A}))\). Then \(\omega(\Gamma_{e+1}(\mat{A}),\ell) = \omega(\Gamma_e(\mat{A}),\ell)\)
    or \(\omega(\Gamma_{e+1}(\mat{A}),\ell)=\omega(\Gamma_e(\mat{A}),\ell) + 1\) for any \(e \ge 1\).
\end{theorem}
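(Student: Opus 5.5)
We argue through the reduction map \(\pi\colon R_{e+1}^n\to R_e^n\), which intertwines the two systems: \(\pi\circ\Gamma_{e+1}(\mat{A})=\Gamma_e(\mat{A})\circ\pi\). The plan is to show two inequalities,
\[\omega(\Gamma_e(\mat{A}),\ell)\le\omega(\Gamma_{e+1}(\mat{A}),\ell)\le\omega(\Gamma_e(\mat{A}),\ell)+1 .\]
Write \(\mat{M}=\pi_{e+1}(\mat{A})\) and \(\mat{M}'=\pi_e(\mat{A})\).

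\emph{Lower bound.} We use the facts that \(\pi\) is surjective and maps periodic points of \(\Gamma_{e+1}(\mat{A})\) onto periodic points of \(\Gamma_e(\mat{A})\).
1. Let \(\vec{y}\) be a periodic point of \(\Gamma_e(\mat{A})\) with period \(p^i\ell\), where \(i=\omega(\Gamma_e(\mat{A}),\ell)\).
2. Choose any preimage \(\vec{x}\) of \(\vec{y}\) under \(\pi\). Some iterate \(\mat{M}^t\vec{x}\) is periodic, and it maps to \(\mat{M}'^t\vec{y}\).
3. Since \(\vec{y}\) is periodic, \(\mat{M}'^t\vec{y}\) lies on the same cycle as \(\vec{y}\), so it still has period \(p^i\ell\).
4. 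The period of a point is divisible by the period of its image under \(\pi\). Hence \(\per(\Gamma_{e+1},\mat{M}^t\vec{x})\) is a multiple of \(p^i\ell\).
5. By \cref{cor:cset-structure}, that period has the form \(p^j\ell_0\) with \(\ell_0\in\cset(\Gamma_1(\mat{A}))\). The \(p\)-free part is determined mod \(p\), so \(\ell_0\) has \(p\)-free part \(\ell\) in the appropriate sense.
6. Therefore \(\mat{M}^t\vec{x}\) has period \(p^{j}\ell\) with \(j\ge i\). This gives \(\omega(\Gamma_{e+1}(\mat{A}),\ell)\ge\omega(\Gamma_e(\mat{A}),\ell)\).

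There is a subtlety in step 5: elements of \(\cset\) over \(\field_q\) may already contain powers of \(p\), by \cref{lem:cset-base}. So it is cleaner to phrase everything via "period with \(p\)-free part \(\ell\)". We then define \(\omega\) as the maximal exponent among such periods, which is consistent with the definition as stated.

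\emph{Upper bound.} Let \(\vec{x}\in R_{e+1}^n\) be periodic with period \(p^j\ell\), and let \(\vec{y}=\pi(\vec{x})\).
1. The point \(\vec{y}\) is periodic, with period \(p^i\ell\) for some \(i\le\omega(\Gamma_e(\mat{A}),\ell)\).
2. Put \(\mat{B}=\mat{M}^{p^i\ell}\). Then \(\mat{B}\vec{x}\equiv\vec{x}\pmod{p^e}\), so \((\mat{B}-\mat{I})\vec{x}=p^e\vec{z}\) for some \(\vec{z}\).
3. Restrict to the cyclic submodule generated by \(\vec{x}\) under \(\mat{M}\). On it, \(\mat{B}\) commutes with everything and satisfies \(\mat{B}\equiv\mat{I}\) modulo \(p^e\).
4. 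Apply \cref{lem:power-p} in the commutative ring \(\pring{R_{e+1}}\) acting on this submodule, taking \(a=\mat{B}\), \(b=\mat{I}\). It gives \(\mat{B}^p\equiv\mat{I}\pmod{p^{e+1}}\) on the submodule.
5. Hence \(\mat{M}^{p^{i+1}\ell}\vec{x}=\vec{x}\), so \(j\le i+1\le\omega(\Gamma_e(\mat{A}),\ell)+1\).

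The main obstacle is step 4. The congruence \(\mat{B}\vec{w}\equiv\vec{w}\pmod{p^e}\) is known only for \(\vec{w}\) in the orbit of \(\vec{x}\), not as a ring identity. The fix is to work in the commutative ring \(\pring{R}/\mathrm{Ann}(\vec{x})\), or more simply to expand directly. Writing \(\mat{B}=\mat{I}+\mat{N}\), where \(\mat{N}\) maps the orbit's span into \(p^e\cdot(\text{span})\), we get
\[(\mat{I}+\mat{N})^p\vec{x}=\vec{x}+p\mat{N}\vec{x}+\binom{p}{2}\mat{N}^2\vec{x}+\dots .\]
Every term after \(\vec{x}\) is divisible by \(p^{e+1}\), since \(p\mat{N}\) contributes \(p\cdot p^e\) and \(\mat{N}^k\) with \(k\ge2\) contributes \(p^{ke}\) with \(ke\ge e+1\) for \(e\ge1\). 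For this, \(\mat{N}\) must preserve \(p^e\)-divisibility on the span. That holds because \(\mat{N}\) is a polynomial in \(\mat{M}\) and kills the span modulo \(p^e\), using \(\mat{N}\mat{M}^k\vec{x}=\mat{M}^k\mat{N}\vec{x}\).

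Combining the two bounds shows that \(\omega\) either stays the same or increases by exactly one when passing from \(e\) to \(e+1\).
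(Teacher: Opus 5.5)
The paper does not prove this statement; it imports it from \cite{Deng2015}. Your argument therefore has to stand on its own, and it has a genuine gap in the upper bound.

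Your step~5 amounts to the claim that a periodic \(\vec{x}\in R_{e+1}^n\) satisfies \(\mat{M}^{pk}\vec{x}=\vec{x}\), where \(k=\per(\Gamma_e(\mat{A}),\pi(\vec{x}))\). This claim is false. Take \(p=2\), \(e=1\), and \(\mat{M}=\mat{I}+\mat{N}\) over \(\mathbb{Z}/4\), with \(\mat{N}\) the \(3\times 3\) matrix having ones on the superdiagonal, and let \(\vec{x}=(0,0,2)^T\). Then \(\pi(\vec{x})=0\), so \(k=1\) and your \(\mat{N}\) is exactly this shift. We have \(\mat{N}\vec{x}=(0,2,0)^T=2\vec{z}\) with \(\vec{z}=(0,1,0)^T\). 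But \(\mat{M}^2\vec{x}=\vec{x}+2\mat{N}\vec{x}+\mat{N}^2\vec{x}=(2,0,2)^T\neq\vec{x}\), and in fact \(\vec{x}\) has period \(4\). The theorem itself survives here, since \(\omega(\Gamma_1(\mat{A}),1)=2\) and \(\omega(\Gamma_2(\mat{A}),1)=3\).

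The error is in your fix. \(\mat{N}\) maps the orbit span \(W\) into \(W\cap p^eR_{e+1}^n\), and this need not equal \(p^eW\), because cyclic submodules are not pure. In the example \(W=2(\mathbb{Z}/4)^3\), so \(p^eW=0\) while \(\mat{N}\vec{x}\neq 0\). Consequently the last term \(\mat{N}^p\vec{x}=p^e\mat{N}^{p-1}\vec{z}\), which carries no binomial factor of \(p\), survives. Nothing forces \(\mat{N}^{p-1}\vec{z}\equiv 0\pmod p\), since \(\vec{z}\) need not lie in \(W\).

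The maximality of \(\omega=\omega(\Gamma_e(\mat{A}),\ell)\) has to be used on a pure submodule containing \(\vec{x}\), not only through the single point \(\pi(\vec{x})\). One way to do this:
\begin{itemize}
\item Take \(W=\ker(\mat{M}^{p^s\ell}-\mat{I})\) with \(s\) large. The periodic points form a direct summand on which \(\mat{M}^{p^s\ell}\) has order prime to \(p\), so averaging its powers gives a projection onto \(W\) that is a polynomial in \(\mat{M}\) and commutes with \(\pi\). Hence \(W\) is a direct summand.
\item Show that every point of \(\pi(W)\) has period dividing \(p^{\omega}\ell\). This requires combining a point of period \(p^a\ell'\), with \(\ell'\mid\ell\), with a point of period exactly \(\ell\).
\item Only then does \(\mat{N}=\mat{M}^{p^{\omega}\ell}-\mat{I}\) map \(W\) into \(p^eW\), so that your binomial expansion goes through.
\end{itemize}

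The same phenomenon breaks your lower bound as written, and also step~1 of the upper bound: the \(p\)-free part of a period is \emph{not} determined modulo \(p\). Over \(\mathbb{Z}/4\), let \(\mat{M}=\operatorname{diag}(1,\mat{C})\) with \(\mat{C}\) the companion matrix of \(x^2+x+1\), so \(\mat{C}^3=\mat{I}\).
\begin{itemize}
\item \(\vec{x}=(1,2,0)^T\) has period \(3\), although \(\pi(\vec{x})=(1,0,0)^T\) is a fixed point. So an arbitrary preimage chosen in your step~2 need not have period of the form \(p^j\ell\).
\item \((0,2,0)^T\) has period \(3\) while its reduction is \(0\). So in the upper bound, \(\per(\Gamma_e(\mat{A}),\pi(\vec{x}))\) is only of the form \(p^i\ell'\) with \(\ell'\mid\ell\).
\end{itemize}
The lower bound, however, needs no lifting at all. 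The map \(\vec{v}\mapsto p\vec{v}\) from \(R_e^n\) to \(R_{e+1}^n\) is well defined, injective (since \(R\) is torsion-free), and commutes with \(\mat{A}\), exactly as in \cref{lem:z-lift}. It therefore preserves periods, which gives \(\cset(\Gamma_e(\mat{A}))\subseteq\cset(\Gamma_{e+1}(\mat{A}))\) and hence \(\omega(\Gamma_e(\mat{A}),\ell)\le\omega(\Gamma_{e+1}(\mat{A}),\ell)\).
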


\subsubsection{\texorpdfstring{\Glspl{lfds}}{LFDSs} on Cyclic \texorpdfstring{\(\pring{\field_q}\)-Modules}{Modules Over Finite Fields}}
\label{sssec:clfds}
As we have seen in \cref{eq:inv-fac-subsys}, any \gls{lfds} defined over \(\field_q\) can be written as a product of linear subsystems
of the form \(\sys(\field_q^{n_i}, \Gamma_1(\mat{A}_i))\), where \(\field_q^{n_i}\) is isomorphic to \(\qring{\pring{\field_q}}{\ideal{\Lambda_i}}\),
viewed as a cyclic \(\pring{\field_q}\)-module.
Based on this isomorphism, we can construct a polynomial representation of \glspl{lfds} acting on cyclic modules.

Let \(S\) be a commutative ring with identity, \(f\in S\), and \(I\) be an ideal of \(S\).
We define the multiplication-by-\(f\)-map \(\Gamma(f,I)\) as follows
\[
    \Gamma(f,I):
        \begin{cases}
            \qring{S}{I} &\rightarrow \qring{S}{I} \\
            g &\mapsto fg
        \end{cases},
\]
considering \(\qring{S}{I}\) as an \(S\)-module.
As it is customary, we identify an element in \(S\) with the principal ideal it generates.
Following this practice, if \(I=\ideal{m}\) denotes a principal ideal generated by some \(m\in S\), we identify \(\Gamma(f,I)\) with \(\Gamma(f,m)\).
In the case that \(S\) is an \(R\)-algebra, we additionally define \(\Gamma_e(f, I) \coloneqq \Gamma(\pi_e(f),I)\).
Considering the module structures given in \cref{eq:module-isomorphism,eq:crt-primary}, and setting \(S=\pring{\field_q}\), and \(f(x) = x\), we obtain the isomorphisms
\begin{align}
    \label{eq:system-isomorphism-if}
    \sys(\field_q^n, \Gamma_1(\mat{A}_i)) &\isomorphic \sys(\qring{\pring{\field_q}}{\ideal{\Lambda_i}}, \Gamma_1(f,\Lambda_i)), \\
    \sys(\field_q^{s_i}, \Gamma_1(\mat{A}_{i,0})) &\isomorphic \sys(\qring{\pring{\field_q}}{\ideal{\Lambda_{i,0}}}, \Gamma_1(f,\Lambda_{i,0})), \text{ and}\\
    \sys(\field_q^{b_{i,k}}, \Gamma_1(\mat{A}_{i,k})) &\isomorphic \sys(\qring{\pring{\field_q}}{\ideal{\Lambda_{i,k}}}, \Gamma_1(f,\Lambda_{i,k})).
\end{align}

Let us recall some standard definitions.
The radical of \(I\) is the ideal \(\rad(I) = \{r \in S : r^n \in I \text{ for some } n \in \intnum^+\}\).
Furthermore, if \(S\) is a \gls{ufd} and \(f=\prod_{i=1}^mf_i^{\alpha_i}\) is the factorization of \(f\) into irreducibles (with each \(\alpha_i > 0\)),
then \(g\in\rad(fS)\) if and only if \(\rad(f)\coloneqq\prod_{i=1}^m f_i \mid g\).
In other words, \(\rad(fS)\) is precisely the principal ideal generated by \(\rad(f)\).
\begin{definition}
    \label{def:f-deco}
    \emph{(\(f\)-decomposition)}
    Let \(f \in \set{S}\) and \(I_1, I_2\) be ideals of \(\set{S}\).
    An \(f\)-decomposition of an ideal \(I\) of \(\set{S}\) is a decomposition of the form \(I = I_1 \cdot I_2\) such that \(fS\) is coprime with \(I_1\) (i.e. \(I_1 + fS = S\), or equivalently \(a+bf=1\) for some \(a\in I_1\) and \(b\in S\)) and \(f\in \rad(I_2)\).
\end{definition}
The fact that \(f\in\rad(I_2)\) implies the existence of an integer \(N \ge 0\) such that \(f^N\in I_2\).
Moreover, since \(f\) and \(I_1\) are coprime, we can find \(a \in S\) and \(b\in I_1\) such that \(af + b = 1\), which, when raised to the \(N\)th power and regrouping elements in \(I\), gives \(a^Nf^N + b' = 1\) with \(f^N \in I_2\) and \(b' \in I_1\).
Applying \cref{the:crt}, we then obtain an isomorphism \(\qring{S}{I} \isomorphic \qring{S}{I_1} \times \qring{S}{I_2}\), which leads to a decomposition of the map \(\Gamma(f,I) = \Gamma(f,I_1) \times \Gamma(f,I_2)\).
It is easy to verify that \(\Gamma(f,I_1)\) is bijective, while the map \(\Gamma(f,I_2)\) is nilpotent.

In \cite{Qureshi2019}, the authors investigate the dynamics of linear maps \(\Gamma(f,I)\) in the case where \(S\) is a Dedekind domain.
They achieve a complete characterization by decomposing \(I\) into a set of useful submodules for analyzing linear systems acting on cyclic modules.
Consider the case \(S = \pring{\field_q}\), noting that \(S\) is an Euclidean domain, and let \(F, M \in \pring{\field_q}\) be nonzero.
In this case, the \(F\)-decomposition of \(M\) in \(\pring{\field_q}\) is of the form \(M = M_1 \cdot M_2\) and can be obtained by setting
\(M_1 \in \pring{\field_q}\) equal to the greatest monic divisor of \(M\) that is coprime with \(F\), and letting \(M_2 \in \pring{\field_q}\) be defined by \(M = M_1 \cdot M_2\).
It is easy to verify that \(F\in\rad(M_2)\).
Such an \(f\)-decomposition allows us to construct the functional graph of \glspl{lfds} acting on \(\qring{\pring{\field_q}}{\ideal{M}}\).
For example, the construction of \(\fgraph(\Gamma(f,I))\) for \(S=\pring{\field_q}\) and \(I=\ideal{x^n-1}\) is thoroughly discussed in \cite{Panario2018,Qureshi2019}.

\begin{theorem}
    \label{the:qureshi}
    \cite[Adapted from Theorem 3.6]{Qureshi2019}
    Let \(F, M\in\pring{\field_q}\) be nonzero polynomials with \(1 \le \deg(F) < \deg(M)\).
    Suppose that \(M = M_1 \cdot M_2\) is the \(F\)-decomposition of \(M\) in \(\pring{\field_q}\) such that \(F\) is coprime with \(M_1\) and \(F\in\rad(M_2)\).
    Furthermore, let \(\mathring{\mathcal{T}}_{M_2(F)} = \fgraph(\Gamma(F, M_2))\).
    Then the following isomorphism holds:
    \begin{equation}
        \label{eq:qureshi}
    \fgraph(\Gamma(F,M)) \cong \bigoplus_{G \mid M_1} \frac{\mleft\vert \mleft(\qring{\pring{\field_q}}{\ideal{G}}\mright)^\times\mright\vert}{\order(F,G)} \cdot \mleft(\mathring{\mathcal{T}}_{M_2(F)} \otimes \mathcal{C}_{\order(F,G)} \mright).
    \end{equation}
\end{theorem}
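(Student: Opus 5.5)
This is an adaptation of \cite[Theorem 3.6]{Qureshi2019}, so the plan is to rebuild it from the \(F\)-decomposition and the product rules for functional graphs.

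\textbf{Splitting the system.} Write \(M=M_1M_2\) with \(\gcd(F,M_1)=1\) and \(F\in\rad(M_2)\). By \cref{def:f-deco} and the remark after it, some power \(F^N\) lies in \(\ideal{M_2}\) and is coprime to \(M_1\), so \(\gcd(M_1,M_2)=1\). \cref{the:crt} then gives \(\qring{\pring{\field_q}}{\ideal{M}}\isomorphic \qring{\pring{\field_q}}{\ideal{M_1}}\times\qring{\pring{\field_q}}{\ideal{M_2}}\). The isomorphism commutes with multiplication by \(F\), so
\[\fgraph(\Gamma(F,M))\isomorphic \fgraph(\Gamma(F,M_1))\otimes\fgraph(\Gamma(F,M_2)).\]
The second factor is nilpotent, because \(F^N\equiv0\pmod{M_2}\). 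Its functional graph is therefore a single looped tree rooted at \(0\), which is the \(\mathring{\mathcal{T}}_{M_2(F)}\) of the statement.

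\textbf{The bijective factor.} Since \(F\) is a unit modulo \(M_1\), \(\Gamma(F,M_1)\) is a permutation, so I only need its cycle structure. I partition \(\qring{\pring{\field_q}}{\ideal{M_1}}\) by annihilator: \(u\) goes to the monic divisor \(G=M_1/\gcd(u,M_1)\) of \(M_1\). Then:
\begin{itemize}
\item Writing \(u=(M_1/G)\,v\) with \(v\) a unit modulo \(G\) identifies the class of \(G\) with \(\mleft(\qring{\pring{\field_q}}{\ideal{G}}\mright)^\times\), via the ideal \(\qring{(M_1/G)\pring{\field_q}}{\ideal{M_1}}\isomorphic \qring{\pring{\field_q}}{\ideal{G}}\).
\item Multiplication by \(F\) preserves each class.
\item On that class it acts as multiplication by the unit \(F\) on the group \(\mleft(\qring{\pring{\field_q}}{\ideal{G}}\mright)^\times\).
\end{itemize}
Multiplication by an element of order \(\order(F,G)\) on a finite group is fixed-point-free translation by a cyclic subgroup, so every orbit has exactly \(\order(F,G)\) elements. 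The class of \(G\) therefore contributes \(\frac{\vert(\qring{\pring{\field_q}}{\ideal{G}})^\times\vert}{\order(F,G)}\cdot\mathcal{C}_{\order(F,G)}\). The case \(G=1\) gives the zero fixed point, using the conventions \(\order(F,1)=1\) and \(\vert(\qring{\pring{\field_q}}{\ideal{1}})^\times\vert=1\).

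\textbf{Combining.} Distributivity of \(\otimes\) over \(\oplus\), together with the fact that a cycle times a looped tree is \(\mathring{\mathcal{T}}\otimes\mathcal{C}_\ell\) \cite[Proposition 3.4]{Toledo2005}, yields \eqref{eq:qureshi}.

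The main obstacle is the orbit-partition step: checking carefully that the annihilator classes are exactly the unit groups of \(\qring{\pring{\field_q}}{\ideal{G}}\) for the divisors \(G\mid M_1\), with no double counting. I would settle this by verifying that \(u\mapsto u/(M_1/G)\bmod G\) is a well-defined bijection onto the units, using that \(\pring{\field_q}\) is a PID. The hypothesis \(\deg F<\deg M\) plays no role beyond making \(F\) a nonzero element of the quotient.
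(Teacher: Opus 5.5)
Your proof is correct, and it is more self-contained than what the paper provides. The paper gives no proof of its own. It performs the same CRT splitting \(\Gamma(F,M)\isomorphic\Gamma(F,M_1)\times\Gamma(F,M_2)\) into a bijective and a nilpotent factor, in the discussion following \cref{def:f-deco}. It then defers the statement to \cite[Theorem 3.6]{Qureshi2019}, which is formulated over Dedekind domains.

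Your partition of \(\qring{\pring{\field_q}}{\ideal{M_1}}\) by annihilator ideals supplies the cycle count directly in the PID \(\pring{\field_q}\). Each class is identified with \(\bigl(\qring{\pring{\field_q}}{\ideal{G}}\bigr)^\times\), and \(F\) acts on it by translation by the cyclic subgroup it generates. The citation buys generality. Your route is elementary, makes the \(G=1\) term explicit, and shows that the hypothesis \(\deg(F)<\deg(M)\) is not needed.

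Two cosmetic remarks. First, the translation action is \emph{free} rather than fixed-point-free: for \(\order(F,G)=1\) it is the identity, although your orbit-size conclusion still holds. Second, \cite[Proposition 3.4]{Toledo2005} is not required, since \eqref{eq:qureshi} leaves \(\mathring{\mathcal{T}}_{M_2(F)}\otimes\mathcal{C}_{\order(F,G)}\) unevaluated; distributivity and commutativity of \(\otimes\) up to isomorphism suffice.
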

The proof of \Cref{the:qureshi} is covered by \cite[Theorem 3.6]{Qureshi2019} for the special case of linear systems over \(\field_q\).
We note that by the above discussion, \(\mathring{\mathcal{T}}_{M_2(F)}\) is the nilpotent part of \(\fgraph(\Gamma(F,I))\) and represents a looped tree rooted at the zero fixed point.
By distributive law, we can rewrite \cref{eq:qureshi} as
\begin{equation}
    \label{eq:qureshi-deco}
    \fgraph(\Gamma(F,M)) \isomorphic \mathring{\mathcal{T}}_{M_2(F)} \otimes \mleft(\bigoplus\limits_{G \mid M_1}\frac{\mleft\vert \mleft(\qring{\pring{\field_q}}{\ideal{G}}\mright)^\times\mright\vert}{\order(F,G)} \cdot \mathcal{C}_{\order(F,G)}\mright),
\end{equation}
highlighting the fact that such a polynomial-based analysis similarly describes a decomposition of a \glspl{lfds} into the product of a tree by a sum of cycles,
and thus represents a decomposition into a product of a nilpotent and a bijective subsystem.

A method to compute \(\mathcal{T}_{M_2(F)}\)
is described in \cite[Definition 2.3]{Qureshi2015}.
In particular, the following result shows how to determine the height of \(\mathcal{T}_{M_2(F)}\).
\begin{proposition}
    \label{pro:height-field}
    \cite[Adapted from Proposition 2.4]{Qureshi2015}
    Let \(F, M\in\pring{\field_q}\) be nonzero polynomials with \(1 \le \deg(F) < \deg(M)\).
    Suppose that \(M = M_1 \cdot M_2\) is the \(F\)-decomposition of \(M\) in \(\pring{\field_q}\) such that \(\ideal{F}\) is coprime with \(\ideal{M_1}\) and \(F\in\rad(M_2)\).
    Furthermore, let \(F(x) = \prod_{i=1}^s G_i(x)^{k_i}\), where \(G_i \in \pring{\field_q}\) are irreducible and mutually coprime polynomials.
    If \(\kappa_{G_i}(z)\) is the exponent of \(G_i(x)\) in \(z(x)\), then the height \(h\) of the tree \(\mathcal{T}_{M_2(F)}\) associated with \(M_2\) and \(F\) is given by
    \(h = \max\{\lceil \kappa_{G_i}(M_2) / \kappa_{G_i}(F) \rceil : G_i \mid F\}.\)
\end{proposition}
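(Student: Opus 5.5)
The plan is to reduce the height computation to the multiplicities of the irreducible factors of \(F\) inside \(M_2\), working one irreducible factor at a time. By \cref{the:crt}, the ideal \(\ideal{M_2}\) factors as \(\prod_i \ideal{G_i^{\kappa_{G_i}(M_2)}}\). Indeed, \(F\in\rad(M_2)\) and \(M_2\) has no factor coprime to \(F\), so every irreducible factor of \(M_2\) is one of the \(G_i\). The factors \(G_i^{\kappa_{G_i}(M_2)}\) are pairwise coprime. Therefore
\[
\Gamma(F,M_2)\isomorphic \bigotimes_i \Gamma\bigl(F,G_i^{\kappa_{G_i}(M_2)}\bigr).
\]
Each factor is nilpotent, so its functional graph is a looped tree rooted at \(0\). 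By \cite[Proposition 3.2]{Toledo2005}, the product of looped trees has height equal to the maximum of the heights. It therefore suffices to show that the height of \(\Gamma(F,G^a)\) is \(\lceil a/k\rceil\), where \(G=G_i\), \(a=\kappa_G(M_2)\) and \(k=\kappa_G(F)\).

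For a nilpotent map, the height of the looped tree is the least \(t\ge 0\) such that \(\Gamma^t\) is the zero map; the root is \(0\), and every state reaches \(0\) after at most \(t\) steps. So I must find the least \(t\) with \(F^t g\equiv 0 \pmod{G^a}\) for all \(g\). Taking \(g=1\), this is the least \(t\) with \(G^a\mid F^t\). Write \(F=G^k U\) with \(\gcd(U,G)=1\). Then \(U\) is a unit modulo \(G^a\), and the condition becomes \(G^a\mid G^{kt}\), that is, \(kt\ge a\). The minimal value is \(t=\lceil a/k\rceil\). The witness \(g=1\) has exactly this transient length, so the height equals this value and is not merely bounded by it.

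Taking the maximum over \(i\) gives \(h=\max_i\lceil\kappa_{G_i}(M_2)/\kappa_{G_i}(F)\rceil\). Factors \(G_i\) that do not divide \(M_2\) contribute \(0\) and do not affect the maximum.

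The main subtlety is the identification of the height with the nilpotency index. Every vertex lies on a unique path to the root \(0\), and the longest such path has length equal to the maximal transient length \(\max_g \pper(\Gamma,g)\), which is exactly the least \(t\) with \(F^t\equiv 0\) on the whole module. The product step also requires checking that the CRT isomorphism intertwines multiplication by \(F\) componentwise. Both checks are routine.
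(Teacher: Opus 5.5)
Your argument is correct and complete, and it takes a different route from the paper, which gives no self-contained proof. The paper cites \cite[Proposition 2.14]{Qureshi2015} for the height of the explicitly constructed tree \(\mathcal{T}_{M_2(F)}\). It then asserts that the closed form is a straightforward generalization of \cite[Proposition 2.4]{Qureshi2015}, a result stated there in a different setting.

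You instead compute the height directly as the nilpotency index of \(\Gamma(F,M_2)\), which is the preperiod of \(1\). Your only tools are unique factorization in \(\pring{\field_q}\), \cref{the:crt}, and the max-of-heights rule for products of looped trees \cite[Proposition 3.2]{Toledo2005}, all already in the paper. This makes the proof self-contained. The cost is that you obtain only the height, whereas the cited construction yields the full isomorphism type of the tree.

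You could shorten your argument further by dropping the CRT and product step. Since \(\pring{\field_q}/\ideal{M_2}\) is cyclic, the height is the least \(t\) with \(M_2 \mid F^t\). That is the least \(t\) with \(\kappa_{G_i}(M_2)\le t\,\kappa_{G_i}(F)\) for every \(i\), which is exactly the stated maximum. This is the \(e=1\) case of part 2 of \cref{the:orbit-1}.
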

\begin{proof}
    The height of a tree associated with \(M_2\) and \(F\) is given by \cite[Proposition 2.14]{Qureshi2015}.
    The closed form of its height is a straightforward generalization of \cite[Proposition 2.4]{Qureshi2015}.
\end{proof}

In \cref{sec:hensel}, we show how to lift \(f\)-decompositions of \glspl{lfds} from finite fields to Galois rings based on a novel extension of Hensel's lifting lemma.

\subsection{Related Work}
\label{subsec:related-work}
Fundamental work in the theory and analysis of \glspl{lfds} over finite fields was done in \cite{Elspas1959}, which was extended and generalized in \cite{Toledo2005}.
Several papers build upon this work and generalize towards finite rings \cite{Xu2009,Mendivil2012,Deng2015,Wei2016,Rohde2024}.
The authors in \cite{Qureshi2015,Panario2018,Qureshi2019} developed a polynomial-based approach for analyzing \glspl{lfds}.

Apart from research on general \glspl{lfds}, numerous studies target more specific forms of \glspl{lfds} or specific application domains.
One branch considers finite linear \glspl{ca} with periodic boundary conditions, corresponding to \glspl{lfds} acting on cyclic modules \cite{Martin1984,Guan1986,Jen1988,Jen1988a,Stevens1999}.
Many of these studies yield results that align with other, more general results presented in papers on \glspl{lfds}, for example, \cite{Stevens1999}.
Another notable field is systems biology, which focuses not only on \glspl{lfds} but also on (nonlinear) polynomial finite dynamical systems \cite{Laubenbacher2004,ColonReyes2006a,Jarrah2008}.
In \cite{Hinkelmann2011}, a software tool called \gls{adam} has been proposed and applied to gene regulatory networks, analyzes the cyclic behavior of polynomial \glspl{fds} using Gröbner bases.

%% file: sections/3_generalized_hensel.tex
\section{\texorpdfstring{\Glspl{lfds}}{LFDSs} on Cyclic \texorpdfstring{\(\pring{R_e}\)-Modules}{Modules Over Galois Rings}}
\label{sec:hensel}
Henceforth, we consider a polynomial \(m\in\pring{R}\) of degree \(\deg(m) = n > 1\) that generates a principal ideal \(I = m\pring{R}\), 
and a monic polynomial \(f \in \pring{R}\) with \(1 \le \deg(f) < n\).
To further simplify notation, we
identify a map \(\Gamma(f,I)\) (or \(\Gamma_e(f,I)\)) with the \gls{lfds} \(\sys(\qring{\pring{R}}{I}, \Gamma(f,I))\) (or \(\sys(\qring{\pring{R_e}}{I}, \Gamma_e(f,I))\)) it defines.
We have that \(\pi_e(f)\) is regular.

For \(e=1\), we already discussed the construction of the \(f\)-decomposition of \(\ideal{m}\) in \(\pring{R_1}\isomorphic\pring{\field_q}\) in \cref{sssec:clfds}.
Our goal is to lift \(f\)-decompositions modulo \(p^i\) to \(f\)-decompositions modulo \(p^{i+1}\) for any \(i \ge 1\).
The following result shows that it is possible to lift an \(f\)-decomposition in \(\pring{R_1}\) to any Galois ring \(\pring{R_e}\) for \(e \ge 1\).

\begin{theorem}
    \label{the:general-hensel}
    Let \(f,m \in \pring{R}\) be polynomials with \(1 \le \deg(f) < \deg(m)\).
    Suppose that there are polynomials \(m_1, m_2, h_1, \alpha, \beta \in \pring{R}\) and \(N \in \intnum^+\) such that
    \begin{equation*}
        \begin{cases}
            m \equiv m_1 m_2 \pmod{p^i},\\
            \alpha f + \beta m_1 \equiv 1 \pmod{p^i},\\
            f^N\equiv h_1 m_2 \pmod{p^i}.
        \end{cases}
    \end{equation*}
    Furthermore, define \(h_2 = h_1^p m_2^{p-1}\).
    Then there are polynomials \(m_1',m_2',h_2',\alpha',\beta' \in \pring{R}\) with \(m_1'\equiv m_1 \pmod{p^i}\), \({m_2' \equiv m_2 \pmod{p^i}}\), \({h_2' \equiv h_2 \pmod{p^i}}\),
    \({\alpha' \equiv \alpha \pmod{p^i}}\), \({\beta' \equiv \beta \pmod{p^i}}\) such that
    \begin{equation*}
        \begin{cases}
            m \equiv m_1' m_2' \pmod{p^{i+1}},\\
            \alpha' f + \beta' m_1' \equiv 1 \pmod{p^{i+1}},\\
            f^{Np} \equiv h_2' m_2' \pmod{p^{i+1}}.
        \end{cases}
    \end{equation*}
\end{theorem}

\begin{proof}
    Let \(a,b,c \in \pring{R}\) such that \(m = m_1 m_2 + a p^i\), \(\alpha f + \beta m_1 = 1 + b p^i\), and \(f^N = h_1 m_2 + c p^i\).
    Moreover, let \(C_j^N = \binom{N}{j}\).
    By the binomial theorem, we have that
    \[(\alpha f + \beta m_1)^N = (1 + b p^i)^N \quad \mbox{if and only if} \quad \alpha^N f^N + \beta_0 m_1 = 1 + b_0 p^i,\]
    where \(\beta_0 = \sum_{j=0}^{N-1} C_j^N \beta^{N-j} (\alpha f)^j m_1^{N-1-j}\) and \(b_0 = \sum_{j=1}^N C_j^N b^j p^{i(j-1)}\).

    Since \(\alpha^N f^N = \alpha^N h_1 m_2 + \alpha^N c p^i\), we have that
    \begin{equation}
        \label{eq:hensel-1}
        \alpha^N h_1 m_2 + \beta_0 m_1 = 1 + (b_0 - \alpha^N c) p^i.
    \end{equation}
    We also have the following congruences
    \[f^{Np}=(f^N)^p = (h_1 m_2 + c p^i)^p \equiv h_1^p m_2^p + p h_1^{p-1} m_2^{p-1} c p^i \equiv h_2 m_2 \pmod{p^{i+1}}.\]
     Write \(m_1' = m_1 + u_1 p^i\), \(m_2' = m_2 + u_2 p^i\), \(h_2' = h_2 + v p^i\) for some \(u_1, u_2, v \in \pring{R}\) .
    Then the following congruences hold
\begin{align}
\label{eq:hensel-2}
m_1' m_2' 
&\equiv m - a p^i + (m_2 u_1 + m_1 u_2) p^i 
\equiv m + (m_2 u_1 + m_1 u_2 - a) p^i 
\pmod{p^{i+1}}, \\
\label{eq:hensel-3}
h_2' m_2' 
&\equiv f^{Np} + (h_2 u_2 + m_2 v) p^i 
\equiv f^{Np} + m_2 (h_1^p m_2^{p-2} u_2 + v) 
\pmod{p^{i+1}}.
\end{align}

    By \cref{eq:hensel-1,eq:hensel-2}, setting \(u_1 = \alpha^N h_1 a\) and \(u_2 = \beta_0 a\), we get \(m_1' m_2' \equiv m \pmod{p^{i+1}}\).
    By \cref{eq:hensel-3}, setting \(v= -h_1^p m_2^{p-2} u_2\), we obtain \(h_2' m_2' \equiv f^{Np} \pmod{p^{i+1}}\).
    We have that \(\alpha f + \beta m_1' = \alpha f + \beta (m_1 + u_1 p^i) = (\alpha f + \beta m_1) + \beta u_1 p^i = 1 + (b + \beta u_1) p^i = 1 + u p^i\),
    where \(u \coloneq b + \beta u_1\).
    Setting \(\alpha' = (1-u p^i)\alpha\) and \(\beta' = (1 - u p^i)\beta\), we obtain
    \[\alpha' f + \beta' m_1' = (1 - u p^i)(\alpha f + \beta m_1') = (1 - u p^i)(1 + u p^i) \equiv 1 \pmod{p^{i+1}}.\]
\end{proof}

We can use \cref{the:general-hensel} to decompose \glspl{lfds} over cyclic \(\pring{R_e}\)-modules for any \(e \ge 1\), as the following result demonstrates.
\begin{corollary}
    \label{cor:deco-nil-bij}
    Let \(f,m \in \pring{R}\) be polynomials with \(1 \le \deg(f) < \deg(m)\).
    Furthermore, let \(M_1, M_2 \in \pring{R_1}\) such that \(m \equiv M_1 \cdot M_2 \pmod{p}\) is an \(f\)-decomposition of \(m\pring{R_1}\),
    that is, \(f\pring{R_1}\) is coprime with \(M_1\), and \(f \in \rad(M_2)\).
    Then for any \(e \ge 1\), there exist \(m_1, m_2 \in \pring{R_e}\) with \(\bar{m}_1 = M_1\), \(\bar{m}_2 = M_2\), and \(m \equiv m_1 \cdot m_2 \pmod{p^e}\) such that
    the \gls{lfds} \(\sys(\qring{\pring{R_e}}{\ideal{m}}, \Gamma_e(f,m))\) admits a decomposition into a product of a nilpotent and a bijective subsystem as follows
    \[\sys(\qring{\pring{R_e}}{\ideal{m}},\Gamma_e(f,m)) \isomorphic \underbrace{\sys(\qring{\pring{R_e}}{\ideal{m_2}},\Gamma_e(f,m_2))}_{\text{nilpotent}} \otimes \underbrace{\sys(\qring{\pring{R_e}}{\ideal{m_1}},\Gamma_e(f,m_1))}_{\text{bijective}}.\]
\end{corollary}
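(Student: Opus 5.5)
We argue by induction on the prime power, using \cref{the:general-hensel} as the inductive step, and then close with the Chinese Remainder Theorem. For the base case \(i=1\), the \(f\)-decomposition \(m\equiv M_1M_2 \pmod p\) gives, since \(\pring{\field_q}\) is a Euclidean domain and \(\gcd(\bar f,M_1)=1\), polynomials \(A,B\in\pring{\field_q}\) with \(A\bar f+BM_1=1\). Moreover, \(\bar f\in\rad(M_2)\) yields some \(N\) and \(H_1\) with \(\bar f^N=H_1M_2\). We lift \(M_1,M_2,A,B,H_1\) arbitrarily to \(m_1,m_2,\alpha,\beta,h_1\in\pring{R}\), so that the three congruences of \cref{the:general-hensel} hold modulo \(p\).

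Applying \cref{the:general-hensel} \(e-1\) times then produces \(m_1,m_2,\alpha,\beta,h\in\pring{R}\) and the exponent \(N' = Np^{e-1}\) with
\[
m\equiv m_1m_2,\qquad \alpha f+\beta m_1\equiv 1,\qquad f^{N'}\equiv h\,m_2 \pmod{p^e}.
\]
At each step the new polynomials are congruent to the old ones modulo \(p^i\), so \(\bar m_1=M_1\) and \(\bar m_2=M_2\) are preserved. After projecting by \(\pi_e\), this says precisely that \(m = m_1m_2\) in \(\pring{R_e}\), that \(f\) is coprime to \(m_1\pring{R_e}\), and that \(f\in\rad(m_2\pring{R_e})\). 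In other words, \(m=m_1m_2\) is an \(f\)-decomposition of \(m\pring{R_e}\).

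Next we need the ideals \(\ideal{m_1}\) and \(\ideal{m_2}\) of \(\pring{R_e}\) to be coprime. Following the argument given after \cref{def:f-deco}, raise \(\alpha f+\beta m_1\equiv1\) to the \(N'\)th power to get \(\alpha^{N'}f^{N'}+\beta_0m_1\equiv1\). Substituting \(f^{N'}\equiv hm_2\) gives
\[
(\alpha^{N'}h)\,m_2+\beta_0 m_1\equiv1 \pmod{p^e}.
\]
\cref{the:crt} then yields \(\qring{\pring{R_e}}{\ideal{m}}\isomorphic \qring{\pring{R_e}}{\ideal{m_1}}\times\qring{\pring{R_e}}{\ideal{m_2}}\). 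This isomorphism is one of \(\pring{R_e}\)-modules, so it intertwines multiplication by \(\pi_e(f)\) with the componentwise map \(\Gamma_e(f,m_1)\times\Gamma_e(f,m_2)\). This gives the claimed equivalence of systems, with the factors ordered as in the statement by commutativity of \(\otimes\) up to isomorphism.

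It remains to check the nilpotent and bijective labels. On \(\qring{\pring{R_e}}{\ideal{m_2}}\), the power \(f^{N'}\) lies in \(\ideal{m_2}\), so \(\Gamma_e(f,m_2)^{N'}=0\) and the map is nilpotent. On \(\qring{\pring{R_e}}{\ideal{m_1}}\), the relation \(\alpha f\equiv1\) modulo \(m_1\) makes \(f\) a unit, so multiplication by \(f\) is invertible, with inverse multiplication by \(\alpha\). The main subtlety lies in the base case and the bookkeeping. \cref{the:general-hensel} multiplies the exponent by \(p\) and changes the cofactor \(h\) at every step, so we must make sure we only ever use the existence of some exponent and some cofactor, never their specific form. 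We also need monicity or regularity of \(m_1,m_2\) only where the quotient modules are meant to be finite of the expected size; the statement does not require this, so we would not pursue it.
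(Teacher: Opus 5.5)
Your proposal is correct and follows the paper's own route. You iterate \cref{the:general-hensel} to carry the B\'ezout relation and the radical relation up to modulus \(p^e\), deduce that \(\ideal{m_1}\) and \(\ideal{m_2}\) are coprime, apply \cref{the:crt}, and read off bijectivity and nilpotency. You also supply two details the paper leaves implicit: the base-case lift from \(\pring{\field_q}\), and the explicit \(N'\)-th power argument giving \((\alpha^{N'}h)\,m_2+\beta_0 m_1\equiv 1 \pmod{p^e}\).
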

\begin{proof}
    Because \(m \equiv M_1 \cdot M_2 \pmod{p}\) is an \(f\)-decomposition of \(m\pring{R_1}\), we have that \(\gcd(M_1, \allowbreak M_2) = 1\).
    By repeated application of \cref{the:general-hensel}, there exist \(b, s \in \pring{R_e}\) with \(\bar{b} = M_1\), \(\bar{s} = M_2\),
    and \(m \equiv b \cdot s \pmod{p^e}\) such that
    \(f\pring{R_e}\) and \(b\) are coprime and \(f \in \rad(s)\).
    Hence, there exist some \(N \in \intnum^+\) and \(\alpha, \beta \in \pring{R_e}\) such that \(f^N \in s\pring{R_e}\) and \(\alpha f + \beta b \equiv 1 \pmod{p^e}\).
    It follows that \(b\) and \(s\) form an \(f\)-decomposition of \(m\pring{R_e}\), such that \(b\pring{R_e}\) and \(s\pring{R_e}\) are coprime.
    By \cref{the:crt}, we have the isomorphism
    \[\qring{\pring{R_e}}{\ideal{m}} \isomorphic \qring{\pring{R_e}}{\ideal{s}} \times \qring{\pring{R_e}}{\ideal{b}}.\]
    This isomorphism allows us to decompose the map \(\Gamma_e(f,m)\), yielding \(\Gamma_e(f,m) \isomorphic \Gamma_e(f,s) \times \Gamma_e(f,b).\)
    It is easy to check that \(\Gamma_e(f,b)\) is bijective and \(\Gamma_e(f,s)\) is nilpotent.
    Setting \(m_1=b\) and \(m_2=s\), we obtain the claimed result.
\end{proof}


In order to analyze the cyclic behavior of \(\Gamma_e(f,m_1)\), our approach is to decompose the bijective system into smaller bijective systems. When considering systems over \(R_1 = \field_q\), this can be accomplished based on a factorization of \(\bar{m}_1\) and then, such decomposition can be lifted to \(R_e\) for any \(e \ge 1\) using \cref{lem:hensel}.

\begin{corollary}
\label{cor:deco-bij}
Suppose that \(\bar{m}_1(x)=G_1(x)\cdots G_r(x),
\) where \(G_i\in \pring{R_1}\), \(1\le i\le r\), are pairwise coprime polynomials.
Then the bijective \glspl{lfds} defined by \(\Gamma_1(f,m_1)\) decomposes as
\[
\Gamma_1(f,m_1) \isomorphic \bigotimes_{i=1}^r \Gamma_1(f,G_i).
\]

Moreover, for each \(e>1\), there exist pairwise coprime polynomials
\(g_1,\dots,g_r\in \pring{R_e}\) such that \(\bar{g}_i=G_i\) for \(1\le i\le r\) and 
\(m_1=g_1 g_2 \cdots g_r\) .
In this case, the bijective \gls{lfds} defined by \(\Gamma_e(f,m_1)\) decomposes as
\[
\Gamma_e(f,m_1) \isomorphic \bigotimes_{i=1}^r \Gamma_e(f,g_i).
\]
\end{corollary}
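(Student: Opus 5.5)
The first statement is a direct application of \cref{the:crt} to \(S=\pring{\field_q}\). Since the \(G_i\) are pairwise coprime in the Euclidean domain \(\pring{\field_q}\), the ideals \(\ideal{G_i}\) are pairwise comaximal, and their product is \(\ideal{\bar m_1}\). So \(\qring{\pring{\field_q}}{\ideal{\bar m_1}}\isomorphic\prod_i \qring{\pring{\field_q}}{\ideal{G_i}}\). The isomorphism is the reduction map \(g\mapsto (g \bmod G_1,\dots,g\bmod G_r)\), which is a ring homomorphism. It therefore commutes with multiplication by \(\bar f\), and this gives the conjugacy \(\Gamma_1(f,m_1)\isomorphic\bigotimes_i\Gamma_1(f,G_i)\) in the sense of \cref{def:notation-sum-product}. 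Bijectivity of each factor follows because \(\bar f\) is coprime to \(\bar m_1\), hence to each \(G_i\).

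For \(e>1\), I lift the factorization by induction on the number of factors using \cref{lem:hensel}. First I make the \(G_i\) monic. The polynomial \(m_1\) from \cref{cor:deco-nil-bij} can be taken monic, or at least its leading coefficient is a unit, after absorbing a unit as in \cref{eq:regular-poly}. Absorbing units of \(\field_q\) into one factor, I may then assume \(G_2,\dots,G_r\) are monic. I apply \cref{lem:hensel} with \(S=R_e\) (or \(S=\pring{R}\)-coefficients, working over \(R\)), \(m=p\), \(k=e\), \(g=\) a lift of \(G_1\), and \(h=\) a monic lift of \(G_2\cdots G_r\). The required Bézout relation \(sg+th\equiv 1\pmod p\) with \(\deg s<\deg h\), \(\deg t<\deg g\) exists because \(\gcd(G_1,G_2\cdots G_r)=1\) in \(\pring{\field_q}\); reducing the Bézout coefficients modulo \(h\) and \(g\) gives the degree bounds. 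This yields \(m_1\equiv g_1 h'\pmod{p^e}\) with \(\bar g_1=G_1\) and \(\bar h'=G_2\cdots G_r\). I then repeat the argument on \(h'\), which is monic with the coprime factorization \(G_2\cdots G_r\), and obtain \(g_2,\dots,g_r\) with \(m_1=g_1\cdots g_r\) in \(\pring{R_e}\).

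The main obstacle is showing that the lifted factors are pairwise coprime in \(\pring{R_e}\), i.e.\ \(g_i\pring{R_e}+g_j\pring{R_e}=\pring{R_e}\), since \(\pring{R_e}\) is not a PID. I handle it with \cref{lem:lifting-Bezout} applied in \(S=\pring{R}\). Since \(\gcd(G_i,G_j)=1\) over \(\field_q\), there are \(x,y\) with \(xg_i-yg_j\equiv1\pmod p\). Iterating the lemma gives \(x',y'\) with \(x'g_i-y'g_j\equiv 1\pmod{p^e}\), which is exactly coprimality in \(\pring{R_e}\). The same argument shows that \(\prod_{i\ne j}g_i\) is coprime to \(g_j\), so \cref{the:crt} applies in \(S=\pring{R_e}\). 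This yields \(\qring{\pring{R_e}}{\ideal{m_1}}\isomorphic\prod_i\qring{\pring{R_e}}{\ideal{g_i}}\), again via reduction maps compatible with multiplication by \(\pi_e(f)\). Hence \(\Gamma_e(f,m_1)\isomorphic\bigotimes_i\Gamma_e(f,g_i)\), and each factor is bijective because \(f\) is coprime to \(m_1\) and thus to each \(g_i\).
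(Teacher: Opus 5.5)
Your proposal is correct and follows the same route as the paper. The first part uses the Chinese Remainder Theorem in $\pring{\field_q}$, and the second uses Hensel lifting (\cref{lem:hensel}) of the coprime factorization of $\bar m_1$ to $R_e$, followed by the Chinese Remainder Theorem in $\pring{R_e}$. You also make explicit two points the paper leaves implicit: normalizing the leading coefficient of $m_1$ (via \cref{eq:regular-poly}) so that \cref{lem:hensel} applies, and checking pairwise coprimality of the lifted factors via \cref{lem:lifting-Bezout}.
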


\begin{proof}
For the first part, note that the Chinese Remainder Theorem yields the isomorphism
\[
\qring{\pring{R_1}}{\ideal{m_1}} \cong 
\qring{\pring{R_1}}{\ideal{G_1}} \times \cdots \times 
\qring{\pring{R_1}}{\ideal{G_r}}.
\]
As a consequence, the map \(\Gamma_1(f,m_1)\) decomposes as
\[
\Gamma_1(f,m_1) =
\Gamma_1(f,G_1) \times \cdots \times \Gamma_1(f,G_r).
\]
Since \(\Gamma_1(f,m_1)\) is bijective, each map \(\Gamma_1(f,G_i)\) is also bijective.

For the second part, the existence of pairwise coprime polynomials 
\(g_i \in \pring{R_e}\) is guaranteed by applying Hensel's lemma to the factorization over the base field. 
Again, by the Chinese Remainder Theorem we obtain
\[
\qring{\pring{R_e}}{\ideal{m_1}} \cong 
\qring{\pring{R_e}}{\ideal{g_1}} \times \cdots \times 
\qring{\pring{R_e}}{\ideal{g_r}}.
\]
Via this isomorphism, we obtain a decomposition
\[
\Gamma_e(f,m_1) =
\Gamma_e(f,g_1) \times \cdots \times \Gamma_e(f,g_r),
\]
which yields the claimed decomposition of the \gls{lfds}.
\end{proof}

Based on the above results, each bijective \gls{lfds} defined by \(\Gamma_e(f,m_1)\) can be written as a product of smaller bijective systems \(\Gamma_e(f,g_i)\).


It is clear that the functional graph \(\fgraph(\Gamma_e(f,m_2))\) of the nilpotent subsystem in \cref{cor:deco-nil-bij} consists of a looped tree \(\mathring{\mathcal{T}}\) rooted at the zero fixed point such that
\(\mathcal{T}\) is isomorphic to the trees appearing in the connected components of \(\fgraph(\Gamma_e(f,m))\).
On the other hand, \(\fgraph(\Gamma_e(f,m_1))\) is a disjoint union of cycles that coincide with those in \(\fgraph(\Gamma_e(f,m))\).

In the following sections, we construct \(\cset(\Gamma_e(f,m))\) based on a factorization of \(m_1\) (see \cref{sec:cycles}),
and compute the height of the tree defined by \(m_2\) (see \cref{sec:height}).
In both cases, the orbit of \(1\) plays a crucial role, which is shown in the following result.

\begin{theorem}
    \label{the:orbit-1}
    Let \(f,m \in \pring{R}\) be polynomials with \(1 \le \deg(f) < \deg(m)\).
    Then the following holds:
    \begin{enumerate}
        \item For any \(\ell \in \cset(\Gamma_e(f,m))\), it holds that \(\ell\) divides \(\per(\Gamma_e(f,m),1)\).
        \item The height \(h\) of the system \(\Gamma_e(f,m)\) is given by \(h = \pper(\Gamma_e(f,m), 1)\).
    \end{enumerate}
\end{theorem}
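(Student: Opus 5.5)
The plan is to exploit the fact that every state of the system is a multiple of the state \(1\). Write \(S_e = \qring{\pring{R_e}}{\ideal{m}}\) and let \(\Gamma = \Gamma_e(f,m)\). Then \(\Gamma^k(g) = \pi_e(f)^k g = g\cdot \Gamma^k(1)\) for every \(g \in S_e\) and \(k \ge 0\). Set \(\tau = \pper(\Gamma,1)\) and \(L = \per(\Gamma,1)\). By definition these satisfy \(f^{\tau+L} \equiv f^{\tau} \pmod{m}\) over \(R_e\). This single congruence, multiplied by an arbitrary \(g\), controls the orbit of every state.

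\emph{Part 1.} Multiplying the congruence by \(g\) gives \(\Gamma^{\tau+L}(g) = \Gamma^{\tau}(g)\) for all \(g\). So \(\Gamma^\tau(g)\) lies on a cycle, and \(\Gamma^L\) fixes that point. Hence the cycle length \(\per(\Gamma,g)\), which is the least positive \(k\) with \(\Gamma^{\tau'+k}(g)=\Gamma^{\tau'}(g)\) for \(\tau'=\pper(\Gamma,g)\), divides \(L\). I will use the standard fact that on a cycle of length \(\ell\), a shift \(\Gamma^L\) fixes a point if and only if \(\ell \mid L\). Every \(\ell \in \cset(\Gamma)\) is \(\per(\Gamma,g)\) for some \(g\), so the claim follows.

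\emph{Part 2.} I will show \(\pper(\Gamma,g) \le \tau\) for all \(g\), with equality at \(g=1\).
\begin{itemize}
\item The upper bound is immediate from the previous paragraph: \(\Gamma^\tau(g)\) is periodic, so the preperiod of \(g\) is at most \(\tau\). The case \(g=1\) attains it.
\item Next, the height of the system must be identified with \(\max_g \pper(\Gamma,g)\). Each connected component is a cycle whose vertices are roots of trees. The height of a tree is the longest path from a leaf to its root, and for a vertex \(g\) that path length is exactly the number of steps until \(g\) first hits a cycle vertex, namely \(\pper(\Gamma,g)\).
\item It follows that the system height equals \(\max_g \pper(\Gamma,g) = \tau\). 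If the orbit of \(1\) starts at a non-leaf, the maximum is still attained at some leaf above \(1\), and its preperiod is also at most \(\tau\) by the upper bound.
\end{itemize}
So \(h=\tau\).

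The main obstacle is making the identification of tree height with maximal preperiod rigorous, since the paper's definitions of trees and height are somewhat informal. I would handle it by noting that every vertex lies in some tree hanging from a cycle vertex. The distance from a vertex to its root equals its preperiod. The maximum of this distance over a tree is attained at a leaf, because any non-leaf has a predecessor whose distance is one larger. Everything else rests only on commutativity of \(\pring{R_e}\) and the fact that \(\Gamma\) is multiplication by \(\pi_e(f)\).
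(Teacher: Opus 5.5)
Your proposal is correct and follows essentially the same route as the paper: multiply the congruence \(f^{\tau+L}\equiv f^{\tau}\) by an arbitrary state \(g\), then use minimality to get \(\per(\Gamma,g)\mid L\) and \(\pper(\Gamma,g)\le\tau\), with equality at \(g=1\). Your explicit arguments, namely that \(\Gamma^L\) fixing a cycle point forces divisibility and that the system height equals the maximal preperiod, fill in details the paper leaves implicit.
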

\begin{proof}
    Let \(\tau_1 = \pper(\Gamma_e(f,m), 1)\) and \(\ell_1 = \per(\Gamma_e(f,m), 1)\).
    Then we have
    \[f^{\tau_1 + \ell_1} \equiv f^{\tau_1} \pmod{(m, p^e)}.\]
    Multiplying both sides by some \(g \in \pring{R}\), we obtain
    \[f^{\tau_1 + \ell_1} \cdot g \equiv f^{\tau_1} \cdot g \pmod{(m, p^e)}.\]
    If \(\tau = \pper(\Gamma_e(f,m), g)\) and \(\ell = \per(\Gamma_e(f,m), g)\), then by the principle of minimality of \(\tau\) and \(\ell\) it follows that
    \begin{enumerate}
        \item \(\ell \mid \ell_1\), and
        \item \(\tau \le \tau_1\), leading to \(h = \tau_1\).
    \end{enumerate}
\end{proof}

%% file: sections/4_cycle_lengths.tex
\section{Bijective \texorpdfstring{\glspl{lfds}}{LFDSs} on Cyclic \texorpdfstring{\(\pring{R_e}\)-Modules}{Modules Over Galois Rings}}
\label{sec:cycles}

Fix \(f, m \in\pring{R}\) with $m$ monic, \(1\leq \deg(f)<\deg(m)\), and $e\in \mathbb{Z}^+$. We return to the construction of \(\cset(\Gamma_e(f,m))\).
Let \(m \equiv m_1 \cdot m_2 \pmod{p^e}\) be the \(f\)-decomposition of \(m\) in \(\pring{R_e}\) (identifying \(f\) with its image in \(\pring{R_e}\)), that is, $m_1,m_2 \in \pring{R_e}$ such that
\(f\) is coprime to \(m_1\) and \(f \in \rad(m_2)\) as in \cref{cor:deco-nil-bij}.
Throughout this section, we focus on the bijective subsystem defined by \(\Gamma_e(f,m_1)\), because by the above discussion, we have that
\(\cset(\Gamma_e(f,m)) = \cset(\Gamma_e(f,m_1))\).

First, let us generalize \cref{the:order-x-pow}.
\begin{theorem}
    \label{the:order-pow}
    Let \(f,g \in \pring{R}\) be polynomials such that \(\bar{g}\) is irreducible and \(\gcd(\bar{f}, \bar{g})=1\), and \(t\in\intnum^+\).
    Furthermore, let \(s\) be the greatest integer satisfying \(\bar{g}^s \mid \bar{f}^{\order(\bar{f},\bar{g})} - 1\).
    Then \(\order(\bar{f},\bar{g}^t) = p^k \cdot \order(\bar{f},\bar{g})\), where \(k\) is the smallest integer satisfying \(t/s \le p^k\).
\end{theorem}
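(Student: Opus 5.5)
Write \(F=\bar f\), \(G=\bar g\) and \(\ell=\order(F,G)\). Everything happens in \(\pring{\field_q}\), which is a \gls{ufd}, so divisibility by powers of the irreducible \(G\) is controlled by valuations. We have \(\gcd(F,G)=1\), hence \(\gcd(F,G^t)=1\) and \(\order(F,G^t)\) is well defined. Note that \(s\ge 1\) by definition of \(\ell\), and \(s<\infty\) provided \(F^\ell\neq 1\). The degenerate case \(F^{\ell}=1\) would formally make \(s=\infty\) and \(k=0\); there \(\order(F,G^t)\) divides \(\ell\) and is a multiple of \(\ell\), so it equals \(\ell\) and the claim holds. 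Below we assume \(F^\ell\ne 1\).

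\textbf{Reduction to multiples of \(\ell\).} Since \(G^t\mid F^{N}-1\) implies \(G\mid F^N-1\), the order \(L:=\order(F,G^t)\) is a multiple of \(\ell\). Write \(L=\ell j\) and put \(H:=F^\ell = 1+G^sU\) with \(G\nmid U\). Then \(L=\ell\cdot\order(H,G^t)\), where \(\order(H,G^t)\) is the least \(j\ge1\) with \(G^t\mid H^j-1\). So it suffices to compute \(v_G(H^j-1)\) for every \(j\), where \(v_G\) denotes the exponent of \(G\).

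\textbf{Valuation computation.} Write \(j=p^a c\) with \(p\nmid c\). In characteristic \(p\) the Frobenius gives
\[H^{p^a}=1+G^{sp^a}U^{p^a}.\]
Set \(W:=G^{sp^a}U^{p^a}\). Then
\[H^j-1=(1+W)^c-1=W\bigl(c+\tbinom{c}{2}W+\cdots\bigr).\]
The bracket is congruent to \(c\not\equiv 0\) modulo \(G\), because \(c\) is a unit in \(\field_q\) and \(G\mid W\). Hence \(v_G(H^j-1)=sp^a\), using \(G\nmid U^{p^a}\), which holds since \(G\) is irreducible and so prime.

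Therefore \(G^t\mid H^j-1\) if and only if \(sp^a\ge t\), that is, \(p^a\ge t/s\). The least such \(j\) is \(j=p^k\) with \(k\) minimal subject to \(t/s\le p^k\). This gives \(\order(F,G^t)=p^k\ell\), as claimed.

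The only step needing care is the valuation computation. In particular, one must check that the factor \(c\) coprime to \(p\) does not raise the valuation, and that Frobenius lifts the exponent exactly, with no cancellation. Both follow from unique factorization and characteristic \(p\). This argument also recovers \cref{the:order-x-pow} as the special case \(f=x\).
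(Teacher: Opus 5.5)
Your proof is correct, but it takes a different route from the paper's.

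\textbf{The paper's route.} The paper inducts on \(k\). After the base case \(t\le s\), it places the order in the divisibility chain \(p^{k-1}\ell=\order(F,G^{p^{k-1}s})\mid\order(F,G^t)\mid\order(F,G^{p^ks})\mid p^k\ell\). Since \(p\) is prime, only \(p^{k-1}\ell\) and \(p^k\ell\) remain as candidates. It then rules out \(p^{k-1}\ell\) using the Frobenius identity \(F^{p^{k-1}\ell}=1+G^{p^{k-1}s}U^{p^{k-1}}\) with \(G\nmid U\) and \(t>p^{k-1}s\).

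\textbf{Your route.} You compute the exact valuation \(v_G(F^{\ell j}-1)=sp^a\) for every \(j\ge 1\), where \(p^a\) is the exact power of \(p\) dividing \(j\). This costs you the extra binomial step showing that the prime-to-\(p\) cofactor \(c\) does not raise the valuation. In return it removes the induction and gives strictly more information. You obtain the set of all \(N\) with \(G^t\mid F^N-1\), not just the least one; this is a lifting-the-exponent lemma in \(\pring{\field_q}\).

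\textbf{Combining the two.} Since \(H^{p^k}=1+G^{sp^k}U^{p^k}\) and \(t\le sp^k\), \(\order(H,G^t)\) divides \(p^k\). So only \(p\)-powers need to be examined, and your \(c\)-computation becomes unnecessary. This is essentially the paper's sandwich argument without the induction.

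\textbf{Two side remarks.} Your explicit treatment of \(F^\ell=1\) covers a case the paper leaves implicit; it cannot occur when \(\deg\bar f\ge 1\). Your closing claim that this recovers \cref{the:order-x-pow} needs one more observation, namely that \(s=1\) when \(f=x\). This holds because \(\ell\mid q^{\deg G}-1\) forces \(p\nmid\ell\), so \(x^\ell-1\) is separable.
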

\begin{proof}
    Let \(F=\bar{f}\), \(G=\bar{g}\), and \(\ell = \order(F,G)\). 
    First, consider the case \(k=0\).
    Then  \(t\le s\), and
    \[F^\ell\equiv1\pmod{G^t},\]
    resulting in \(\order(F,G^t)\mid\ell\).
    On the other hand, from
    \[F^{\order(F,G^t)}\equiv 1 \pmod{G^t}\]
    it follows that \[F^{\order(F,G^t)}\equiv1\pmod{G},\]
    leading to \(\ell \mid \order(F,G^t)\).
    Since \(\order(F,G^t) \mid \ell\) and \(\ell \mid \order(F,G^t)\), it directly follows that \(\order(F,G^t) = \ell\).

    Now, consider the case \(k\ge1\) and let \(H \in \pring{\field_q}\) with \(G \nmid H\) satisfying
    \begin{equation}
        \label{eq:order-pow-1}
        F^\ell = G^s \cdot H + 1.
    \end{equation}
    Suppose that \(\order(F,G^i) = p^{k-1} \cdot \ell\) for every \(i\) such that
    \(k-1 = \min\{j \in \natnum: i/s \le p^j\}.\)
    In particular,
    \begin{equation}
        \label{eq:order-pow-2}
        \order(F,G^{p^{k-1} \cdot s})=p^{k-1} \cdot \ell.
    \end{equation}
    Let \(t\in\natnum\) such that
    \(k=\min\{j\in\natnum:t/s \le p^j\},\)
    or equivalently, \(p^{k-1} \cdot s < t \le p^k \cdot s\).
    An application of \cref{eq:order-pow-1} yields that
    \begin{equation}
        \label{eq:order-pow-3}
        F^{p^k \cdot \ell} = G^{p^k \cdot s} \cdot H^{p^k} + 1,
    \end{equation}
    which implies
    \[F^{p^k \cdot \ell} \equiv 1 \pmod{G^{p^k \cdot s}},\]
    resulting in \(\order(F,G^{p^k \cdot s}) \mid p^k \cdot \ell\)
    Considering \cref{eq:order-pow-2}, we obtain the chain of divisibility:
    \[p^{k-1} \cdot \ell = \order(F,G^{p^{k-1}\cdot s}) \mid \order(F,G^t) \mid \order(F,G^{p^k \cdot s}) \mid p^k \cdot \ell.\]
    Hence, \(\order(F,G^t) = p^{k-1} \cdot \ell\) or \(\order(F,G^t) = p^k \cdot \ell\).
    However, based on \cref{eq:order-pow-3} we have that
    \[F^{p^{k-1} \cdot \ell} = G^{p^{k-1} \cdot s} \cdot H^{p^{k-1}} + 1\]
    with \(G \nmid H\), resulting in
    \[F^{p^{k-1} \cdot \ell} \not\equiv 1 \pmod{G^t},\]
    because \(t>p^{k-1} \cdot s\).
    Therefore, it follows that \(\order(F,G^t) = p^k \cdot \ell\).
\end{proof}

By Corollary \ref{cor:deco-bij}, the bijective system \(\Gamma_e(f,m_1) \) decomposes into smaller bijective systems \(\Gamma_e(f,g_i) \) for some polynomials \(g_i \mid m_1\). We now show that the largest cycle in the dynamics of \(\Gamma_e(f,g_i)\) is precisely the order \(\order_e(f,g_i)\).


\begin{lemma}
    \label{lem:order-period-1}
    Let \(g \in \pring{R_e}\) be a divisor of \(m_1\), and let \(z \in \pring{R_e}\) be a polynomial coprime with \(g\).
    Then \(\per(\Gamma_e(f,g), z) = \order_e(f,g)\). In particular, \(\per(\Gamma_e(f,g), 1) = \order_e(f,g)\).
\end{lemma}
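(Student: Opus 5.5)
The plan is to exploit bijectivity and the invertibility of \(z\) modulo \(g\). The first step is to show that \(\Gamma_e(f,g)\) is bijective, so every state is periodic. Since \(g\) divides \(m_1\) and \(f\) is coprime to \(m_1\) (i.e. \(\alpha f+\beta m_1\equiv 1 \pmod{p^e}\) for some \(\alpha,\beta\)), reducing modulo \(g\) gives \(\alpha f\equiv 1 \pmod{g}\). Hence \(f\) is a unit in \(\qring{\pring{R_e}}{\ideal{g}}\). This has two consequences. First, multiplication by \(f\) is a bijection of this finite ring, so every orbit is purely periodic and \(\pper(\Gamma_e(f,g),z)=0\). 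Second, \(\order_e(f,g)\) is well defined: the powers of \(f\) lie in the finite unit group, so some positive power equals \(1\) modulo \(g\).

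With this in place, \(\per(\Gamma_e(f,g),z)\) is simply the least \(k\ge 1\) with \(f^k z\equiv z \pmod{g}\) in \(\pring{R_e}\).

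Next, use the hypothesis that \(z\) is coprime with \(g\). This means there exist \(u,v\in\pring{R_e}\) with \(uz+vg=1\), so \(z\) is a unit modulo \(g\) with inverse \(u\). For any \(k\ge 1\) we then have an equivalence. Multiplying \(f^kz\equiv z\) by \(u\) gives \(f^k\equiv 1 \pmod{g}\). Conversely, multiplying \(f^k\equiv 1\) by \(z\) gives \(f^kz\equiv z\).

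Therefore the set of \(k\ge1\) fixing \(z\) coincides with the set of \(k\ge 1\) satisfying \(f^k\equiv 1\pmod g\). Their minima agree, so \(\per(\Gamma_e(f,g),z)=\order_e(f,g)\) by \cref{def:order}. The special case \(z=1\) is immediate, since \(1\cdot 1+0\cdot g=1\) shows that \(1\) is coprime with \(g\).

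No step is a real obstacle. The only point needing care is the first one: checking that the coprimality of \(f\) with \(m_1\) (from the \(f\)-decomposition in \cref{cor:deco-nil-bij}) descends to the divisor \(g\). Reducing the Bézout identity modulo \(g\) settles this, and it also justifies that \(\order_e(f,g)\) exists and that the period is attained with zero preperiod.
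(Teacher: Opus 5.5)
Your proof is correct and follows essentially the same route as the paper. In both, bijectivity of \(\Gamma_e(f,g)\) makes the period the least \(k\ge 1\) with \(f^k z\equiv z\), and invertibility of \(z\) modulo \(g\) turns this into \(f^k\equiv 1 \pmod{(g,p^e)}\). The only difference is that you derive bijectivity and the existence of \(\order_e(f,g)\) explicitly from the B\'ezout identity for \(f\) and \(m_1\), whereas the paper simply recalls bijectivity and first treats \(z=1\) (citing \cref{the:orbit-1}) before transferring the result to general \(z\).
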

\begin{proof}
    Recall that \(\Gamma_e(f,g)\) is bijective and let \(\ell = \per(\Gamma_e(f,g), 1)\).
    From \cref{the:orbit-1} we obtain that \(\ell\) is the largest cycle length in the dynamics of \(\Gamma_e(f,g)\), and we have \(f^{\ell} \equiv 1 \pmod{(g, p^e)}\).
    By the principle of minimality of \(\ell\), we conclude that \(\order_e(f,g) = \ell\). Since \(z\) is coprime with \(g\) in \(\pring{R_e}\), we have that \(f^{i} \equiv 1 \pmod{(g, p^e)}\) if and only if \(f^{i} z \equiv z \pmod{(g, p^e)}\). Thus, \(\per(\Gamma_e(f,g), z)=\ell\).
\end{proof}

Suppose that the factorization of \(m_1\) into primary and mutually coprime polynomials is given as
\[m_1(x) = g_1(x) \cdots g_r(x),\]
such that \(\bar{g}_i = G_i(x)^{k_i}\), where the polynomials \(G_i \in \pring{R_1}\) are irreducible.
By \cref{lem:cset-base,the:qureshi}, \(\cset(\Gamma_1(f,G_i))\) is of the form
\[\cset(\Gamma_1(f,\bar{g}_i)) = \{1, \ell, p\ell, \dots, p^\tau\ell\},\]
where \(\ell = \order(\bar{f},G_i)\) and \(\tau\) is specified by \cref{the:order-pow}.
Moreover, by \cref{the:omega}, we have that
\[\cset(\Gamma_e(f,g_i)) = \{1, \ell, p\ell, \dots, p^{\tau'}\ell\},\]
where \(\tau' = \tau + \kappa\) for some \(0 \le \kappa < e\).
Hence, if \(\varepsilon \in \intnum^+\) with \(1 \le \varepsilon \le e\), then we obtain \(\cset(\Gamma_\varepsilon(f,g_i)) \subseteq \cset(\Gamma_e(f,g_i))\).
The system \(\Gamma_e(f,m_1)\) is the product of the individually lifted systems \(\Gamma_e(f, g_i)\), which immediately gives that
\(\cset(\Gamma_\varepsilon(f,m_1)) \subseteq \cset(\Gamma_e(f,m_1))\).
In fact, the following result shows that the dynamics of an \gls{lfds} \(\Gamma_\varepsilon(f,m_1)\) are embedded in the dynamics of \(\Gamma_e(f,m_1)\).
\begin{lemma}
    \label{lem:z-lift}
  Let \(\varepsilon \in \intnum^+\) satisfy \(1 \le \varepsilon \le e\), and let \(k = e - \varepsilon\).
    Then the dynamics of \(\Gamma_{\varepsilon}(f,m)\) embeds into the dynamics of \(\Gamma_{e}(f,m)\), i.e. there exists an injective homomorphism between their associated functional graphs. In particular, for every \(z \in  \qring{R_{\varepsilon}[x]}{(m)} = \qring{R[x]}{(m,p^\varepsilon)} \), the period and preperiod are preserved, that is,
    \[
    \per(\Gamma_{\varepsilon}(f,m), z) = \per(\Gamma_{e}(f,m), p^k z)
    \quad \text{and} \quad
    \pper(\Gamma_{\varepsilon}(f,m),z) = \pper(\Gamma_{e}(f,m), p^k z).
    \]
\end{lemma}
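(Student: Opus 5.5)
The plan is to build the embedding explicitly as the multiplication-by-\(p^k\) map
\[
\iota: \qring{R[x]}{(m,p^\varepsilon)} \to \qring{R[x]}{(m,p^e)}, \qquad z \mapsto p^k z,
\]
and to show it is a well-defined, injective map that commutes with the two system maps. Once that is in place, the statements about periods and preperiods follow formally, because an injective map that intertwines the dynamics carries orbits onto orbits bijectively.

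\textbf{Well-definedness and commutation.} If \(z \equiv z' \pmod{(m,p^\varepsilon)}\), write \(z - z' = am + bp^\varepsilon\) with \(a,b \in R[x]\). Then
\[
p^k z - p^k z' = p^k a\, m + b\, p^e,
\]
which lies in \((m,p^e)\), so \(\iota\) is well defined. It is additive, and it satisfies \(\iota(fz) = f\,\iota(z)\), i.e.
\[
\iota \circ \Gamma_\varepsilon(f,m) = \Gamma_e(f,m) \circ \iota .
\]

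\textbf{Injectivity (the main obstacle).} Suppose \(p^k z \in (m,p^e)\), say \(p^k z = am + bp^e\). We need \(z \in (m,p^\varepsilon)\). The tool is that \(m\) is monic, so division with remainder by \(m\) works in \(R[x]\).

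1. Divide both \(z\) and \(a\) by \(m\), absorbing the multiples of \(m\) into the \(a\)-term. This lets us assume \(\deg z < \deg m\).
2. Divide \(b\) by \(m\) as well: \(b = b_0 m + r\) with \(\deg r < \deg m\). Substituting gives
\[
p^k z - p^e r = (a + p^e b_0)\, m .
\]
3. The left side has degree less than \(\deg m\), while a nonzero multiple of the monic \(m\) has degree at least \(\deg m\). Hence both sides vanish, so \(p^k z = p^e r\) exactly in \(R[x]\).
4. Cancelling \(p^k\) uses that \(R[x]\) is torsion-free, i.e. \cref{lem:Z-free-modules} applied to the coefficients in \(R\). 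This gives \(z = p^\varepsilon r\), so \(z \in (m,p^\varepsilon)\).

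\textbf{From the embedding to periods and preperiods.} Since \(\iota\) is injective and intertwines the dynamics, for all \(i,j \ge 0\),
\[
\Gamma_\varepsilon^{i}(z) = \Gamma_\varepsilon^{j}(z) \iff \Gamma_e^{i}(p^k z) = \Gamma_e^{j}(p^k z).
\]
The period and preperiod are the minimal \(\ell\) and \(\tau\) with \(\Gamma^{\tau+\ell} = \Gamma^{\tau}\) at the given point, so they agree for \(z\) and for \(p^k z\).

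Finally, \(\iota\) sends edges \((z, fz)\) of \(\fgraph(\Gamma_\varepsilon(f,m))\) to edges \((p^k z, f p^k z)\) of \(\fgraph(\Gamma_e(f,m))\). It is therefore an injective graph homomorphism, which is the claimed embedding.
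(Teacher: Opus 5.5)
Your proposal is correct and follows the paper's proof. It uses the same map \(z \mapsto p^k z\), the same well-definedness and intertwining check, and the same injectivity mechanism: cancel \(p^k\) using that \(m\) is monic and \(R\) is torsion-free. The paper cancels \(p^k\) directly in \(\qring{\pring{R}}{\ideal{m}}\) by citing \cref{lem:Z-free-modules}. You instead first use division with remainder by the monic \(m\) to get the exact identity \(p^k z = p^e r\) in \(\pring{R}\), and cancel there; you also spell out the step from injectivity plus commutation to equality of periods and preperiods, which the paper leaves implicit.
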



\begin{proof}

Note that for $z_1,z_2 \in R[x]$, the congruence \(z_1\equiv z_2 \pmod{m,p^\varepsilon}\) implies \(p^k z_1\equiv p^k z_2 \pmod{m,p^e}\). Thus, the linear map \(\varphi: \qring{R[x]}{(m,p^\varepsilon)} \to  \qring{R[x]}{(m,p^e)} \) given by \(\varphi(z)=p^k z\) is well defined. Moreover, $\Gamma_e(f,m)(\varphi(z)) = f\cdot (p^k\cdot z) = p^k (\cdot f \cdot z) = \varphi (\Gamma_\varepsilon(f,m)(z))$. Thus, the following diagram commutes:

\[
\begin{tikzcd}
\qring{\pring{R}}{(m,p^\varepsilon)} \arrow[r, "\varphi"] \arrow[d, "{\Gamma_\varepsilon(f,m)}"']
& \qring{\pring{R}}{(m,p^e)} \arrow[d, "{\Gamma_e(f,m)}"] \\
\qring{\pring{R}}{(m,p^\varepsilon)} \arrow[r, "\varphi"]
& \qring{\pring{R}}{(m,p^e)}
\end{tikzcd}
\]

Since \(\varphi\) is well defined and commutes with the maps, it suffices to show that \(\varphi\) is injective in order to obtain the desired embedding. Let $z \in R[x]$ satisfy \(p^k z \equiv 0 \pmod{m,p^e} \). Equivalently, \(  p^k z(x) = a(x) m(x) + p^e b(x)\) for some \(a,b \in R[x]\). Then, we have

\begin{equation}
    \label{eq:congruence_modm}
    p^k ( z-p^\varepsilon b) \equiv 0 \pmod{m}.
\end{equation}

By Lemma \ref{lem:Z-free-modules}, since \(R\) is torsion-free as \(\mathbb{Z}\)-module and \( m \in \pring{R}\) is monic, \( \qring{\pring{R}}{(m(x))} \) is also torsion-free. Thus, we can cancel the factor \(p^k\) in the congruence \ref{eq:congruence_modm} to obtain \(z-p^\varepsilon b \equiv 0 \pmod{m}\), and hence \(z\equiv 0 \pmod{m,p^\varepsilon}\). Then \(\varphi\) has trivial kernel, and hence it is injective.

\end{proof}

A possible approach to compute \(\cset(\Gamma_e(f,m))\) could be as follows:
\begin{enumerate}
    \item Compute the \(f\)-decomposition of \(m\) in \(R_1\), resulting in \(m \equiv M_1 \cdot M_2 \pmod{p}\).
    \item Factor \(M_1(x) = \prod_{i=1}^r G_i(x)^{k_i}\) into irreducible and mutually coprime polynomials \(G_i \in \pring{R_1}\) with \(k_i > 0\).
    \item Lift the factorization of \(M_1\) to \(R_e\), such that \(m_1(x) = g_1(x) \cdots g_r(x)\) with \(\bar{g}_i = G_i^{k_i}\).
    \item Determine \(\fgraph(\Gamma_e(f,g_i))\), \(i=1,\dots,r\)
    \item Compute the product \(\fgraph(\Gamma_e(f,m_1)) \isomorphic \bigotimes_{i=1}^r \fgraph(\Gamma_e(f,g_i))\), which contains information about \(\cset(\Gamma_e(f,m))\).
\end{enumerate}
However, several aspects have to be considered:
\begin{enumerate}
    \item In this work, we are only interested in the cycle lengths that appear in \(\fgraph(\Gamma_e(f,m))\).
    Computing the product of the individual systems \(\fgraph(\Gamma_e(f,g_i))\) would require us to also properly consider the counts of the cycles, adding complexity
    to the overall analysis.
    \item The factorization of \(M_1\) has to be lifted to a possibly large power of \(p\), again increasing the complexity.
\end{enumerate}

To address the first problem, we introduce the \(\lcm\)-product.
\begin{definition}
    \label{def:lcm-product}
    \emph{(\(\lcm\)-Product)}
    Let \(\set{X}\) and \(\set{Y}\) be two sets.
    We define the \(\lcm\)-product of \(\set{X}\) and \(\set{Y}\) as follows
    \[\set{X} \odot \set{Y} = \{\lcm(x,y) : x \in \set{X}, y \in \set{Y}\}.\]
\end{definition}
An \(\lcm\)-product of a series of sets is denoted by \(\bigodot\).
We observe that it is easy to verify that \(\odot\) is commutative and distributive over the set union operation.
That is, if \(\set{A}, \set{B}\) and \(\set{C}\) are sets, then we have
\(\set{A} \odot (\set{B} \cup \set{C}) = (\set{A} \odot \set{B}) \cup (\set{A} \odot \set{C}).\)
Moreover, \(\set{A} \odot \{1\} = \set{A}\).
\begin{lemma}
    \label{lem:lcm-product}
    Let \(g_1(x), \dots, g_r(x)\) be as in \cref{cor:deco-bij}.
    Then \(\cset(\Gamma_e(f,m_1)) = \bigodot_{i=1}^r \cset(\Gamma_e(f,g_i))\).
\end{lemma}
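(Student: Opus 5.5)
By \cref{cor:deco-bij}, the system \(\Gamma_e(f,m_1)\) is equivalent to the product \(\bigotimes_{i=1}^r \Gamma_e(f,g_i)\). Equivalence of \glspl{fds} is a conjugation by a bijection, so it preserves periods of corresponding points and hence the set of cycle lengths. The statement therefore reduces to a purely combinatorial fact: for \glspl{fds} \((\set{X},\Gamma_1)\) and \((\set{Y},\Gamma_2)\) whose maps are bijective, \(\cset(\Gamma_1\times\Gamma_2) = \cset(\Gamma_1)\odot\cset(\Gamma_2)\). Once this holds for two factors, the general case follows by induction on \(r\), using associativity of \(\otimes\) and of \(\odot\); the latter holds because \(\lcm\) is associative.

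To prove the two-factor claim, take a state \((x,y)\). Since both maps are bijective, every state lies on a cycle. Let \(a=\per(\Gamma_1,x)\) and \(b=\per(\Gamma_2,y)\). Then \((\Gamma_1\times\Gamma_2)^k(x,y)=(x,y)\) holds if and only if \(\Gamma_1^k(x)=x\) and \(\Gamma_2^k(y)=y\), that is, if and only if \(a\mid k\) and \(b\mid k\). Hence \(\per(\Gamma_1\times\Gamma_2,(x,y))=\lcm(a,b)\), which gives \(\cset(\Gamma_1\times\Gamma_2)\subseteq\cset(\Gamma_1)\odot\cset(\Gamma_2)\).

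For the reverse inclusion, take \(a\in\cset(\Gamma_1)\) and \(b\in\cset(\Gamma_2)\) and choose witnesses \(x\) and \(y\) with these periods. The computation above shows that \((x,y)\) has period \(\lcm(a,b)\). Alternatively, both inclusions follow at once from \cref{pro:prod-cycles}: each factor graph is a disjoint union of cycles, and the product distributes over \(\oplus\), so the product graph is a disjoint union of cycles \(\mathcal{C}_{\lcm(i,j)}\), one family for each pair \((i,j)\).

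There is no substantial obstacle. The only point needing care is to confirm that the isomorphism of \cref{cor:deco-bij}, which comes from the Chinese Remainder Theorem, is a genuine equivalence of \glspl{fds}, that is, that it intertwines multiplication by \(f\) on both sides. It does, because the CRT map \(\qring{\pring{R_e}}{\ideal{m_1}}\to\prod_i \qring{\pring{R_e}}{\ideal{g_i}}\) is a ring isomorphism sending \(f\) to \((f,\dots,f)\). A second point is that the argument uses bijectivity of each \(\Gamma_e(f,g_i)\), so that every state is periodic; even without bijectivity the conclusion would still hold, because periods of eventually periodic points combine by \(\lcm\) in the same way.
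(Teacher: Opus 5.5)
Your proposal is correct and follows the same route as the paper. Both decompose \(\Gamma_e(f,m_1)\) via \cref{cor:deco-bij} into a product of bijective systems and use that cycles in a product have lengths equal to the \(\lcm\) of the factor cycle lengths (\cref{pro:prod-cycles}); your direct period computation with induction on \(r\) spells out what the paper's short proof leaves implicit, and it avoids the paper's appeal to \cref{eq:closed-form}, which is stated only over fields and is not needed.
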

\begin{proof}
    From \cref{cor:deco-bij}, we obtain a decomposition
    \[\Gamma_e(f,m_1) \isomorphic \bigotimes_{i=1}^r \Gamma_e(f,g_i).\]
    Since the maps \(\Gamma_e(f,g_i)\) are bijective, their functional graphs consist of sums of cycles.
    Hence, we can apply \cref{eq:closed-form} to compute \(\fgraph(\Gamma_e(f,m_1))\), such that \(\cset(\Gamma_e(f,m_1))\) consists of the least common multiples
    of all combinations of cycle lengths in \(\cset(\Gamma_e(f,g_i))\).
\end{proof}
\begin{lemma}
    \label{lem:irred-union}
    Let \(g\) be a divisor of \(m_1\) such that \(\bar{g} = G(x)^k\) and \(G \in \pring{R_1}\) is irreducible.
    Then for any \(e > 1\), it holds that \(\cset(\Gamma_e(f,g)) = \cset(\Gamma_{e-1}(f,g)) \cup \{\order_e(f,g)\}\).
\end{lemma}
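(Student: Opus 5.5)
We prove the two inclusions separately. The structural fact we use is that \(\qring{\pring{R_e}}{\ideal{g}}\) is filtered by the submodules \(p^k\cdot\qring{\pring{R_e}}{\ideal{g}}\), \(0\le k\le e\). Each nonzero element \(z\) can be written as \(z = p^k w\), where \(\bar{w}\) is nonzero modulo \(\bar{g}=G^k\). We then sort elements according to \(k\) and according to whether \(\bar w\) is coprime to \(G\).

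\textbf{Inclusion \(\supseteq\).} By \cref{lem:z-lift} with \(\varepsilon = e-1\), the map \(z\mapsto pz\) embeds \(\fgraph(\Gamma_{e-1}(f,g))\) into \(\fgraph(\Gamma_e(f,g))\) and preserves periods. Hence \(\cset(\Gamma_{e-1}(f,g))\subseteq\cset(\Gamma_e(f,g))\). Here we identify \(g\) with its reduction, which is again a divisor of the lifted \(m_1\) modulo \(p^{e-1}\). Moreover, \(\order_e(f,g)=\per(\Gamma_e(f,g),1)\in\cset(\Gamma_e(f,g))\) by \cref{lem:order-period-1}.

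\textbf{Inclusion \(\subseteq\).} Let \(z\in\qring{\pring{R_e}}{\ideal{g}}\). There are three cases.
\begin{enumerate}
\item If \(z\in p\cdot\qring{\pring{R_e}}{\ideal{g}}\), write \(z=pw\). Injectivity in \cref{lem:z-lift} shows that \(z\) is the image of the class of \(w\) modulo \(p^{e-1}\). So \(\per(\Gamma_e(f,g),z)\in\cset(\Gamma_{e-1}(f,g))\).
\item If \(\bar z\) is coprime to \(G\), then \(z\) is coprime to \(g\) in \(\pring{R_e}\) by \cref{lem:lifting-Bezout}. \Cref{lem:order-period-1} then gives \(\per(\Gamma_e(f,g),z)=\order_e(f,g)\).
\item Otherwise \(\bar z\neq 0\) but \(G\mid\bar z\). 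This is the main obstacle.
\end{enumerate}

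For the third case I would first try to show that the period of \(z\) still lies in \(\{1,\ell,p\ell,\dots\}\), and that it is bounded by the period of some element of \(p\cdot\qring{\pring{R_e}}{\ideal{g}}\) or equals \(\order_e(f,g)\). Concretely, write \(\bar z = G^{j}U\) with \(0<j<k\) and \(\gcd(U,G)=1\). Lift \(G^j\) to a divisor-like element \(c\) and compare the annihilator conditions \(f^n z\equiv z\) modulo \((g,p^e)\). The period of \(z\) is the least \(n\) with \(g\mid (f^n-1)z\). The cycle lengths of \(\Gamma_e(f,g)\) are totally ordered by divisibility, being of the form \(p^i\ell\) by \cref{cor:deco-bij,the:omega,lem:cset-base}. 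By \cref{the:orbit-1}, every period divides \(\order_e(f,g)\). So it suffices to show that every \(p^i\ell\) strictly below \(\order_e(f,g)\) which occurs as a period already occurs in \(\cset(\Gamma_{e-1}(f,g))\).

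For this I would use \cref{the:omega}. It says that the exponent \(\omega\) grows by at most one from level \(e-1\) to level \(e\), and the full chain \(\{1,\ell,\dots,p^{\omega(\Gamma_{e-1})}\ell\}\) is present at level \(e-1\). So the only possibly new length at level \(e\) is \(p^{\omega(\Gamma_{e-1})+1}\ell\). Since \(\order_e(f,g)\) is the maximum, any new length must equal \(\order_e(f,g)\).

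Thus the third case reduces to the contiguity of the chain \(\cset(\Gamma_e(f,g))=\{1,\ell,\dots,p^{\tau'}\ell\}\) (stated before \cref{lem:z-lift}) together with \cref{the:omega}. These combine to give \(\cset(\Gamma_e(f,g))\subseteq\cset(\Gamma_{e-1}(f,g))\cup\{\order_e(f,g)\}\). I would double-check that this chain description is justified for primary \(g\) and not only assumed; this is where I expect the real work to lie.
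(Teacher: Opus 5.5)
Your argument ends up being the paper's. The paper's proof also reads off the chain forms $\cset(\Gamma_{e-1}(f,g))=\{1,\ell,\dots,p^{\tau'}\ell\}$ and $\cset(\Gamma_e(f,g))=\{1,\ell,\dots,p^{\tau}\ell\}$, with $\tau\in\{\tau',\tau'+1\}$, from \cref{the:omega}. It then identifies the only possible new element $p^{\tau}\ell$ with $\order_e(f,g)$ via \cref{the:orbit-1,lem:order-period-1}, which is exactly your treatment of case 3. Your $\supseteq$ direction via \cref{lem:z-lift} and your cases 1 and 2 are correct. Once the chain description is granted, though, they are redundant, because the case 3 reasoning already covers every $z$.

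The point you flag is the real content, and the paper leaves it in the same place. \Cref{the:omega} as stated only controls the largest exponent $\omega$. So it neither forces every intermediate $p^j\ell$ to occur nor excludes lengths such as $p^j$ with $\ell\nmid p^j$, and \cref{lem:cset-base} only speaks about level $1$.

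You can close the gap by induction on $e$, with no case analysis on $z$. Use the stronger hypothesis $\cset(\Gamma_{e-1}(f,g))=\{1\}\cup\{p^j\ell : p^j\ell\mid\order_{e-1}(f,g)\}$, where $\ell=\order(\bar f,G)$.

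\textbf{Upper bound.} Apply your case 2 idea to $f^n-1$ rather than to $z$. Suppose $z\neq 0$ has period $n$ and $\ell\nmid n$. Then $G\nmid \bar f^{\,n}-1$, so by \cref{lem:lifting-Bezout} $f^n-1$ is invertible modulo $(g,p^e)$, and $(f^n-1)z\equiv 0$ forces $z=0$. Hence $\ell\mid n$. Also $n\mid\order_e(f,g)$ by \cref{the:orbit-1,lem:order-period-1}. Finally, \cref{lem:power-p} applied in $\qring{\pring{R}}{\ideal{g}}$ gives $\order_e(f,g)\in\{\order_{e-1}(f,g),\,p\cdot\order_{e-1}(f,g)\}$.

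\textbf{Lower bound.} Use exactly your $\supseteq$ argument. The two bounds coincide, which yields both the chain at level $e$ and the lemma.

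\textbf{Base case $e=1$.} The same upper-bound argument applies. For the lower bound, use \cref{the:order-pow}. The elements $G^{k-i}$, $1\le i\le k$, have periods $\order(\bar f,G^i)$. The exponent of $p$ in these orders increases by at most one when $i$ increases by one, so every $p^j\ell$ dividing $\order_1(f,g)$ occurs.
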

\begin{proof}
    Let \(\ell = \order(\bar{f},G)\).
    By \cref{the:qureshi,the:order-pow}, we have that
    \[\cset(\Gamma_1(f,g)) = \{1, \ell, p \ell, \dots, p^{\tau_1} \ell\}\]
    for some \(\tau_1 \in \natnum\).
    Moreover, by \cref{the:omega}, we have that
    \[\cset(\Gamma_{e-1}(f,g)) = \{1, \ell, p \ell, \dots, p^{\tau'} \ell\},\]
    where \(\tau' = \tau + t\) for some \(1 \le t < e-1\).
    Applying \cref{the:omega} again, we obtain
    \begin{equation}
        \label{eq:irred-union-1}
        \cset(\Gamma_e(f,g)) = \{1, \ell, p\ell,\dots,p^\tau\ell\},
    \end{equation}
    where \(\tau = \tau'\) or \(\tau = \tau'+ 1\).
    Hence, \(\cset(\Gamma_e(f,g)) = \cset(\Gamma_{e-1}(f,g)) \cup \{p^\tau \ell\}\).
    Moreover, by \cref{the:orbit-1}, we can conclude that \(p^\tau \ell = \per(\Gamma_e(f,g), 1)\).
    Finally, by \cref{lem:order-period-1}, we obtain \(\order_e(f,g) = p^\tau \ell\).
\end{proof}
We are now prepared to iteratively construct \(\cset(\Gamma_e(f,m))\).

\begin{theorem}
    \label{the:cset-lcm-construction}
    Let \(g_1, \dots, g_r \in \pring{R_e}\) as in \cref{cor:deco-bij} and \(\bar{g}_i(x) = G_i(x)^{k_i}\), \(G_i\) irreducible.
    Define \(\set{H} := \bigodot_{i=1}^r \{1, \order_e(f,g_i)\}\).
    Then for any \(e > 1\), it holds that \(\cset(\Gamma_e(f,m)) = \cset(\Gamma_{e-1}(f,m)) \cup \set{H}\).
\end{theorem}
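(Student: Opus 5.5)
Our plan is to reduce everything to the bijective part and then use the distributivity of the \(\lcm\)-product over unions. Since \(\cset(\Gamma_e(f,m)) = \cset(\Gamma_e(f,m_1))\), \cref{lem:lcm-product} gives
\[\cset(\Gamma_e(f,m)) = \bigodot_{i=1}^r \cset(\Gamma_e(f,g_i)).\]
The same holds at level \(e-1\), using the reductions of the \(g_i\) modulo \(p^{e-1}\). These reductions are again pairwise coprime lifts of the \(G_i^{k_i}\), so \cref{cor:deco-bij} and \cref{lem:lcm-product} apply. By \cref{lem:irred-union}, for each \(i\) we have \(\cset(\Gamma_e(f,g_i)) = \set{A}_i \cup \{o_i\}\), where \(\set{A}_i := \cset(\Gamma_{e-1}(f,g_i))\) and \(o_i := \order_e(f,g_i)\).

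Expanding the \(\lcm\)-product by distributivity over unions, \(\bigodot_i(\set{A}_i\cup\{o_i\})\) is the union, over all subsets \(J\subseteq\{1,\dots,r\}\), of the sets
\[\set{P}_J := \Bigl(\bigodot_{i\notin J}\set{A}_i\Bigr)\odot\Bigl(\bigodot_{i\in J}\{o_i\}\Bigr).\]
The term \(J=\emptyset\) is exactly \(\cset(\Gamma_{e-1}(f,m))\). Since \(1\in\set{A}_i\) for every \(i\) (the zero state is a fixed point), each \(\set{P}_J\) contains \(\lcm\{o_i : i\in J\}\). Therefore \(\set{H}=\bigodot_i\{1,o_i\}\) is contained in the union, which gives ``\(\supseteq\)''.

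For ``\(\subseteq\)'' we must show that every element \(\lcm(\lcm_{i\notin J} a_i, \lcm_{i\in J} o_i)\) with \(a_i\in\set{A}_i\) already lies in \(\cset(\Gamma_{e-1}(f,m))\cup\set{H}\). The key observation is the chain structure. By \cref{the:omega} and \cref{eq:irred-union-1}, \(\set{A}_i=\{1,\ell_i,p\ell_i,\dots,p^{\tau_i}\ell_i\}\), and \(o_i=p^{\tau_i}\ell_i\) or \(o_i = p^{\tau_i+1}\ell_i\). In either case every \(a_i\in\set{A}_i\) divides \(o_i\), and the \(p\)-adic valuation of each \(a_i\) is at most \(\tau_i\le v_p(o_i)\).

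The main obstacle is the ``mixed'' terms, where \(J\) is a proper nonempty subset and the \(a_i\) are arbitrary. We plan a pointwise replacement argument. Write \(L=\lcm(\lcm_{i\notin J}a_i,\lcm_{i\in J}o_i)\). Its \(p'\)-part is \(\lcm\) of \(\ell_i\) over the relevant indices, since \(\ell_i\) is coprime to \(p\) by \cref{cor:cset-structure}. Its \(p\)-part is \(p^{\max(\cdot)}\), the maximum of the valuations. Two cases arise.
\begin{enumerate}
\item If the maximum valuation is attained by some \(o_j\) with \(v_p(o_j)=\tau_j+1\) (a genuine new length at level \(e\)), we replace each \(a_i\neq 1\) for \(i\notin J\) by \(o_i\). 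This keeps the \(p'\)-part and does not raise the maximum beyond \(v_p(o_j)\), because \(v_p(o_i)\le\tau_i+1\). We must also check that it does not exceed \(\tau_j+1\). Where it might, we add \(i\) to \(J\) instead and argue that the maximum then equals \(\max_i v_p(o_i)\), so \(L\in\set{H}\).
\item Otherwise every \(o_i\) with \(i\in J\) satisfies \(v_p(o_i)=\tau_i\). Then \(o_i\in\set{A}_i\), so \(L\in\set{P}_\emptyset=\cset(\Gamma_{e-1}(f,m))\).
\end{enumerate}
We expect the careful valuation bookkeeping in case 1 to be the delicate part. If pointwise replacement fails, we will fall back on \cref{lem:z-lift} together with \cref{the:orbit-1}: every cycle of \(\Gamma_e\) is either the image of one at level \(e-1\) or passes through a state whose components are units modulo the relevant \(g_i\), so by \cref{lem:order-period-1} its length is an \(\lcm\) of the \(o_i\).
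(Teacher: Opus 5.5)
\textbf{There is a gap in Case~1 of the reverse inclusion.} Your reduction to \(m_1\), the distributive expansion into the sets \(\set{P}_J\), and the inclusion \(\set{H}\subseteq\cset(\Gamma_e(f,m))\) via \(1\in\set{A}_i\) all coincide with the paper's proof. The problem is Case~1, the only nontrivial step. Put \(\beta=v_p(L)\).

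Replacing \(a_i\) by \(o_i\) for \(i\notin J\) raises the \(p\)-part whenever \(v_p(o_i)>\beta\), and nothing excludes this. For example, take \(a_i=\ell_i\), \(v_p(o_i)=3\), \(v_p(o_j)=\tau_j+1=1\). The bound \(v_p(o_i)\le\tau_i+1\) has nothing to do with \(\tau_j+1\). ``Adding \(i\) to \(J\)'' is the same replacement, so it changes \(L\) instead of placing it in \(\set{H}\).

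The missing idea is that in this situation \(L\) is already a level-\((e-1)\) length. Indeed, \(v_p(o_i)>\beta\) forces \(\tau_i\ge\beta\), hence \(p^\beta\ell_i\in\set{A}_i\). Choose \(c_i=p^\beta\ell_i\), \(c_k=\ell_k\in\set{A}_k\) for every other index in the support of \(L\) (including all \(k\in J\)), and \(c_k=1\) otherwise. Then \(\lcm_k c_k=L\in\cset(\Gamma_{e-1}(f,m))\). This is precisely the paper's middle subcase.

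So the correct split is not whether the maximizing \(o_j\) is new. It is whether some \(i\notin J\) with \(a_i\ne1\) has \(\tau_i\ge\beta\).
\begin{itemize}
\item If yes, \(L\) is old by the construction above. This also covers \(\beta\) being attained by some \(a_i\).
\item If no, then \(v_p(o_i)\le\tau_i+1\le\beta\) for all such \(i\), so \(L=\lcm\{o_k : k\in J\cup\{i\notin J : a_i\ne1\}\}\in\set{H}\).
\end{itemize}
Your Case~2 is also misstated: the negation of Case~1 still allows a new \(o_j\) strictly below the maximum. That is easily repaired by replacing such \(o_j\) with \(\ell_j\).

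\textbf{The fallback does not close the gap}, because the dichotomy it relies on is false. Take a state whose CRT components are a unit modulo \(g_1\) and \(p\) times a unit modulo \(g_2\). It is not of the form \(pz\), and its \(g_2\)-component is not a unit. Its period is the \(\lcm\) of \(o_1\) and a level-\((e-1)\) length for \(g_2\). That is exactly one of the mixed terms you were trying to control.

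A minor point: \(p\nmid\ell_i\) follows from \(\ell_i\mid q^{\deg G_i}-1\), not from \cref{cor:cset-structure}.
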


\begin{proof} 
    Since \(\cset(\Gamma_e(f,m)) = \cset(\Gamma_e(f,m_1))\), we only have to consider the bijective subsystem defined by \(m_1\).
    
    Let \(\cset:= \cset(\Gamma_e(f, m)) \) and \(\cset':= \cset(\Gamma_{e-1}(f,m)) \cup \set{H}\). We prove that \(\cset = \cset'\) by showing both inclusions. From \cref{lem:lcm-product}, we have that
    \(\cset(\Gamma_e(f, m_1)) = \bigodot_{i=1}^r \cset(\Gamma_e(f,g_i))\).
    Moreover, by \cref{lem:irred-union}, we obtain
    \begin{equation*}
        \cset = \bigodot_{i=1}^r \cset(\Gamma_{e-1}(f, g_i)) \cup \{\order_e(f,g_i)\}.
    \end{equation*}
    Let \(\set{I} = \{1, \dots, r\}\) and \(\set{S} \subseteq \set{I}\).
    For each \(i \in \set{I}\), define
    \[
        \set{D}_i(\set{S}) =
            \begin{cases}
                \{\order_e(f,g_i)\} \text{ if } i \in \set{S},\\
                \cset(\Gamma_{e-1}(f,g_i)) \text{ else.}
            \end{cases}
    \]
    Using the distributivity law, it follows that
    \begin{equation}
    	\label{eq:odot-dist-2}
        \cset = \bigcup_{\set{S}\subseteq \set{I}} \bigodot_{i=1}^r \set{D}_i(\set{S}) = \cset(\Gamma_{e-1}(f,m_1))   \cup \left( \bigcup_{\substack{\set{S}\subseteq\set{I} \\ \set{S}\neq \emptyset}} \bigodot_{i=1}^r \set{D}_i(\set{S}) \right).
    \end{equation}
    
On the other hand, since \(1\in \cset(\Gamma_{e-1}(f, m_i))\) we have

	\begin{equation}
		\label{eq:cset_prime}
 		\cset' =  \cset(\Gamma_{e-1}(f, m_i)) \cup \bigodot_{i=1}^{r}\{1, \order_{e}(f,g_i)\} =  \cset(\Gamma_{e-1}(f, m_i)) \cup \left\{ \mathop{\mathrm{lcm}}_{k\in K} \order_{e}(f,g_k) : K \subseteq I, K\neq \emptyset  \right\} 
	\end{equation}

To prove the inclusion \(\cset' \subseteq \cset\), consider \(K\subseteq I\) with \(K\neq \emptyset\) and \(h=\mathop{\mathrm{lcm}}_{k\in K} \order_e(f,g_k)\).
Since \(1\in  \cset(\Gamma_{e-1}(f,g_i))\) for every \(i\in \set{I} \setminus \set{K}\), we have that \(h \in  \bigodot_{i=1}^r \set{D}_i(\set{K})\subseteq \cset \).
Thus, \(\cset' \subseteq \cset\).

In order to prove the inclusion \(\cset \subseteq \cset'\), it suffices to prove that for each \(S\subseteq I, S\neq \emptyset\) the following inclusion holds 
	\begin{equation}
		\label{eq:cset_inclusion}
		\bigodot_{i=1}^r \set{D}_i(\set{S}) \subseteq  \cset(\Gamma_{e-1}(f, m_1)) \cup \left\{ \mathop{\mathrm{lcm}}_{k\in K} \order_{e}(f,g_k) : K \subseteq I, K\neq \emptyset  \right\}.
	\end{equation}

By Lemma \ref{lem:irred-union} and Equation \eqref{eq:irred-union-1}, \( \cset(\Gamma_{e-1}(f,g_i))=\{1,\ell_i,p\ell_i, \cdots, p^{\tau_i'}\ell_i\} \)
and \( \order_e(f,g_i)=p^{\tau_i}\ell_i \) where \(\tau_i \in \{ \tau_i', \tau_i'+1 \}  \).
Let \( \set{Y}:=\set{I}\setminus\set{S}\).
By definition of the lcm-product, for each \(h \in  \bigodot_{i=1}^r \set{D}_i(\set{S})\) there is a subset \(\set{S}_0 \subseteq \set{S}\) such that
\(h= \lcm\{ p^{\alpha_i}\ell_i, p^{\tau_j}\ell_j : i \in \set{S}_0, j\in \set{Y}\}\) with \(0\leq \alpha_i \leq \tau_i'\).
Thus, \[ h = p^{\beta} \lcm\{ \ell_k : k \in \set{S}_0 \cup \set{Y} \}, \] where \(\beta = \max\{\alpha_i, \tau_j : i \in \set{S}_0, j\in \set{Y} \}  \). We consider two cases.
\begin{itemize}
    \item Case \( \beta= \alpha_{i_0}\) for some \(i_0 \in \set{S}_0\).
    Define \( (c_i)_{1\leq i \leq r}  \) by
    \[ c_i= \left\{
    \begin{array}{ll}
        p^{\alpha_i}\ell_i & \textrm{if } i \in \set{S}_0;\\
        \ell_i & \textrm{if } i \in \set{Y};\\
        1 & \textrm{otherwise.}
    \end{array}
    \right. \]
    Since \(\alpha_{i_0} = \max\{\alpha_i: i \in \set{S_0}\} \) we have that \(h = p^{\alpha_{i_0}} \lcm\{ \ell_k : k \in \set{S}_0 \cup \set{Y} \} = \lcm\{c_i: 1\leq i \leq r\} \in \bigodot_{i=1}^{r} \cset(\Gamma_{e-1}(f,g_i)) =  \cset(\Gamma_{e-1}(f,m_1)) \).
    
    \item Case \( \beta= \tau_{j_0}\) for some \(j_0 \in \set{Y}\) and \( \tau_{j_0} > \max\{\alpha_i: i \in \set{S}_0\}\).
    We distinguish two subcases.
    \begin{itemize}
        \item If \(\tau_{j_0}< \tau_{i_0}\) for some \(i_0\in \set{S}_0\), then \(\tau_{j_0}\leq \tau_{i_0}'\) and we consider \( (c_i)_{1\leq i \leq r}  \) defined by
        \[ c_i= \left\{
        \begin{array}{ll}
            p^{\tau_{j_0}}\ell_{i_0} & \textrm{if } i=i_0;\\
            \ell_i & \textrm{if } i \in \set{S}_0\cup \set{Y}, i\neq i_0;\\
            1 & \textrm{otherwise.}
        \end{array}
        \right. \]
        We have that \(h = p^{\tau_{j_0}} \lcm\{ \ell_k : k \in \set{S}_0 \cup \set{Y} \} = \lcm\{c_i: 1\leq i \leq r\} \in \bigodot_{i=1}^{r} \cset(\Gamma_{e-1}(f,g_i)) =  \cset(\Gamma_{e-1}(f,m_1)) \).
        \item If \(\tau_{j_0}\geq \tau_{i}\) for all \(i\in \set{S}_0\), we have that \( h= p^{\tau_{j_0}}\lcm\{ \ell_k : k \in \set{S}_0 \cup \set{Y} \} = \lcm\{\order_e(f,g_k):k \in \set{S}_0 \cup \set{Y}   \} \in \left\{ \mathop{\mathrm{lcm}}_{k\in K} \order_{e}(f,g_k) : K \subseteq I, K\neq \emptyset  \right\}  \)
    \end{itemize}
\end{itemize}
Thus, in all cases, \(h\) belongs to the right-hand side of \eqref{eq:cset_inclusion}, and the inclusion follows.
\end{proof}

To tackle the second problem mentioned above, we will show that \(\order_e(f,g_i)\) can be determined based on the factorization of \(\bar{m}\) into irreducible polynomials
in \(\pring{R_1}\) such that we do not need to explicitly lift the factorization to larger rings.
\begin{lemma}
    \label{lem:g-hat-field}
    Let \(G \in \pring{R_1}\) be a divisor of \(\bar{m}_1\) and set \(\hat{G}=\bar{m}/G\).
    Then \(\per(\Gamma_1(f,m), \hat{G}) = \order(\bar{f},G)\) and \(\pper(\Gamma_1(f,m), \hat{G}) = 0\).
\end{lemma}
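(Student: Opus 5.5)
The plan is to work entirely over \(\field_q\). Write \(F=\bar f\) and \(M=\bar m = M_1 M_2\), where this is the \(F\)-decomposition: \(\gcd(F,M_1)=1\) and \(F\in\rad(M_2)\). Since \(G\mid M_1\), we have \(\hat G = \bar m/G = (M_1/G)\cdot M_2\). The key observation is that multiplication by \(\hat G\) identifies \(\qring{\pring{\field_q}}{\ideal{G}}\) with the submodule \(\hat G\,\pring{\field_q}/\ideal{M}\) of \(\qring{\pring{\field_q}}{\ideal{M}}\). Concretely, for any \(z\) and \(k\ge 0\),
\[F^k\hat G\equiv \hat G \pmod{M} \iff M \mid \hat G\,(F^k-1) \iff G\mid F^k-1.\]
The last equivalence holds because \(M=G\hat G\) and we may cancel \(\hat G\) in the domain \(\pring{\field_q}\).

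The proof then proceeds in three steps. First, I show that \(\hat G\) lies on a cycle, i.e.\ its preperiod is \(0\). Since \(F\) is coprime to \(G\) (because \(G\mid M_1\)), we have \(F^\ell\equiv 1\pmod G\) for \(\ell=\order(F,G)\). By the displayed equivalence with \(k=\ell\), this gives \(F^{\ell}\hat G\equiv \hat G \pmod M\), that is, \(\Gamma_1(f,m)^{\ell}(\hat G)=\hat G\). A point that returns to itself is periodic, so \(\pper(\Gamma_1(f,m),\hat G)=0\).

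Second, I identify the period. Because the preperiod is \(0\), the period is the least \(k\ge 1\) with \(F^k\hat G\equiv\hat G\pmod M\). By the equivalence, this is the least \(k\ge1\) with \(F^k\equiv1\pmod G\), which is exactly \(\order(F,G)\) by \cref{def:order}. The coprimality hypothesis required in \cref{def:order} holds because \(G\mid M_1\) and \(\gcd(F,M_1)=1\).

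The only delicate point is the cancellation step \(M\mid \hat G(F^k-1)\Rightarrow G\mid F^k-1\), and it is immediate since \(\pring{\field_q}\) is an integral domain and \(\hat G\neq 0\). I expect no real obstacle beyond being careful that \(\hat G\) is understood as an element of \(\qring{\pring{R}}{(m,p)}\cong\qring{\pring{\field_q}}{\ideal{\bar m}}\), so that reduction modulo \(p\) is harmless. Alternatively, one could argue through the CRT decomposition of \cref{cor:deco-nil-bij} and \cref{lem:order-period-1}, but the direct divisibility argument is shorter.
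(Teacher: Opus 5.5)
Your proposal is correct and follows essentially the same route as the paper: both arguments multiply \(\bar f^{k}\equiv 1 \pmod G\) by \(\hat G\), and conversely cancel the nonzero \(\hat G\) from \(\bar f^{k}\hat G\equiv \hat G \pmod{\bar m}\) in the integral domain \(\pring{\field_q}\). The only difference is cosmetic: you state this as a single equivalence and read off the least \(k\) directly, whereas the paper derives \(\ell\mid k\) and \(k\mid \ell\) separately.
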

\begin{proof}
    Let \(k = \order(\bar{f},G)\).
    By \cref{def:order}, we have \(\bar{f}^k \equiv 1 \pmod{G}\).
    Multiplying both sides by \(\hat{G}\), we obtain \(\bar{f}^k \cdot \hat{G} \equiv \hat{G} \pmod{m}\).
    Therefore, if \(\ell=\per(\Gamma_1(f,m), \hat{G})\), then by the principle of minimality of \(\ell\), it follows that \(\ell \mid k\).
    Moreover, it is clear that \(\pper(\Gamma_1(f,m), \hat{G}) = 0\).
    
    On the other hand, we then obtain \(\bar{f}^\ell \cdot \hat{G} \equiv \hat{G} \pmod{m}\).
    The polynomial \(\hat{G}\) is nonzero, implying that \(\bar{f}^\ell \equiv 1 \pmod{G}\).
    By the principle of the minimality of \(k\), we have \(k \mid \ell\).
    From \(\ell \mid k\) and \(k \mid \ell\), it follows that \(\ell = k\).
\end{proof}
As we see in the following proposition, a similar property holds when switching from \(R_1\) to \(R_e\). 


\begin{proposition}
    \label{pro:g-hat}
Let \(\bar{m}=M_1 M_2\) be the \(\bar{f}\)-decomposition of \(\bar{m}\) with \(\gcd(\bar{f},M_1)=1\). Let \(G \in \pring{R_1}\) be a divisor of \(M_1\), and set \(\hat{G} = \bar{m}/G\). Suppose that \(\gcd(G,\hat{G}) = 1\). Let \(g, \hat{g} \in \pring{R_e}\) be monic coprime polynomials such that \[ \pi_e(m) = g \hat{g}, \quad G \equiv g \pmod{p}, \quad \text{and} \quad \hat{G} \equiv \hat{g} \pmod{p}. \]
Let \( \widetilde{g} \in \pring{R}\) be any polynomial such that \( \widetilde{g} \equiv \hat{G} \pmod{p} \).  Then \(\per\bigl(\Gamma_e(f,m), \widetilde{g}^{p^{e-1}}\bigr) = \order_e(f,g)\) and \(\pper\bigl(\Gamma_e(f,m), \widetilde{g}^{p^{e-1}}\bigr) = 0\).
\end{proposition}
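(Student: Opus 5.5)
Set \(w \coloneqq \widetilde{g}^{p^{e-1}}\), reduced modulo \((m,p^e)\). The first step is to show that \(w \equiv 0 \pmod{\hat{g}}\) in \(\pring{R_e}\). Since \(\widetilde{g} \equiv \hat{G} \equiv \hat{g} \pmod{p}\), we may write \(\widetilde{g} = \hat{g} + p r\) for some \(r \in \pring{R}\). Iterating \cref{lem:power-p} \(e-1\) times then gives
\[
\widetilde{g}^{p^{e-1}} \equiv \hat{g}^{p^{e-1}} \pmod{p^{e}},
\]
and the right-hand side is a multiple of \(\hat{g}\).

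The second step is to work in the Chinese Remainder decomposition. Because \(g\) and \(\hat{g}\) are coprime and \(\pi_e(m) = g\hat{g}\), \cref{the:crt} gives
\[
\qring{\pring{R_e}}{\ideal{m}} \isomorphic \qring{\pring{R_e}}{\ideal{g}} \times \qring{\pring{R_e}}{\ideal{\hat{g}}},
\]
and \(\Gamma_e(f,m)\) splits as \(\Gamma_e(f,g)\times\Gamma_e(f,\hat{g})\). Under this isomorphism \(w\) corresponds to the pair \((w \bmod g,\, 0)\), since its \(\hat{g}\)-component vanishes by the first step. The zero component is a fixed point with period \(1\) and preperiod \(0\). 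Hence the period of \(w\) equals \(\per(\Gamma_e(f,g), w)\), and the preperiod of \(w\) equals \(\pper(\Gamma_e(f,g), w)\).

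The third step concerns the \(g\)-component. Since \(G \mid M_1\) and \(\gcd(\bar f, M_1)=1\), the element \(f\) is invertible modulo \(g\) (the Bézout identity lifts by \cref{lem:lifting-Bezout}). Thus \(\Gamma_e(f,g)\) is bijective, and every preperiod in it is \(0\). It remains to see that \(w\) is coprime to \(g\). We have \(\bar w = \hat{G}^{p^{e-1}}\), and \(\gcd(G,\hat{G})=1\) in \(\pring{\field_q}\). So there are \(a,b\) with \(a\bar w + bG = 1\) over \(\field_q\), and \cref{lem:lifting-Bezout} lifts this to a Bézout identity for \(w\) and \(g\) modulo \(p^e\). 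Now \cref{lem:order-period-1}, applied with \(z = w\), gives \(\per(\Gamma_e(f,g), w) = \order_e(f,g)\). One caveat: that lemma is stated for divisors of \(m_1\), but its proof only uses the bijectivity of \(\Gamma_e(f,g)\) together with \cref{the:orbit-1}, so it applies here.

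The step I expect to need the most care is the first one, because \(\widetilde{g}\) is congruent to \(\hat{g}\) only modulo \(p\), not modulo \(p^e\). This is exactly why the exponent \(p^{e-1}\) appears: \cref{lem:power-p} raises a congruence modulo \(p^i\) to one modulo \(p^{i+1}\) at each \(p\)-th power, so \(e-1\) applications bring the congruence modulo \(p\) up to modulo \(p^e\), and this is what forces the \(\hat{g}\)-component to vanish. A smaller point is to keep track of identifications: the computation should be carried out in \(\pring{R}\) modulo \((m,p^e)\), with \(g,\hat{g}\) taken as lifts to \(\pring{R}\).
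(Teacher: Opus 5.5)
Your proof is correct and follows essentially the same route as the paper. Both arguments use \cref{lem:power-p} to make the \(\hat{g}\)-component of \(\widetilde{g}^{p^{e-1}}\) vanish, the CRT splitting along \(\pi_e(m)=g\hat{g}\), and \cref{lem:lifting-Bezout} to make both \(f\) and \(\widetilde{g}^{p^{e-1}}\) invertible modulo \((g,p^e)\); the paper just reduces \(f^k\widetilde{g}^{p^{e-1}}\equiv\widetilde{g}^{p^{e-1}} \pmod{(m,p^e)}\) directly to \(f^k\equiv 1\pmod{(g,p^e)}\) instead of citing \cref{lem:order-period-1}, which also sidesteps your caveat about \(g\) dividing \(m_1\).
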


\begin{proof}
 Consider the congruence
    \begin{equation}
    	\label{eq:cvr-1}
    	f^{k} \cdot \widetilde{g}^{p^{e-1}} \equiv  \widetilde{g}^{p^{e-1}} \pmod{(m,p^e)}.
    \end{equation}
    Since \(m \equiv g\cdot\hat{g} \pmod{p^e}\), where \(g,\hat{g} \in \pring{R_e}\) are coprime polynomials, by the Chinese remainder theorem, congruence \eqref{eq:cvr-1} is equivalent to the system of congruences
    \begin{align}
        \label{eq:cvr-2}
\left\{ \begin{array}{l}
f^{k} \cdot \widetilde{g}^{p^{e-1}} \equiv \widetilde{g}^{p^{e-1}} \pmod{(g, p^e)}; \\
f^{k} \cdot \widetilde{g}^{p^{e-1}} \equiv \widetilde{g}^{p^{e-1}} \pmod{(\hat{g}, p^e)}.
\end{array}  \right.
    \end{align}
	We have that \( \widetilde{g} \equiv \hat{g} \pmod{p} \). Raising both sides to the \(p^k\)-th power and applying \cref{lem:power-p} results in \( \widetilde{g}^{p^{e-1}} \equiv \hat{g}^{p^{e-1}} \pmod{p^e} \). Then,  \( \widetilde{g}^{p^{e-1}} \equiv 0 \pmod{(\hat{g}, p^e)}\) and the last conguence of the system \eqref{eq:cvr-2} holds.
	On the other hand, since \(G \equiv g \pmod{p}\), \(\hat{G} \equiv \widetilde{g} \pmod{p}\) and \(\gcd(G,\hat{G})=1\), we have that \(g\) and \(\pi_e(\widetilde{g})\) are coprime modulo \(p\). By \cref{lem:lifting-Bezout}, they are also coprime in \(\pring{R_e}\). Thus, \(\widetilde{g}\) is invertible in \(\qring{\pring{R_e}}{\ideal{g}}\) and the first congruence in the system \eqref{eq:cvr-2} is equivalent to
	\begin{equation}
		\label{eq:cvr-3}
		f^{k} \equiv 1 \pmod{(g, p^e)}
	\end{equation}
We conclude that a positive integer \(k\) satisfies the congruence \eqref{eq:cvr-1} if and only if it satisfies the congruence \eqref{eq:cvr-3}. Since \(\overline{g}=G\) divides \(M_1\) and \(M_1\) is coprime with \(\bar{f}\), then \(f\) and \(g\) are coprime modulo \(p\). By \cref{lem:lifting-Bezout} they are also coprime modulo \(p^e\) and the congruence \eqref{eq:cvr-3} has solution. In particular, by the equivalence of the congruences \eqref{eq:cvr-1} and \eqref{eq:cvr-3}, their least solutions coincide; that is, \(\per\bigl(\Gamma_e(f,m), \widetilde{g}^{p^{e-1}}\bigr) = \order_e(f,g)\). Note that the existence of solution of \eqref{eq:cvr-1} also implies \(\pper\bigl(\Gamma_e(f,m), \widetilde{g}^{p^{e-1}}\bigr) = 0\).
\end{proof}

\begin{example}
    \label{ex:1}
    In this example, let \(f(x) = x^{13}+x\), \(m(x) = x^{14}-1\), and \(p=2\).
    We have that the \gls{lfds} \(\sys(\qring{\pring{R_e}}{\ideal{m}}, \Gamma_e(f,m))\) is isomorphic to \(\sys(R_e^{n}, \Gamma_e(\mat{A}))\), where \(\pi_e(\mat{A})\) is a circulant matrix
    with first row \((0, 1, 0, \dots, 0, 1)\).
    Furthermore, let \(t = 1\) and \(u=7\) such that \(n=p^tu\) with \(p \nmid u\).
    Setting \(M_2(x) = \gcd(x^u-1, \bar{f}(x))^{p^t} = (x+1)^2\) and \(M_1(x) = \bar{m}(x) / M_2(x) = G_1(x)^2 G_2(x)^2\) with
    \(G_1(x) = x^3+x+1\) and \(G_2(x) = x^3+x^2+1\), it is easy to verify that \(\bar{m} = M_1M_2\) is the \(f\)-decomposition of \(m\) in \(\pring{R_1}\).
    Hence, if \(\bmat{A}=\pi_1(\mat{A})\) and \(\mathcal{M}_e(\mat{A})\) is \(R_e^n\) viewed as a \(\pring{R_e}\)-module where \(f\) acts as multiplication by \(\pi_e(\mat{A})\),
    then we have a decomposition of \(\mathcal{M}_1(\mat{A})\) as follows
    \[\mathcal{M}_1(\mat{A}) \isomorphic \qring{\pring{R_1}}{\ideal{(x+1)^2}} \times \qring{\pring{R_1}}{\ideal{(x^3+x+1)^2(x^3+x^2+1)^2}},\]
    leading to a decomposition of the system as
    \begin{align*}
       \sys(\qring{\pring{R_1}}{\ideal{x^{14}-1}}, \Gamma_1(f,m)) &\isomorphic \underbrace{\sys(\qring{\pring{R_1}}{\ideal{(x+1)^2}}, \Gamma_1(f,M_2))}_{\text{nilpotent}} \\
       &\otimes \underbrace{\sys(\qring{\pring{R_1}}{\ideal{(x^3+x+1)^2(x^3+x^2+1)^2}}, \Gamma_1(f,M_1))}_{\text{bijective}}.
    \end{align*}
    It is immediate that
    \begin{enumerate}
        \item \(\order(\bar{f},G_1) = 7\), \(\order(\bar{f},G_1^2) = 14\);
        \item \(\order(\bar{f},G_2) = 7\), \(\order(\bar{f},G_2^2) = 14\).
    \end{enumerate}
    Consequently, \(\cset(\Gamma_1(f,m)) = \{1, 7, 14\}\).
    Let \(\hat{G}_1(x) = (x^{14}-1)/G_1(x) = (x+1)^2 (x^3 + x + 1) (x^3 + x^2 + 1)^2\).
    \Cref{tab:ex1-iterations} lists the orbit of \(\hat{G}_1(x)\), namely the sequence of iterations \(f^k \hat{G}_1 \pmod{(m,p)}\) for increasing \(k\).
    One can see that \(f \hat{G}_1 \equiv f^8 \hat{G}_1 \pmod{(m,p)}\), which shows that \(\hat{G}_1\) lies on a cycle of length 7, verifying \cref{lem:g-hat-field}.
    Similarly, if we set \(G_i' = G_i^2\), \(i=1,2\), one can easily verify that the cycle in the orbit of \(\hat{G}_1'(x) = (x^{14}-1)/G_1(x)^2 = (x+1)^2(x^3+x^2+1)^2\) has length 14.
    \begin{table}[hbt]
        \centering
        \begin{tabular}{ll}
            \hline
            \(k\) & State \\
            \hline
             &\\[-1em]
             1 &  \(x^{13}+x^{12}+x^9+x^7+x^6+x^5+x^2+1\) \\
             2 & \(x^{12}+x^{11}+x^{10}+x^7+x^5+x^4+x^3+1\) \\
             3 & \(x^{12}+x^{10}+x^9+x^8+x^5+x^3+x^2+x\) \\
             4 & \(x^{13}+x^{10}+x^8+x^7+x^6+x^3+x+1\) \\
             5 & \(x^{13}+x^{12}+x^{11}+x^8+x^6+x^5+x^4+x\) \\
             6 & \(x^{13}+x^{11}+x^{10}+x^9+x^6+x^4+x^3+x^2\) \\
             7 & \(x^{11}+x^9+x^8+x^7+x^4+x^2+x+1\) \\
             8 & \(x^{13}+x^{12}+x^9+x^7+x^6+x^5+x^2+1\) \\
             \hline
        \end{tabular}
        \caption{Iterations of \(\hat{G}_1(x)\).}
        \label{tab:ex1-iterations}
    \end{table}

        Now, let us fix \(e=2\) and consider \(g_1 \in \pring{R_2}\) such that \(\bar{g}_1 = G_1'\).
        To compute \(\order_2(f,g_1)\), we use \cref{pro:g-hat} and choose \(\widetilde{g}_1\in\pring{R}\) with coefficient in \([0,p)\) such that \(\widetilde{g}_1 \equiv \hat{G}_1' \pmod{p}\).
        Then, we analyze the orbit of \(\widetilde{g}_1^p\), i.e. \(f^k\widetilde{g}_1^2\pmod{(m,p^2)}\) for increasing \(k\).
        The orbit is presented in \cref{tab:ex1-iter-1}.
        We observe that \(f\widetilde{g}_1^2\equiv f^{15}\widetilde{g}_1^2 \pmod{(m,p^2)}\).
        Therefore, \(\per(\Gamma_2(f,m), \widetilde{g}_1^2) = 14\), from which we can conclude that \(\order_2(f,g_1) = 14\),
        and further \(\omega(\Gamma_2(f,g_1),7) = \omega(\Gamma_1(f,g_1), 7)\); see \cref{def:omega}.
        A similar behavior can be observed in case of \(\hat{G}_2'\),
        such that \(\cset(\Gamma_2(f,m)) = \cset(\Gamma_1(f,m)) = \{1, 7, 14\}\).

        Next, let us fix \(e=3\), consider \(g_1\in\pring{R_3}\) such that \(\bar{g}_1 = G_1'\), and examine the orbit of \(\widetilde{g}_1^{p^2}\) to obtain \(\order_3(f,g_1)\).
        The orbit is given in \cref{tab:ex1-iter-2}.
        As is shown, we have that
        \[f \cdot \widetilde{g}_1^{4} \equiv f^{29} \cdot \widetilde{g}_1^4 \not\equiv f^{15} \cdot \widetilde{g}_1^{4} \pmod{(m,p^3)}.\]
        Thus, \(\order_3(f,g_1) = 28\), and \(\omega(\Gamma_3(f,g_1), 7) = \omega(\Gamma_2(f,g_1), 7) + 1\).
        Similar behavior can be observed for \(\hat{G}_2'\).
        As a result, \(\cset(\Gamma_3(f,m)) = \cset(\Gamma_2(f,m)) \cup \{28\} = \{1,7,14,28\}\).
    \begin{table}[hbt]
        \centering
        \begin{tabular}{ll}
                \hline 
                 \(k\) & State \\
                 \hline
                 &\\[-1em]
                 1 & \(3 x^{13}+x^9+x^7+x^5+2 x^3\) \\
                 2 & \(3 x^{12}+x^{10}+2 x^8+2 x^6+3 x^4+2 x^2+3\) \\
                 \(\vdots\) & \(\vdots\) \\
                 14 & \(2 x^{12}+2 x^{10}+3 x^8+2 x^6+3 x^4+3 x^2+1\) \\
                 15 & \(3 x^{13}+x^9+x^7+x^5+2 x^3\) \\
                 \hline
            \end{tabular}
            \caption{Orbit of \(\widetilde{g}_1^2\) (\(e=2\)).}
            \label{tab:ex1-iter-1}
    \end{table}
    \begin{table}[hbt]
        \centering
        \begin{tabular}{ll}
                \hline
                 \(k\) & State \\
                 \hline
                 &\\[-1em]
                 1 & \(3 x^{13}+4 x^{11}+x^9+x^7+x^5+6 x^3\) \\
                 2 & \(7 x^{12}+5 x^{10}+2 x^8+2 x^6+7 x^4+6 x^2+3\) \\
                 \(\vdots\) & \(\vdots\) \\
                 15 & \(7 x^{13}+4 x^{11}+x^9+5 x^7+x^5+2 x^3+4 x\) \\
                 \(\vdots\) & \(\vdots\) \\
                 28 & \(6 x^{12}+6 x^{10}+3 x^8+6 x^6+3 x^4+3 x^2+5\) \\
                 29 & \(3 x^{13}+4 x^{11}+x^9+x^7+x^5+6 x^3\) \\
                 \hline
            \end{tabular}
            \caption{Orbit of \(\widetilde{g}_1^{4}\) (\(e=3\)).}
            \label{tab:ex1-iter-2}
    \end{table}
\end{example}

\subsection{Algorithm to Compute \texorpdfstring{\(\cset(\Gamma_e(f,m))\)}{the Set of Cycle Lengths}}
\label{ssec:alg-cycles}

\input{algorithms/cycle_lengths}
\input{algorithms/order}

We now have all the ingredients to present an algorithm for computing \(\cset(\Gamma_e(f,m))\).
The procedure is given in \cref{alg:cycle-lengths}.
In what follows, \(O^{\sim}\) denotes soft-\(O\) complexity, suppressing polylogarithmic factors.
Furthermore, let \(\mathcal{M}_e(n)\) be the algorithmic complexity of multiplying two polynomials of degree less than \(n\) over \(R_e\).
Note that \(\lvert R_e \rvert = q^e\), and hence we need \(O(e\cdot\log(q))\) bits to represent an element in \(R_e\).
Recently, an efficient algorithm for polynomial multiplication has been proposed, which has a bit complexity of \(\mathcal{M}_e(n) = O(ne\cdot \log(q)\cdot\log(ne\log(q)))\),
or, in other words, \(O(b\cdot\log(b))\) bit operations in terms of the bit size \(b = O(ne \cdot \log(q))\) of the polynomial inputs \cite{Harvey2022}.
\begin{theorem}
    \label{the:alg-1}
    \Cref{alg:cycle-lengths} correctly computes the set of cycle lengths \(\cset(\Gamma_e(f,m))\) in expected time
    \(O^{\sim}\bigl(n^{1+\log(\log(n))} + n^2e^2 + (n^2+e^2 + ne)\cdot\mathcal{M}_e(n)\bigr)\).
\end{theorem}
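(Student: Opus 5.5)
The proof splits into correctness and running time. We have not seen \cref{alg:cycle-lengths}. We assume it proceeds as follows. It computes the \(\bar f\)-decomposition \(\bar m = M_1M_2\) over \(\field_q\), factors \(M_1=\prod_i G_i^{k_i}\) into irreducibles, and forms \(\hat G_i = \bar m/G_i^{k_i}\). For each level \(\varepsilon = 1,\dots,e\) it computes \(\order_\varepsilon(f,g_i)\) as the period of \(\widetilde g_i^{p^{\varepsilon-1}}\) under \(\Gamma_\varepsilon(f,m)\) (via the order subroutine). It then updates \(\cset \leftarrow \cset \cup \bigodot_i\{1,\order_\varepsilon(f,g_i)\}\).

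\textbf{Correctness.} We argue by induction on \(\varepsilon\) that after level \(\varepsilon\) the maintained set equals \(\cset(\Gamma_\varepsilon(f,m))\).

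For the base case \(\varepsilon=1\), \cref{lem:lcm-product} together with \cref{lem:cset-base} and \cref{the:order-pow} give \(\cset(\Gamma_1(f,m))=\bigodot_i\{1,\ell_i,p\ell_i,\dots,p^{\tau_i}\ell_i\}\). Here \(\ell_i=\order(\bar f,G_i)\) is obtained from \cref{lem:g-hat-field}.

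For the inductive step, \cref{the:cset-lcm-construction} gives \(\cset(\Gamma_\varepsilon(f,m))=\cset(\Gamma_{\varepsilon-1}(f,m))\cup\set H\). The only thing to check is that the algorithm computes each \(\order_\varepsilon(f,g_i)\) correctly without lifting the factorization. This is exactly \cref{pro:g-hat}, applied with \(G=G_i^{k_i}\), \(\hat G=\hat G_i\), and \(\widetilde g_i\) any integer lift of \(\hat G_i\). Its hypotheses hold: \(\gcd(G_i^{k_i},\hat G_i)=1\) because the \(G_i\) are distinct irreducibles not dividing \(M_2\). The Hensel lifts \(g,\hat g\) exist by \cref{cor:deco-bij} and \cref{lem:hensel}, but the algorithm never needs them.

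The period search can be restricted to candidates of the form \(p^j\ell_i\), by \cref{cor:deco-strucutre}-type reasoning, namely \cref{cor:cset-structure} and \cref{the:omega}. Moreover, \(\order_\varepsilon(f,g_i)\in\{\order_{\varepsilon-1}(f,g_i),\,p\cdot\order_{\varepsilon-1}(f,g_i)\}\). So at each level a single test \(f^{L}\widetilde g_i^{p^{\varepsilon-1}}\equiv \widetilde g_i^{p^{\varepsilon-1}} \pmod{(m,p^\varepsilon)}\), with \(L=\order_{\varepsilon-1}(f,g_i)\), decides the new order.

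\textbf{Complexity.} The cost is a sum of four terms.
\begin{enumerate}
\item \emph{Factorization of \(\bar m\) over \(\field_q\).} This is done with a randomized factorization algorithm, hence the \emph{expected} time. I expect this to contribute the \(n^{1+\log\log n}\)-type term, together with the cost of computing \(\ell_i=\order(\bar f,G_i)\). The latter needs the factorization of \(q^{\deg G_i}-1\) or an equivalent order computation, and is the likely source of the \(\log\log n\) exponent.
\item \emph{The \(\bar f\)-decomposition.} This uses gcd computations, at \(O^\sim(\mathcal M_1(n))\).
\item \emph{Powers \(\widetilde g_i^{p^{\varepsilon-1}}\) modulo \((m,p^\varepsilon)\).} Computed incrementally, each level costs one \(p\)-th power, i.e. \(O(\log p)\) multiplications. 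Over \(r\le n\) factors and \(e\) levels this gives \(O^\sim(ne\,\mathcal M_e(n))\).
\item \emph{Order tests.} Each test \(f^{L}\) needs \(O(\log L)\) modular multiplications, where \(L\le q^{n}p^{e}\), so \(\log L=O(n+e)\) up to \(\log q\). Repeated over \(r\le n\) factors and \(e\) levels, this yields the \((n^2+e^2)\mathcal M_e(n)\) terms. Computing \(f^L\) once per level and reusing it across factors is what avoids an extra factor.
\end{enumerate}
Finally, the \(\lcm\)-product sets \(\set H\) are bounded using the fact that \(\cset\) has at most \(O(ne)\)-ish distinct elements. Computing lcms of integers with \(O(n+e)\) bits then gives the \(n^2e^2\) term.

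The main obstacle will be this bookkeeping. The difficulty is not making \(|\set H|\) exponential in \(r\), and getting the exact exponent \(1+\log\log n\) from the order/factorization step. I would first try to show that all elements of \(\set H\) have the form \(p^j\cdot\lcm_{i\in K}\ell_i\) with \(j\le \tau+e\). This reduces \(\set H\) to a polynomial-size union over the distinct values of \(\ell_i\), which is where the divisor count of \(n\) enters.
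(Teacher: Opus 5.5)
Your correctness argument is the paper's. You run induction on $\varepsilon$ via \cref{the:cset-lcm-construction}, use \cref{pro:g-hat} to get $\order_\varepsilon(f,g_i)$ from the orbit of $\widetilde g_i^{p^{\varepsilon-1}}$ without lifting the factorization, and use \cref{the:omega} with \cref{lem:irred-union} to reduce each level to one test with $L=\order_{\varepsilon-1}(f,g_i)=p^{\omega_{i,\varepsilon-1}}\order(\bar f,G_i^{k_i})$, which is exactly lines 8--9. Your choice $\hat G_i=\bar m/G_i^{k_i}$ is the one \cref{pro:g-hat} and \cref{ex:1} use. Line 4 of \cref{alg:cycle-lengths} as printed divides $M_1$ instead. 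Then $\widetilde g_i$ is a unit modulo the lift of $M_2$, so the test would require $f^L\equiv 1$ modulo that factor. This is impossible for nonconstant $M_2$, because $f$ is nilpotent there.

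The gap is in the running time. The ``expected'' in the statement is an average-case bound. It rests on a fact you never invoke: a degree-$n$ polynomial has $r\in O(\log n)$ distinct irreducible factors on average. Every term in the claimed bound is derived from it.
\begin{itemize}
\item The $n^{1+\log(\log(n))}$ term does not come from factoring $q^{\deg G_i}-1$; the paper reads that from a precomputed table. It comes from line 3 enumerating all $\lcm$-combinations of the orders $\order(\bar f,G_i^{\kappa})$. By \cref{the:order-pow} each $i$ contributes only $O(\log n)$ distinct values, giving $(\log n)^{\log n}=n^{\log(\log(n))}$ combinations at $O^{\sim}(n)$ each.
\item \cref{alg:order} costs $O^{\sim}(n^2\mathcal{M}_1(n))$. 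Loosened to $\mathcal{M}_e$, this is the $n^2\mathcal{M}_e(n)$ term.
\item The $n^2e^2$ term is $e$ levels, times $2^r=O(n)$ tuples in line 11, times $O(\log n)$ binary $\lcm$s of integers below $q^{en}$.
\item The $(ne+e^2)\mathcal{M}_e(n)$ term is $e$ levels of $O(\log n)$ exponentiations with exponents below $p^eq^n$.
\end{itemize}

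Working with $r\le n$ instead, your substitutes do not close the bound.
\begin{itemize}
\item The exponents $L_i$ depend on $i$, so $f^L$ cannot be shared across factors. Then $r$ separate tests per level cost $(n^2e+ne^2)\mathcal{M}_e(n)$ over all levels, which exceeds the claim.
\item The assertion $\lvert\cset\rvert=O(ne)$ is false in general, and so is your plan to make $\set{H}$ polynomial-size via the form $p^j\lcm_{i\in K}\ell_i$. If the $\ell_i$ are pairwise coprime, all $2^r$ subset-$\lcm$s are distinct. Divisor counts of $n$ only help in special cases such as $f=x$, $m=x^n-1$.
\item Line 11 enumerates all $2^r$ tuples of $\prod_i\set{K}_i$ regardless, so counting distinct outputs would not bound its cost.
\end{itemize}
To obtain the stated bound you have to import $r\in O(\log n)$, plus the Hardy--Ramanujan estimate used inside \cref{alg:order}, as the paper does.
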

\begin{proof}
Initially, we compute the \(f\)-decomposition of \(m\) in \(\pring{R_1}\) based on the method described in \cref{sec:hensel} to obtain \(M_1, M_2 \in \pring{R_1}\) satisfying
\(m(x) \equiv M_1(x) M_2(x) \pmod{p}\) in line 1.
Then in line 2, we factor \(M_1(x) = \prod_{i=1}^rG_i(x)^{k_i}\), where \(G_i\) irreducible and mutually coprime.
Next, we apply \cref{the:qureshi} to compute \(\cset(\Gamma_1(f,m))\) in line 3.

A fundamental algorithm for determining \(\order(x,G_i)\) is described in \cite[p.~87]{Lidl1996}, which can be generalized via substitution to
\(\order(F,G_i)\) for irreducible \(F \in \pring{\field_q}\).
We propose \cref{alg:order}, reflecting and generalizing the description presented in \cite{Lidl1996}.
Let \(\Phi_q(G) = \left\vert (\qring{\pring{\field_q}}{\ideal{G}})^\times\right\vert\).
\Cref{alg:order} relies on the fact that \(\order(\bar{f},G_i) \mid \Phi_q(G_i) = q^{\deg(G_i)} - 1\) \cite{Panario2018}.
The loops in lines 7--13 of \cref{alg:order} compute \(\order(\bar{f},G_i)\) by identifying the factors of \(\Phi_q(G_i)\) that make up the order.

We continue with the discussion of \cref{alg:cycle-lengths}.
Suppose \(m_1 \in \pring{R_e}\) such that \(\bar{m}_1 = M_1\), and \(m_1(x) = g_1(x)g_2(x) \cdots g_r(x)\) with \(\bar{g}_i = G_i^{k_i}\).
Line 4 computes the reciprocals \(\hat{G}_i\), followed by choosing polynomials \(\widetilde{g}_i \in \pring{R}\) in line 5, which are later
used to determine the value of \(\omega_{i,\varepsilon}\).
In lines 7--12, \(\cset(\Gamma_\varepsilon(f,m))\) is computed iteratively for \(\varepsilon=2,\dots,e\).
At each step, we determine whether the maximum cycle length in the bijective subsystem \(\sys(\qring{\pring{R_\varepsilon}}{\ideal{g_i}}, \Gamma_\varepsilon(f, g_i))\)
is multiplied by \(p\) or not in accordance with \cref{the:omega}.
To this end, lines 8--9 utilize \cref{pro:g-hat} to check if the length of the cycle containing the vertex \(\widetilde{g}_i(x)^{p^{\varepsilon-1}} \pmod{(m, p^{\varepsilon})}\)
in \(\fgraph(\Gamma_\varepsilon(f,m))\) increases compared to \(\widetilde{g}_i(x)^{p^{\varepsilon-2}} \pmod{(m, p^{\varepsilon-1})}\) in \(\fgraph(\Gamma_{\varepsilon-1}(f,m))\).
Let \(\ell_{\varepsilon-1} = \order_{\varepsilon-1}(f,g_i)\).
Our method, informed by \cref{pro:g-hat}, verifies the equivalence
\[f(x)^{\ell_{\varepsilon-1}} \cdot \widetilde{g}_i(x)^{p^{\varepsilon-1}} \equiv \widetilde{g}_i(x)^{p^{\varepsilon-1}} \pmod{(m, p^\varepsilon)}.\]
If true, the maximal cycle length in \(\cset(\Gamma_\varepsilon(f,g_i))\) does not increase compared to \(\cset(\Gamma_{\varepsilon-1}(f,g_i))\).
Otherwise, it increases by a factor of \(p\).
We remark that in line 8, we do not compute \(\omega(\Gamma_\varepsilon(f,g_i), \ell)\) where \(\ell = \max(\csetp(\Gamma_\varepsilon(f,g_i)))\) (see \cref{lem:irred-union}).
Instead, if \(\cset(\Gamma_1(f,g_i)) = \{1, \ell, p\cdot \ell, \dots, p^{\tau_1}\cdot\ell\}\) for some \(\tau_1 \in \natnum\),
then we compute the largest nonnegative integer \(\omega_{i,\varepsilon}\) such that \(p^{\omega_{i,\varepsilon} + \tau_1}\cdot\ell \in \cset(\Gamma_\varepsilon(f,g_i))\).
Both approaches are equivalent, as \((\omega(\Gamma_\varepsilon(f,g_i),\ell) - \omega(\Gamma_1(f,g_i), \ell)) - (\omega_{i,\varepsilon} - \omega_{i,1}) = \tau_1\).
Consequently, we set \(\omega_{i,1} = 0\) in line 6.
Once \(\omega_{i,\varepsilon}\) is updated in lines 8--9, \(\cset(\Gamma_\varepsilon(f,m))\) is computed in lines 10--12 as per \cref{the:cset-lcm-construction}.

We proceed with an analysis of the runtime complexity of \cref{alg:cycle-lengths}.
Its actual time complexity depends on the polynomial representation and the underlying algorithms for individual operations.
Certain steps may be split into multiple computations, and intermediate results can be stored for efficiency; however, such optimizations are not considered here.
In this analysis, we make typical assumptions and focus on average-case performance.
We assume the following runtime complexities for standard polynomial operations:
\begin{itemize}
    \item multiplication and division with remainder: \(O(\mathcal{M}_e(n))\) \cite{Mullen2013};
    \item factorization over \(\field_q\) (probabilistic): \(O(n\cdot \mathcal{M}_1(n)\cdot\log(nq))\) \cite{Gathen1999};
    \item modular exponentiation \(f(x)^k \pmod{m}\): \(O(\mathcal{M}_e(n)\cdot\log(k))\) via square-and-multiply \cite{Gathen2001}.
\end{itemize}
As all computations are modulo \(m\), and polynomial degrees are less than \(\deg(m)=n\). 

Lines 1--6 operate over \(R_1 \isomorphic \field_q\).
Line 1 involves factoring \(f\) and \(m\) and identifying common factors.
It has been shown that the expected number of distinct irreducible factors of a polynomial of degree \(n\) is \(O(\log(n))\) \cite{Panario2004},
yielding \(r\in O(\log(n))\).
Since identification of common factors requires iteration over the factors of \(f\) for each factor of \(m\), it has complexity of \(O(\log(n)^2)\).
Line 2 is again a factorization of \(M_1\).
Thus, lines 1--2 have complexity of \(O(n\cdot \mathcal{M}_1(n) \cdot \log(nq))\).

The complexity of line 3 is more subtle, involving three sequential steps by following \cref{the:order-pow,pro:prod-cycles}:
\begin{enumerate}
    \item Computing \(\order(\bar{f},G_i)\), where \(G_i\) is irreducible: 
    This is done via \cref{alg:order}, and its complexity is as follows.
    Lines 1--5 are basic.
    Line 6 requires the prime factorization of \(q^m-1\), with \(m = \deg(G)\).
    Assuming a precomputed lookup table, this is \(O(1)\).
    The outer loop in line 7 is analyzed using the Hardy-Ramanujan theorem: \(v \approx \log(n\cdot\log(q))\), yielding \(O(\log(n\log(q)))\) as an upper bound for the number of iterations.
    The worst case for the number of iterations of the inner loop in line 8 is given by the hypothetical case \(s=1\) such that \(q^m-1 = p_1^{r_1}\) and hence \(r_1 \propto m \cdot \log(q)\).
    Since \(m < n\), the inner loop iterates no more than \(O(n \cdot \log(q))\).
    In line 9, modular polynomial exponentiation with exponent less than \(q^m - 1 < q^n\) has complexity of no more than \(O(n \cdot \mathcal{M}_1(n) \cdot \log(q))\).
    Overall, \cref{alg:order} operates in expected time \(O(n^2 \cdot \mathcal{M}_1(n) \cdot \log(n\log(q)) \cdot \log(q)^2)\), or \(O^{\sim}(n^2 \cdot \mathcal{M}_1(n))\).
    
    \item Determining \(\order(\bar{f},G_i^{\kappa_i})\) for \(\kappa_i=1,\dots,k_i\), \(i=1,\dots,r\): 
    Considering \cref{the:order-pow}, an important observation is that for each \(G_i\), the terms \(G_i^{\kappa_i}\) can be grouped into mutually disjoint sets \(H_{i,1}, \dots, H_{i,\vartheta_i}\), \(\vartheta_i \in O(\log(n))\), 
    of exponentially increasing size, such that for any set \(H_{i,u}\), \(u=1, \dots, \vartheta_i\), there exists \(t_{i,u}\) satisfying \(\order(\bar{f}, G') = p^{t_{i,u}}\order(\bar{f}, G_i)\) for any \(G' \in H_{i,u}\).
    For example, let us consider the case \(p=2\), \(r=1\), \(f(x) = x\), and \(G_1(x)^{k_1} = (x^3+x+1)^{16}\).
    In this case, we observe that \(\vartheta_1 = 5\), and the sets \(H_{1,1}\) to \(H_{1,5}\) are listed in \cref{tab:factor-order-groups}.
    \begin{table}
        \centering
        \begin{tabular}{ll>{$}l<{$}@{${}={}$}l>{$}l<{$}@{${}\eqcolon{}$}>{$}l<{$}}
            \hline
            Sets \(H_{1,u}\) & \multicolumn{1}{l}{Factors} & \multicolumn{2}{l}{Order} & \multicolumn{2}{l}{\(t_{1,u}\)} \\
            \hline
            \multicolumn{1}{c}{}&\multicolumn{1}{c}{}&\multicolumn{2}{c}{}&\multicolumn{2}{c}{}\\[-1em]
             \(H_{1,1}\) & \(\{G_1\}\)                    & \order(\bar{f}, G_1)    & 7   & t_{1,1} & \tau_{1,1} = 0 \\
             \(H_{1,2}\) & \(\{G_1^2\}\)                  & p\order(\bar{f}, G_1)   & 14  & t_{1,2} & \tau_{1,2} = 1 \\
             \(H_{1,3}\) & \(\{G_1^3, G_1^4\}\)           & p^2\order(\bar{f}, G_1) & 28  & t_{1,3} & \tau_{1,3} = \tau_{1,4} = 2 \\
             \(H_{1,4}\) & \(\{G_1^5, \dots, G_1^8\}\)    & p^3\order(\bar{f}, G_1) & 56  & t_{1,4} & \tau_{1,5} = \cdots = \tau_{1,8} = 3 \\
             \(H_{1,5}\) & \(\{G_1^9, \dots, G_1^{16}\}\) & p^4\order(\bar{f}, G_1) & 112 & t_{1,5} & \tau_{1,9} = \cdots = \tau_{1,16} = 4 \\
             \hline
        \end{tabular}
        \caption{Sets of powers of \(G_1(x) = x^3+x+1\) with identical order for \(p=2\), \(r=1\), \(f(x)=x\), and \(k_1=16\).}
        \label{tab:factor-order-groups}
    \end{table}
    
    Our approach is to first compute \(F' \equiv f^{\order(\bar{f},G_i)} \pmod{(m, p)}\).
    This step consists of modular exponentiation and has a complexity of no more than \(O(n\cdot \mathcal{M}_1(n)\cdot\log(q))\), because \(\order(\bar{f}, G_i) \mid q^{\deg(G_i)} - 1 < q^n\).
    After that, we repeatedly perform a binary search to identify the sets \(H_{i,u}\) and the corresponding values of \(t_{i,u}\).
    The binary search has to be performed \(O(\log(n))\) times (for each set), where each search is \(O(\log(n))\), because \(k_i < n\).
    In every step of each search, we check whether \(F'^{p^{t_{i,u}}} \equiv f \pmod{(m, p)}\) is true for the current value of \(t_{i,u}\), which essentially is polynomial exponentiation and
    has complexity \(O(\mathcal{M}_1(n) \cdot \log(n) \cdot \log(p))\), because \(t_{i,u} \in O(\log(n))\).
    The entire procedure must be repeated \(r\) times for each \(G_i\).
    Recall from above that the expected number of \(r\) is \(O(\log(n))\).
    Hence, a loose upper bound of the overall expected runtime of this step is \(O(\log(n)\cdot(n\cdot \mathcal{M}_1(n) \cdot \log(q) + \log(n)^2\cdot(\mathcal{M}_1(n) \cdot \log(n) \cdot \log(p))))\),
    which simplifies to \(O(n \cdot \mathcal{M}_1(n) \cdot \log(n) \cdot \log(q))\), or \(O^{\sim}(n\cdot\mathcal{M}_1(n))\).
    
    \item Calculating orders for all remaining divisors of \(M_1(x)\):
    We must compute the set
    \[\{\lcm(o_1, \ldots, o_r) : o_i \in \{\order(\bar{f},G_i^{\kappa_i}) : \kappa_i = 1,\dots, k_i\}, 1 \le i \le r\}.\]
    The Euclidean algorithm computes \(\lcm(o_a, o_b)\) in \(O(\log(\min(o_a, \allowbreak o_b)))\) for any \(1 \le a, b \le r\).
    Clearly, \(o_i < q^n\), and thus the complexity of computing \(\lcm(o_a, o_b)\) is at most \(O(n\cdot \log(q))\).
    For each combination, \(r-1\) pairwise \(\lcm\) are needed.
    Since \(r \in O(\log(n))\), \(r\)-ary \(\lcm\) has expected time complexity of \(O(n \cdot \log(n) \cdot \log(q))\).
    This must be done for every choice of \(o_i\).
    From the discussion of the previous step, we know that we have \(\vartheta_i \in O(\log(n))\) different choices for each \(o_i\).
    Therefore, this step has an overall expected time complexity of no more than \(O(\log(n)^{\log(n)} \cdot n \cdot \log(n) \cdot \log(q))\).
    Using the identity \(\log(n)^{\log(n)} = n^{\log(\log(n))}\) and soft-\(O\) notation, we obtain \(O^{\sim}\bigl(n^{1+\log(\log(n))}\bigr)\).
\end{enumerate}
Since the first part dominates the second part, the overall expected runtime complexity of line 3 is given by
\(O^{\sim}\bigl(n^{1+\log(\log(n))}+n^2\cdot\mathcal{M}_1(n)\bigr)\).
Thus, line 3 dominates line 4 (polynomial division) and the trivial operations in lines 5--6.

The body of the loop in \cref{alg:cycle-lengths} executes \(O(e)\) iterations.
It is a well-known fact that \(\Phi_q(\bar{f}) \le q^{\deg(\bar{f})}-1\).
Therefore, line 8, dominated by polynomial exponentiation, is no worse than \(O(\mathcal{M}_e(n) \cdot \log(p^eq^n))\)
and thus grows no more than \(O((n+e)\cdot \mathcal{M}_e(n) \cdot \log(q))\).
Line 9 is not more complex than line 8; lines 10 and 12 involve straightforward set operations.
In line 11, each \(a_i\) is bounded by \(p^e\cdot(q^n-1) < q^{en}\) and has two possible options.
Since the expected number of \(r\) is \(O(\log(n))\), we have \(O(2^{\log(n)}) = O(n)\) computations of \(r\)-ary \(\lcm\),
each of which requiring \(O(\log(n))\) computations of 2-ary \(\lcm\), which in turn have complexity \(O(\log(q^{en}))\)
based on the upper bound of \(a_i\).
Therefore, line 11's time complexity grows with \(O(n^2e \cdot \log(n) \cdot \log(q))\).
Consequently, the loop body is dominated by lines 8 and 11, and the loop's complexity as a whole grows as
\(O^{\sim}(n^2e^2 + (e^2 + ne) \cdot \mathcal{M}_e(n))\).

Summing up, \cref{alg:cycle-lengths} is dominated by the order computations in line 3, as well as the loop in lines 7--13 that is dominated by lines 8 and 11,
totally resulting in an expected quasi-polynomial time complexity of
\(O^{\sim}\bigl(n^{1+\log(\log(n))} + n^2e^2 + n^2\cdot\mathcal{M}_1(n) + (e^2 + ne)\cdot\mathcal{M}_e(n)\bigr)\).
For the sake of simplicity, a slightly looser bound is given as \(O^{\sim}\bigl(n^{1+\log(\log(n))} + n^2e^2 + (n^2+e^2 + ne)\cdot\mathcal{M}_e(n)\bigr)\).
\end{proof}

There exist other algorithms that can be used to compute the set of cycle lengths.
The algorithm proposed in \cite{Wei2016} computes not only the set of cycle lengths, but also the complete set of cycles.
The stated runtime complexity is essentially of order \(n^3\) for a single cycle length.
However, their analysis neglects the need to iterate over the factors of the maximum cycle length to enumerate the full set \(\cset(\Gamma_e(f,m))\).
In our algorithm, we only compose cycle lengths that actually exist in the final set, which is central to the quasi-polynomial runtime observed.

The strength of our approach lies in avoiding costly matrix-based operations for \glspl{lfds} and leveraging polynomial factorization,
which scales more favorably than \(O(n^3)\).
A limitation is that our method does not construct every explicit cycle.
Nevertheless, in \cref{lem:z-lift,pro:g-hat}, we showed a simple method to explicitly construct a representative cycle for each cycle length that appears in the system's dynamics.

The software tool \gls{adam} \cite{Hinkelmann2011} computes all concrete cycles for a given length by solving systems of polynomial equations using Gröbner bases.
However, without prior knowledge of available cycle lengths, one must attempt every candidate length and check for solutions in the associated system.

\subsection{Reducing the Number of Factors}
As we have seen, the computation of \(\cset(\Gamma_e(f,m))\) relies on the factorization of \(m_1\) into irreducible and mutually coprime factors.
In the computation of \(\fgraph(\Gamma_1(f,m))\) in line 3 of \cref{alg:cycle-lengths}, as well as in the computations regarding \(\set{H}\) in line 10,
we have to consider all combinations of the factors.
Also, the computations of \(\omega_{i,\varepsilon}\) and \(\set{K}_i\) have to be done for each lifted factor.
There are cases in which this is unfavorable, as the following example demonstrates.

\begin{example}
    \label{exp:many-factors}
    Let \(f(x)=x^{4094}+x\), \(m(x) = x^{4095}-1\), and \(p=2\).
    The \(f\)-decomposition of \(m\) in \(\pring{R_1}\) is given by \(\bar{m} = M_1M_2\) with \(M_2(x) = x+1\) and \(M_1(x) = \bar{m}(x)/M_2\).
    Considering the factorization of \(M_1(x) = G_1(x) \cdots G_r(x)\) into irreducible and mutually coprime factors \(G_i\), we have that \(r=351\) (including the unit factor).
    However, when computing the orders \(\order(\bar{f},G_i)\), we can observe that most of the \(G_i\) have the same order.
    This is illustrated in \cref{fig:order-dist}, which shows the number of divisors \(G_i\) with the same order.
    For example, the largest fraction of divisors has order \(4095\).
    In total, there are only 22 distinct orders, which is much fewer than the 351 divisors.
    \begin{figure}[htb]
        \centering
        \includegraphics[width=\linewidth]{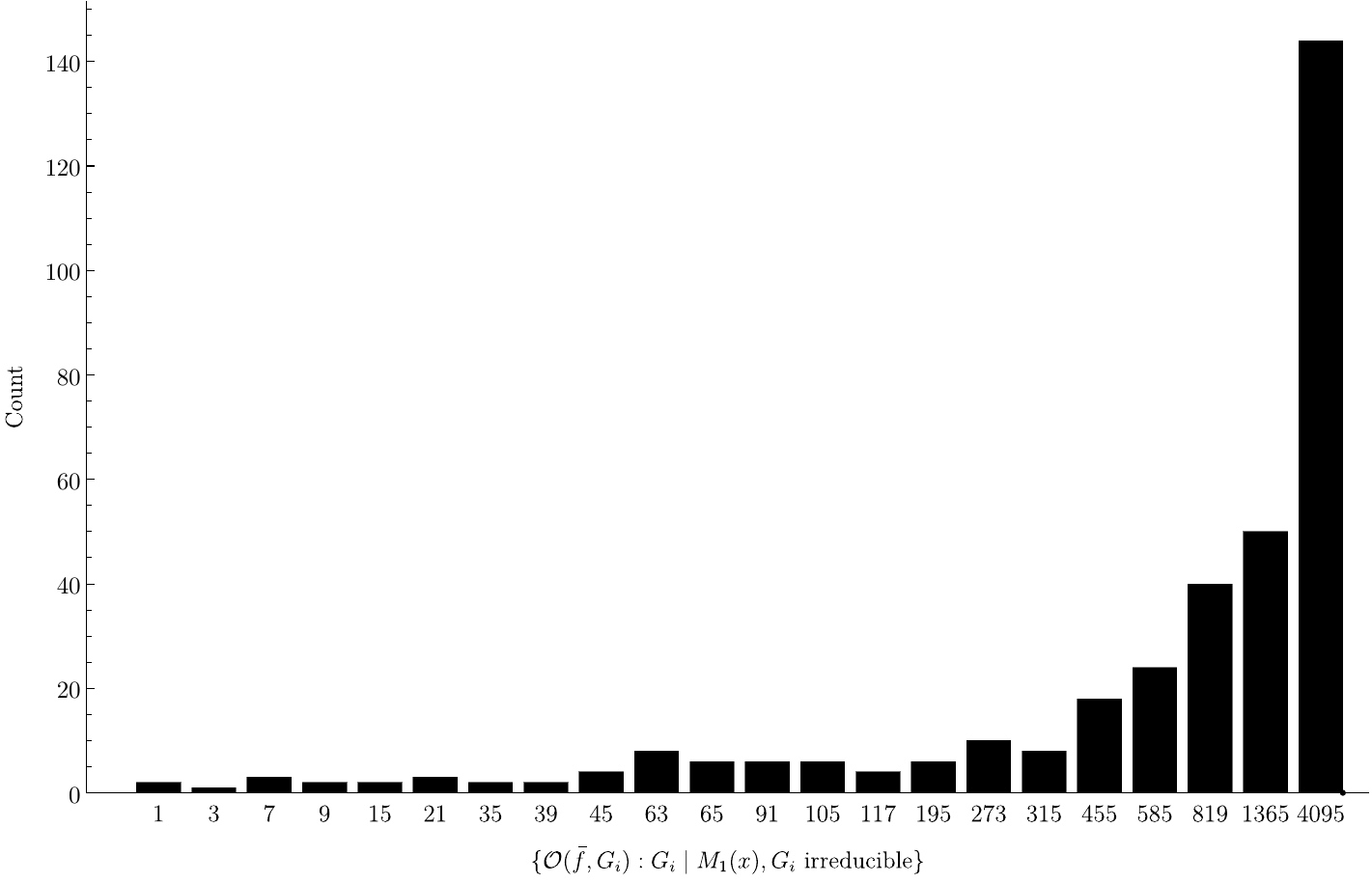}
        \caption{Number of irreducible divisors \(G_i\) of \(M_1(x)\) from \cref{exp:many-factors}, aggregated per order.}
        \label{fig:order-dist}
    \end{figure}
\end{example}

\Cref{exp:many-factors} demonstrates that for certain choices of \(f\) and \(m\), numerous divisors \(G_i\) of \(M_1\) have the same order.
In the following, we show how to exploit this property and reduce the number of factors that we need to consider when computing \(\cset(\Gamma_e(f,m))\).
For this purpose, we introduce the following definition.
\begin{definition}
    \label{def:eq-rel}
    \emph{(Order Equivalence Relation)} Let \(g_1, g_2 \in \pring{R}\) and let \(F=\bar{f}\), satisfying \(\gcd(F, \bar{g}_i) = 1\) for \(1 \le i \le 2\).
    Then the order equivalence relation \(\erel{F}\) is defined as follows:
    \[g_1 ~ \erel{F} ~ g_2 \quad \mbox{if and only if} \quad \order_1(f,g_1) = p^k\cdot \order_1(f,g_2) \text{ for some } k \in \natnum.\]
\end{definition}

Suppose that \(\bar{m}_1(x) = G_1(x) \cdots G_r(x)\), where the polynomials \(G_i(x) = G_i'(x)^{k_i}\), \(i=1,\dots,r\), such that \(G_i'\) are irreducible and mutually coprime.
Furthermore, let \(\set{B} = \{G_i' : 1 \le i \le r\}\).
Then \(\set{B}/\erel{F} = \{\set{B}_1 , \dots, \set{B}_\eta\}\) defines a partition of \(\set{B}\).
Note that every element in \(\set{B}_j\), \(1 \le j \le \eta\), has the same polynomial order over \(R_1\) with respect to \(f\).
Most notably, it is easy to check that \(\eta \le r\).
Now, we can define the polynomials
\begin{equation}
    \label{eq:theta}
    \Theta_j(x) \coloneqq \prod_{G_{i}' \in \set{B}_j} G_i'(x)^{k_i}.
\end{equation}
Clearly, the polynomials \(\Theta_j\) are mutually coprime and define subsystems \(\sys(\qring{\pring{R_1}}{\ideal{\Theta_j}}, \Gamma_1(f,\Theta_j))\),
and therefore form a decomposition of the bijective subsystem of the \gls{lfds} as follows
\begin{equation}
    \label{eq:deco-theta}
    \Gamma_1(f,m_1) \isomorphic \bigotimes_{j=1}^\eta \Gamma_1(f,\Theta_j).
\end{equation}

\begin{proposition}
    \label{pro:cset-theta-field}
    Let \(\bar{m}_1(x) = \prod_{j=1}^{\eta}\Theta_j(x)\), where the polynomials \(\Theta_j\) are defined as in \cref{eq:theta}.
    Furthermore, let \(H_j\) be a nonzero and square free divisor of \(\Theta_j\), and \(\ell_j = \order(\bar{f},H_j)\).
    Then \(\cset(\Gamma_1(f,\Theta_j)) = \{1, \ell_j, p\ell_j, \dots, p^{\tau_j}\ell_j\}\) for some \(\tau \in \natnum\).
\end{proposition}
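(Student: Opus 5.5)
We decompose \(\Gamma_1(f,\Theta_j)\) further and then apply the \(\lcm\)-product. Write \(\Theta_j=\prod_{G_i'\in\set{B}_j}G_i'^{k_i}\). The factors \(G_i'^{k_i}\) are mutually coprime, so the Chinese Remainder Theorem (\cref{the:crt}), used as in \cref{cor:deco-bij}, gives
\[\Gamma_1(f,\Theta_j)\isomorphic\bigotimes_{G_i'\in\set{B}_j}\Gamma_1(f,G_i'^{k_i}).\]
By \cref{lem:lcm-product}, which is stated for \(m_1\) but whose proof only uses this product decomposition into bijective factors, we get \(\cset(\Gamma_1(f,\Theta_j))=\bigodot_{i}\cset(\Gamma_1(f,G_i'^{k_i}))\).

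Next we compute each factor. Since \(G_i'\) is irreducible and coprime to \(\bar f\), \cref{the:qureshi} together with \cref{the:order-pow} gives
\[\cset(\Gamma_1(f,G_i'^{k_i}))=\{1,\ell_i,p\ell_i,\dots,p^{\sigma_i}\ell_i\},\qquad \ell_i=\order(\bar f,G_i').\]
In \cref{the:qureshi} the divisors \(G\mid G_i'^{k_i}\) are the powers \(G_i'^{t}\) with \(0\le t\le k_i\), and their orders are \(p^{k(t)}\ell_i\) with \(k(t)\) running through every integer from \(0\) up to its maximum \(\sigma_i\). This is the analogue of \cref{lem:cset-base} for general \(f\).

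By definition of \(\erel{F}\), all \(\ell_i\) with \(G_i'\in\set{B}_j\) agree up to a power of \(p\). Each \(\ell_i\) divides \(q^{\deg G_i'}-1\), so \(p\nmid\ell_i\). Two integers coprime to \(p\) that differ by a factor \(p^k\) must be equal, so all \(\ell_i\) in the class equal a common value \(\ell\). The \(\lcm\)-product of sets of the form \(\{1,\ell,\dots,p^{\sigma_i}\ell\}\) is therefore \(\{1,\ell,p\ell,\dots,p^{\tau_j}\ell\}\) with \(\tau_j=\max_i\sigma_i\). Indeed, \(\lcm(p^a\ell,p^b\ell)=p^{\max(a,b)}\ell\) and \(\lcm(1,x)=x\), and every value in this range is attained.

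It remains to show \(\ell_j=\order(\bar f,H_j)=\ell\). A nonzero squarefree divisor \(H_j\) of \(\Theta_j\) is a product of distinct \(G_i'\) from \(\set{B}_j\), possibly times a unit. By \cref{lem:order-lcm-fq}, \(\order(\bar f,H_j)=\lcm(\ell_i)=\ell\). The degenerate case where \(H_j\) is a unit gives order \(1\), so we take \(H_j\) non-constant, which is implicitly intended.

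The main obstacle is justifying that the exponents in the cycle set of a single primary factor \(G_i'^{k_i}\) fill the full range \(0,\dots,\sigma_i\) for general \(f\) rather than \(x\). We will derive this from \cref{the:order-pow}: as \(t\) increases from \(1\) to \(k_i\), the exponent \(k(t)=\lceil\log_p(t/s)\rceil\) is non-decreasing and increases by at most one at each step. Hence no intermediate power of \(p\) is skipped.
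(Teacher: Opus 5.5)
Your proposal is correct and follows essentially the same route as the paper. You decompose \(\Theta_j\) into its primary factors \(G_i'^{k_i}\), read off each cycle set \(\{1,\ell,p\ell,\dots,p^{\sigma_i}\ell\}\) from \cref{the:qureshi,the:order-pow}, and combine them through the common order \(\ell_j\) by taking the largest exponent. Along the way you make explicit what the paper leaves implicit: the \(\lcm\)-product step, that \(p\nmid\ell_i\) forces equal orders within a class, that no power of \(p\) is skipped, and that \(H_j\) must be non-constant.

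The only slip is cosmetic: the exponent should be \(k(t)=\max\{0,\lceil\log_p(t/s)\rceil\}\). Without the maximum, \(\lceil\log_p(t/s)\rceil\) is negative when \(t\le s/p\), whereas the exponent there is \(0\).
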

\begin{proof}
    Since \(H_j\) is squarefree, it is of the form \(H_j = \prod_{G_i'\in\set{B}_j} G_i'\), where the polynomials \(G_i\) are irreducible.
    Assume that \(\set{B}_j = \{G_{j,1}',\dots,G_{j,r_j}'\}\).
    Moreover, by definition of \(\set{B}_j\) we have that \(\order(\bar{f},G_{j,1}') = \dots = \order(\bar{f},G_{j,r_j}') = \ell_j\).
    Suppose that \(\Theta_j(x) = \prod_{h=1}^{r_j}G_{j,h}'(x)^{k_{h}}\) and let \(G_{j,h}(x) = G_{j,h}'(x)^{k_h}\).
    By \cref{the:qureshi,the:order-pow} we have that \(\cset(\Gamma_1(f,G_{j,h})) = \{1, \ell_j, p\ell_j, \dots, p^{\tau_{j,h}}\ell_j\}\) for some \(\tau_{j,h} \in \natnum\).
    Let \(\tau_j = \max(\tau_{j,1}, \dots, \tau_{j,r_j})\).
    Then, it follows that \(\cset(\Gamma_1(f,\Theta_j)) = \{1, \ell_j, p\ell_j, \dots, p^{\tau_j}\ell_j\}\).
\end{proof}

\begin{proposition}
    \label{pro:theta-lift}
    Let \(\bar{m}_1(x) = \prod_{j=1}^\eta\Theta_j(x)\), where the polynomials \(\Theta_j\) are defined as in \cref{eq:theta}.
    Then there exists polynomials \(\theta_1, \dots, \theta_\eta \in \pring{R_e}\) with \(\bar{\theta}_j = \Theta_j\) such that
    \(\Gamma_e(f,m_1)\) decomposes into a product of bijective subsystems as follows
    \[\Gamma_e(f,m_1) \isomorphic \Gamma_e(f,\theta_1) \otimes \cdots \otimes \Gamma_e(f,\theta_\eta).\]
\end{proposition}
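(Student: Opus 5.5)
The plan is to repeat the argument of \cref{cor:deco-bij}, with the coarser factorization $\bar{m}_1=\Theta_1\cdots\Theta_\eta$ in place of the factorization into primary parts. First, note that the $\Theta_j$ are pairwise coprime in $\pring{\field_q}$. Each $\Theta_j$ is a product of powers of irreducibles from the block $\set{B}_j$, and the blocks of the partition $\set{B}/\erel{F}$ are disjoint. Since the $G_i'$ are distinct irreducibles, $\gcd(\Theta_j,\Theta_{j'})=1$ for $j\neq j'$.

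Next, lift this factorization to $\pring{R_e}$ with $m_1$ as the modulus. Since $\bar{m}_1=M_1$ is monic, we can take $m_1$ monic. We then apply \cref{lem:hensel} inductively on $\eta$. Write $\bar{m}_1=\Theta_1\cdot(\Theta_2\cdots\Theta_\eta)$. Because the two factors are coprime over the field $\field_q$, there are B\'ezout coefficients $s,t$ with the degree bounds required by \cref{lem:hensel}. Taking $m=p$ and $k=e$, \cref{lem:hensel} yields $\theta_1$ and $\rho_1$ with $\bar\theta_1=\Theta_1$, $\bar\rho_1=\Theta_2\cdots\Theta_\eta$ and $m_1\equiv\theta_1\rho_1\pmod{p^e}$. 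We can choose the factor that the lemma makes monic, and since $m_1$ is monic the other factor is then monic up to a unit. We repeat the same step on $\rho_1$. After $\eta-1$ steps we obtain $m_1=\theta_1\cdots\theta_\eta$ in $\pring{R_e}$ with $\bar\theta_j=\Theta_j$. This mirrors the second part of \cref{cor:deco-bij}.

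For coprimality in $\pring{R_e}$, we use that $\bar\theta_j$ and $\bar\theta_{j'}$ are coprime modulo $p$ for $j\neq j'$. Then \cref{lem:lifting-Bezout} gives $\theta_j\pring{R_e}+\theta_{j'}\pring{R_e}=\pring{R_e}$. From this, each $\theta_j$ is also coprime to the product of the remaining factors: multiplying the B\'ezout identities $1=a_{j'}\theta_j+b_{j'}\theta_{j'}$ over all $j'\neq j$ gives this directly. Hence the ideals $\ideal{\theta_j}$ are mutually coprime, and \cref{the:crt} gives
\[\qring{\pring{R_e}}{\ideal{m_1}}\isomorphic\prod_{j=1}^\eta\qring{\pring{R_e}}{\ideal{\theta_j}}.\]
This isomorphism is $\pring{R_e}$-linear, so it commutes with multiplication by $f$. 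Therefore $\Gamma_e(f,m_1)\isomorphic\bigotimes_j\Gamma_e(f,\theta_j)$. Each factor is bijective: $\bar f$ is coprime to $\Theta_j$ because $\Theta_j\mid M_1$, so $f$ is invertible modulo $(\theta_j,p^e)$ by \cref{lem:lifting-Bezout}. Alternatively, bijectivity follows because a product of maps is bijective exactly when every factor is.

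The main obstacle is the bookkeeping in the Hensel step. \cref{lem:hensel} requires one factor to be monic, the leading coefficient of the product to be a non-zero-divisor, and B\'ezout coefficients with degree bounds. I would handle this by working with monic lifts throughout, using that $m_1$ is monic. A cleaner alternative is to group the primary factors $g_i$ already produced by \cref{cor:deco-bij} and set $\theta_j=\prod_{G_i'\in\set{B}_j}g_i$. This satisfies $\bar\theta_j=\Theta_j$ automatically and avoids re-lifting altogether, so I would present that version, with the explicit lift as a remark.
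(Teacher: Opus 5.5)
Your proposal is correct and matches the paper's proof: you Hensel-lift the pairwise coprime factorization $\bar m_1=\Theta_1\cdots\Theta_\eta$ to $\pring{R_e}$ and then apply the Chinese Remainder Theorem, whose isomorphism commutes with multiplication by $f$. Your preferred variant of grouping the primary lifts $g_i$ amounts to the same argument; in fact \cref{cor:deco-bij} already applies verbatim with $\Theta_1,\dots,\Theta_\eta$ as the factors, since it only requires them to be pairwise coprime, not primary.
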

\begin{proof}
    The polynomials \(\Theta_j\) are mutually coprime, and thus we can apply Hensel's lemma to lift the factorization
    \(\bar{m}_1(x) = \prod_{j=1}^\eta\Theta_j(x)\) from \(R_1\) to \(R_e\), \(e\ge1\).
    Thus, there exist polynomials \(\theta_1, \dots, \theta_\eta \in \pring{R_e}\) with \(\bar{\theta}_j=\Theta_j\) such that the factorization lifts to
    \(m_1(x) = \prod_{j=1}^\eta\theta_j(x)\), and we obtain the isomorphism
    \[\qring{\pring{R_e}}{\ideal{m_1}} \isomorphic \qring{\pring{R_e}}{\ideal{\theta_1}} \times \cdots \times \qring{\pring{R_e}}{\ideal{\theta_\eta}}.\]
    Via this isomorphism, we can decompose the map \(\Gamma_e(f,m_1)\) as \(\Gamma_e(f,m_1) = \Gamma_e(f,\theta_1) \times \cdots \times \Gamma_e(f,\theta_\eta),\)
    leading to the claimed result.
\end{proof}

Let \(\tau_j\) be as in \cref{pro:cset-theta-field}.
\Cref{the:omega} yields
\(\cset(\Gamma_e(f,\theta-j)) = \{1, \ell_j, p\ell_j,\dots,p^{\tau_j+t}\ell_j\}\)
for some \(0 \le t < e\).
What we have shown so far is that the polynomials \(\theta_j\) are coprime and thus lead to a decomposition of the bijective subsystem, and also that their set of cycles has
similar structure as those of the lifted factorization of \(\bar{m}_1\) into irreducible polynomials \(G_i(x)^{k_i}\).
This allows us to use the polynomials \(\Theta_j\) instead of \(G_i^{k_i}\) in \cref{alg:cycle-lengths}, as we show in the following.
\begin{lemma}
    \label{lem:theta-union}
    Let \(m_1(x) = \prod_{j=1}^\eta\theta_j(x)\), where the polynomials \(\theta_j\) are defined as in \cref{pro:theta-lift}.
    Then for any \(e>1\), it holds that \(\cset(\Gamma_e(f,\theta_j)) = \cset(\Gamma_{e-1}(f,\theta_j))\cup\{\order_e(f,\theta_j)\}\).
\end{lemma}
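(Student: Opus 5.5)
We will mirror the proof of \cref{lem:irred-union}, replacing the primary factor \(g\) by \(\theta_j\). The first step is to describe the cycle-length set at level \(1\). By \cref{pro:cset-theta-field}, \(\cset(\Gamma_1(f,\theta_j)) = \{1, \ell_j, p\ell_j, \dots, p^{\tau_j}\ell_j\}\). Here \(\ell_j\) is the common order of the irreducible factors in \(\set{B}_j\), and the set is a ``chain'': it is totally ordered by divisibility, and every element other than \(1\) has the form \(p^i\ell_j\).

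The second step is to show that this chain shape is preserved at every level \(\varepsilon\). We would like
\[
\cset(\Gamma_\varepsilon(f,\theta_j)) = \{1, \ell_j, p\ell_j, \dots, p^{\tau_{j,\varepsilon}}\ell_j\}.
\]
By \cref{cor:cset-structure}, every cycle length at level \(\varepsilon\) has the form \(p^a c\) with \(c\in\cset(\Gamma_1(f,\theta_j))\). So each such length is \(1\), a power of \(p\), or \(p^i\ell_j\). Pure powers \(p^a\) with \(a\ge 1\) cannot occur: a point of period \(p^a\) would reduce modulo \(p\) to a point whose period divides \(p^a\), and the only such point is the zero fixed point, since \(p\nmid \ell_j\). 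Hence the lift lies in \(p\cdot\qring{\pring{R_\varepsilon}}{\ideal{\theta_j}}\), and we can descend using the embedding of \cref{lem:z-lift}, via the torsion-freeness argument of \cref{lem:Z-free-modules}. This induction on \(\varepsilon\) is where I expect the main care to be needed.

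We also need the set to have no gaps. Suppose \(p^i\ell_j\) is a cycle length with representative \(z\). Then the multiples \(f^{p^{i'}\ell_j}z - z\) produce, by standard arguments for linear maps, elements of period \(p^{i-i'}\ell_j\)-type. Concretely, for a bijective linear map, \(\{x : \Gamma^{k}(x)=x\}\) is a submodule for every \(k\), and the period of \(x\) is the least such \(k\). Since \(\cset\) is then closed under taking the periods of \(\Gamma^{p}\)-images, every intermediate \(p^{i'}\ell_j\) is realised. Alternatively, we can cite \cref{the:omega} directly, as \cref{lem:irred-union} does. That theorem applies here because \(\ell_j\in\csetp\) and it is the unique \(p\)-free nontrivial length.

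The third step applies \cref{the:omega} to the single \(p\)-free length \(\ell_j\) when passing from \(e-1\) to \(e\). This gives \(\tau_{j,e}\in\{\tau_{j,e-1},\tau_{j,e-1}+1\}\). Combined with the inclusion \(\cset(\Gamma_{e-1}(f,\theta_j))\subseteq \cset(\Gamma_e(f,\theta_j))\) from \cref{lem:z-lift}, it follows that \(\cset(\Gamma_e(f,\theta_j))\) equals \(\cset(\Gamma_{e-1}(f,\theta_j))\cup\{p^{\tau_{j,e}}\ell_j\}\).

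Finally, we identify the top element. By \cref{the:orbit-1}, every cycle length divides \(\per(\Gamma_e(f,\theta_j),1)\), and this period is itself a cycle length. So the maximum of the chain is \(p^{\tau_{j,e}}\ell_j=\per(\Gamma_e(f,\theta_j),1)\). The proof of \cref{lem:order-period-1} uses only that \(\Gamma_e(f,\theta_j)\) is bijective, which holds since \(\theta_j\) divides \(m_1\). It therefore gives \(\per(\Gamma_e(f,\theta_j),1)=\order_e(f,\theta_j)\), completing the claim.
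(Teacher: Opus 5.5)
Your proposal is correct and follows essentially the same route as the paper, which simply defers to the proof of \cref{lem:irred-union}: the level-one chain from \cref{pro:cset-theta-field}, then \cref{the:omega} together with the inclusion from \cref{lem:z-lift} to control the top exponent, then \cref{the:orbit-1} with \cref{lem:order-period-1} to identify the top element as \(\order_e(f,\theta_j)\). Two small corrections to your extra justifications: your first ``no gaps'' argument does not work as written (for bijective \(\Gamma\), \(\Gamma^p(z)\) has the same period as \(z\), so it produces no new lengths), so use the induction via \cref{the:omega} and \cref{lem:z-lift} instead; and excluding pure powers \(p^a\) needs \(\ell_j>1\) rather than \(p\nmid\ell_j\), although when \(\ell_j=1\) those lengths are already of the form \(p^a\ell_j\), so nothing breaks.
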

\begin{proof}
    The proof of \cref{lem:theta-union} is analogous to that of \cref{lem:irred-union}, replacing \(g\) with \(\theta_j\), and we leave out the details here.
\end{proof}
\begin{theorem}
    \label{the:cset-lcm-construction-theta}
    Let \(m_1(x) = \prod_{j=1}^\eta\theta_j(x)\), where the polynomials \(\theta_j\) are defined as in \cref{pro:theta-lift}.
    Define \(\set{K}_i = \{1, \order_e(f,\theta_j)\}\) and \(\set{H}=\set{K}_1\odot\cdots\odot\set{K}_\eta\).
    Then for any \(e > 1\), it holds that \(\cset(\Gamma_e(f,m)) = \cset(\Gamma_{e-1}(f,m))\cup\set{H}\).
\end{theorem}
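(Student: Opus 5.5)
The proof follows that of \cref{the:cset-lcm-construction} almost verbatim, with the primary factors \(g_i\) replaced by the grouped factors \(\theta_j\) from \cref{pro:theta-lift}. Only two properties of the \(g_i\) were used there.

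\begin{enumerate}
\item \(\cset(\Gamma_e(f,m_1))\) is the \(\lcm\)-product of the sets of cycle lengths of the coprime factors.
\item Each factor's set of cycle lengths has the shape \(\{1,\ell_i,p\ell_i,\dots,p^{\tau_i'}\ell_i\}\) at level \(e-1\). Moreover, \(\order_e(f,g_i)=p^{\tau_i}\ell_i\) with \(\tau_i\in\{\tau_i',\tau_i'+1\}\).
\end{enumerate}

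So the plan is to re-establish these two facts for the \(\theta_j\) and then rerun the combinatorial argument unchanged.

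For the first fact, \(\cset(\Gamma_e(f,m))=\cset(\Gamma_e(f,m_1))\) by \cref{cor:deco-nil-bij}. \Cref{pro:theta-lift} gives \(\Gamma_e(f,m_1)\isomorphic\bigotimes_j\Gamma_e(f,\theta_j)\) with each factor bijective. The argument of \cref{lem:lcm-product}, namely \cref{pro:prod-cycles} applied to sums of cycles, then yields \(\cset(\Gamma_e(f,m_1))=\bigodot_{j=1}^\eta\cset(\Gamma_e(f,\theta_j))\), and likewise at level \(e-1\).

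For the second fact, \cref{pro:cset-theta-field} gives \(\cset(\Gamma_1(f,\theta_j))=\{1,\ell_j,\dots,p^{\tau_j}\ell_j\}\). I would justify the step from level \(1\) to level \(e\) via a finer decomposition. By \cref{cor:deco-bij}, \(\theta_j\) itself splits into lifted primary factors \(g_{j,h}\), all sharing the same \(p\)-free base order \(\ell_j\). Hence \(\cset(\Gamma_e(f,\theta_j))\) is an \(\lcm\)-product of sets of the form \(\{1,\ell_j,\dots,p^{a_h}\ell_j\}\), which is again a set of that form with exponent \(\max_h a_h\). Its maximum is \(\per(\Gamma_e(f,\theta_j),1)=\order_e(f,\theta_j)\) by \cref{the:orbit-1,lem:order-period-1}. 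Combining this with \cref{the:omega} (or with \cref{lem:irred-union} applied to each \(g_{j,h}\) and taking the maximum), the top exponent grows by at most one per level. This is exactly the content of \cref{lem:theta-union}, which I take as given. In particular it yields \(\tau_j'\) and \(\tau_j\in\{\tau_j',\tau_j'+1\}\) with \(\order_e(f,\theta_j)=p^{\tau_j}\ell_j\).

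With these facts in place, I set \(\cset'=\cset(\Gamma_{e-1}(f,m))\cup\set{H}\) and prove both inclusions as before.

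\begin{itemize}
\item \(\cset'\subseteq\cset\): this is immediate. Since \(\cset(\Gamma_{e-1}(f,\theta_j))\subseteq\cset(\Gamma_e(f,\theta_j))\), the old cycle lengths survive. An element \(\lcm_{k\in K}\order_e(f,\theta_k)\) of \(\set{H}\) is realized by choosing \(1\) in the remaining factors.
\item \(\cset\subseteq\cset'\): distributivity of \(\odot\) over \(\cup\) writes \(\cset\) as the union over \(\set{S}\subseteq\{1,\dots,\eta\}\) of \(\bigodot_j\set{D}_j(\set{S})\). Each element there has the form \(h=p^\beta\lcm\{\ell_k:k\in\set{S}_0\cup\set{Y}\}\). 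I then repeat the case analysis of \cref{the:cset-lcm-construction}.
\begin{itemize}
\item If the maximum exponent comes from an old factor, \(h\in\cset(\Gamma_{e-1}(f,m_1))\).
\item If it comes from a new factor \(j_0\) with \(\tau_{j_0}<\tau_{i_0}\) for some old \(i_0\), then \(\tau_{j_0}\le\tau_{i_0}'\). Moving the exponent onto \(i_0\) shows \(h\) is already present at level \(e-1\).
\item Otherwise \(h=\lcm\{\order_e(f,\theta_k):k\in\set{S}_0\cup\set{Y}\}\in\set{H}\).
\end{itemize}
\end{itemize}

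The main point needing care is that the case analysis uses only the shape of the sets and the relation \(\tau_j\in\{\tau_j',\tau_j'+1\}\). It never uses irreducibility or primarity of the factors, so once the second fact holds for the \(\theta_j\), the argument transfers verbatim. The distinct \(\ell_j\) across classes play no role beyond being \(p\)-free, which holds since each \(\ell_j\) divides \(q^{\deg}-1\).
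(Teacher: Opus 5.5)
Your proposal is correct and takes the same route as the paper: the paper's proof only says the argument is analogous to that of \cref{the:cset-lcm-construction} with the \(g_i\) replaced by the \(\theta_j\), and you carry out exactly that transfer. You also supply the details the paper leaves out, namely why the consecutive-power shape of \(\cset(\Gamma_{e-1}(f,\theta_j))\) and the relation \(\tau_j\in\{\tau_j',\tau_j'+1\}\) survive the grouping (by splitting \(\theta_j\) into lifted primary factors that share the \(p\)-free base order \(\ell_j\)), and why the case analysis needs only that shape and \(p\)-freeness.
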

\begin{proof}
    Again, the proof is analogous to that of \cref{the:cset-lcm-construction}, replacing the polynomials \(g_i\) with \(\theta_j\), and we leave out the details here.
\end{proof}
As a result, we can formulate an optimized version of \cref{alg:cycle-lengths} in \cref{alg:cycle-lengths-opt}.
We still use \cref{alg:order}, and the overall complexity of \cref{alg:cycle-lengths-opt} remains the same as for \cref{alg:cycle-lengths}.
\input{algorithms/cycle_lengths_opt}

%% file: algorithms/cycle_lengths.tex
\begin{algorithm}[htb]
\begin{algorithmic}[1]
    \Require Polynomials \(f,m\in\pring{R}\), \(f\) monic, \(n=\deg(m)\), \(p\) prime, and \(e \in \intnum^+\)
    \Ensure \(\cset(\Gamma_e(f,m))\)
    \State \texttt{Compute} the \(f\)-decomposition \(m(x) = M_1(x) M_2(x)\) of \(m\) in \(\pring{R_1}\)
    \State \texttt{Factor} \(M_1(x) = G_1(x)^{k_1} \cdots G_r(x)^{k_r}\), where \(G_i\), \(i=1,\dots,r\), are irreducible and mutually coprime
    \State \texttt{Compute} \(\cset(\Gamma_1(f,m)) = \mleft\{\order(\bar{f},G) : G \mid M_1\mright\}\), marking \(\order(\bar{f},G_i^{k_i})\),\(i=1,\dots,r\) \Comment{\cref{the:qureshi}, \cref{alg:order}}
    \State \texttt{Compute} \(\hat{G}_i(x) = \frac{M_1(x)}{G_i(x)^{k_i}}\), \(i=1,\dots,r\)
    \State \texttt{Choose} \(\widetilde{g}_i \in \pring{R}\) with coefficients in \([0,p)\) such that \(\widetilde{g}_i \equiv \hat{G}_i \pmod{p}\), \(i=1,\dots,r\)
    \State \texttt{Set} \(\omega_{i,1} = 0\), \(i=1,\dots,r\)
    \For{\(\varepsilon = 2, \dots, e\)}
        \State \texttt{Compute} \(c_i \equiv f^{p^{\omega_{i,\varepsilon-1}}\cdot\order(\bar{f},G_i^{k_i})} \cdot \widetilde{g}_i^{p^{\varepsilon-1}} \pmod{(m, p^\varepsilon)}\), \(i=1,\dots,r\) \Comment{\cref{pro:g-hat}}
        \State \texttt{Compute} \(
            \omega_{i,\varepsilon} =
                \begin{cases}
                    \omega_{i,\varepsilon-1}, &\text{if } c_i \equiv \widetilde{g}_i^{p^{\varepsilon-1}} \pmod{(m, p^\varepsilon)}\\
                    \omega_{i,\varepsilon-1} + 1, &\text{else}
                \end{cases},
                ~ i=1,\dots,r
        \) \Comment{\cref{the:omega}}
        \State \texttt{Set} \(\set{K}_i = \{1,p^{\omega_{i,\varepsilon}}\cdot\order(\bar{f},G_i^{k_i})\}\), \(i=1,\dots,r\) \Comment{\cref{the:cset-lcm-construction}, using \(\order(\bar{f},G_i^{k_i})\) from line 3}
        \State \texttt{Compute} \(\set{H} = \bigodot_{i=1}^r \set{K}_i = \mleft\{\lcm(a_1, \dots, a_r) : a_i \in \set{K}_i\mright\}\)
        \State \texttt{Compute} \(\cset(\Gamma_\varepsilon(f,m)) = \cset(\Gamma_{\varepsilon-1}(f,m)) \bigcup \set{H}\)
    \EndFor
    \State \Return \(\cset(\Gamma_e(f,m))\)
\end{algorithmic}
\caption{Computes the set of cycle lengths \(\cset(\Gamma_e(f,m))\).}
\label{alg:cycle-lengths}
\end{algorithm}

%% file: algorithms/order.tex
\begin{algorithm}[!htb]
\begin{algorithmic}[1]
    \Require \(n \in \intnum^+\), \(F, G \in\pring{\field{q}}\) such that \(\gcd(F, G) = 1\) and \(G\) is irreducible with \(G(0) \ne 0\)
    \Ensure \(\order(F,G)\)
    \State \texttt{Set} \(z=1\)
    \If{\(G(x) = 1\)}
        \State\Return \(z\)
    \EndIf
    \State{Set} \(h = \deg(G)\)
    \State \texttt{Compute} \(p_1^{r_1} \cdots p_v^{r_v} = \operatorname{lookup\_prime\_factorization}(q^h - 1)\)
    \For{\(j=1,\dots,v\)}
        \For{\(r=r_j-1,r_j-2,\dots,0\)}
            \If{\(F^{p_1^{r_1}\cdots p_j^{r} \cdots p_v^{r_v}} \not\equiv 1 \pmod{G}\)}
                \State \texttt{Compute} \(z = z \cdot p_j^{r+1}\)
            \EndIf
        \EndFor
    \EndFor
    \State\Return \(z\)
\end{algorithmic}
\caption{Computes the generalized polynomial order \(\order(F,G)\) over \(\field_q\) for irreducible \(G\).}
\label{alg:order}
\end{algorithm}

%% file: algorithms/cycle_lengths_opt.tex
\begin{algorithm}[htb]
\begin{algorithmic}[1]
    \Require Polynomials \(f,m\in\pring{R}\), \(f\) monic, \(n=\deg(m)\), \(p\) prime, and \(e \in \intnum^+\)
    \Ensure \(\cset(\Gamma_e(f,m))\) 
    \State \texttt{Compute} the \(f\)-decomposition \(m(x) = M_1(x) M_2(x)\) of \(m\) in \(\pring{R_1}\)
    \State \texttt{Factor} \(M_1(x) = G_1(x)^{k_1} \cdots G_r(x)^{k_r}\), where \(G_i\), \(i=1,\dots,r\), are irreducible and mutually coprime
    \State \texttt{Set} \(\set{B} = \{G_i(x) : 1 \le i \le r\}\)
    \State \texttt{Compute} \(\set{B}/\erel{\bar{f}} = \{\set{B}_1, \dots, \set{B}_\eta\}\) \Comment{\cref{def:eq-rel}}
    \State \texttt{Compute} \(\Theta_j(x) = \prod_{G_i \in B_j} G_i(x)^{k_i}\), \(j=1,\dots,\eta\) \Comment{\cref{eq:theta}}
    \State \texttt{Compute} \(\cset(\Gamma_1(f,m)) = \mleft\{\order(\bar{f},G) : G \mid M_1\mright\}\), marking \(\order(\bar{f}, \Theta_j)\), \(j=1,\dots,\eta\) \Comment{\cref{the:qureshi}, \cref{alg:order}}
    \State \texttt{Compute} \(\hat{\Theta}_j(x) = \frac{M_1(x)}{\Theta_j(x)}\), \(j=1,\dots,\eta\)
    \State \texttt{Choose} \(\widetilde{g}_j \in \pring{R}\) with coefficients in \([0,p)\) such that \(\widetilde{g}_j\equiv\hat{\Theta}_j \pmod{p}\), \(j=1,\dots,\eta\)
    \State \texttt{Set} \(\omega_{j,1} = 0\), \(j=1,\dots,\eta\)
    \For{\(\varepsilon = 2, \dots, e\)}
        \State \texttt{Compute} \(c_j \equiv f^{p^{\omega_{j,\varepsilon-1}}\cdot\order(\bar{f},\Theta_j)} \cdot \widetilde{g}_j^{p^{\varepsilon-1}} \pmod{(m, p^\varepsilon)}\), \(j=1,\dots,\eta\) \Comment{\cref{pro:g-hat}}
        \State \texttt{Compute} \(
            \omega_{j,\varepsilon} =
                \begin{cases}
                    \omega_{j,\varepsilon-1}, &\text{if } c_j \equiv \widetilde{g}_j^{p^{\varepsilon-1}} \pmod{(m, p^\varepsilon)}\\
                    \omega_{j,\varepsilon-1} + 1, &\text{else}
                \end{cases},
                ~ j=1,\dots,\eta
        \) \Comment{\cref{the:omega}}
        \State \texttt{Set} \(\set{K}_j = \{1,p^{\omega_{j,\varepsilon}}\cdot\order(\bar{f},\Theta_j)\}\), \(j=1,\dots,\eta\) \Comment{\cref{the:cset-lcm-construction-theta}, using \(\order(\bar{f},\Theta_j)\) from line 6}
        \State \texttt{Compute} \(\set{H} = \bigodot_{i=1}^r \set{K}_i = \mleft\{\lcm(a_1, \dots, a_\eta) : a_j \in \set{K}_j\mright\}\)
        \State \texttt{Compute} \(\cset(\Gamma_\varepsilon(f,m)) = \cset(\Gamma_{\varepsilon-1}(f,m)) \bigcup \set{H}\)
    \EndFor
    \State \Return \(\cset(\Gamma_e(f,m))\)
\end{algorithmic}
\caption{Computes the set of cycle lengths \(\cset(\Gamma_e(f,m))\) using polynomials \(\Theta_j\) as defined in \cref{eq:theta}.}
\label{alg:cycle-lengths-opt}
\end{algorithm}

%% file: sections/5_transient_length.tex
\section{Height of \texorpdfstring{\glspl{lfds}}{LFDSs} on Cyclic \texorpdfstring{\(\pring{R_e}\)-Modules}{Modules Over Galois Rings}}
\label{sec:height}

\input{algorithms/transient_length}
In this section, we develop a simple algorithm to compute the height of \glspl{lfds} of the form \(\sys(\qring{\pring{R_e}}{\ideal{m}}, \allowbreak\Gamma_e(f,m))\).
Our algorithm relies on \cref{pro:height-field,the:orbit-1} and is given in \cref{alg:transient-lengths}

\begin{theorem}
    \label{the:alg-2}
    \Cref{alg:transient-lengths} correctly computes the height of the trees in the functional graph of the \gls{lfds} of the form
    \(\sys(\qring{\pring{R_e}}{\ideal{m}}, \Gamma_e(f,m))\) in time
    \(O^{\sim}(n^2 \cdot e^2 \cdot M(n))\).
\end{theorem}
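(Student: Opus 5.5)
We have not seen \cref{alg:transient-lengths}, so we assume it does the natural thing: compute the height \(h_1\) of the system over \(\field_q\) via \cref{pro:height-field}, then iterate \(f^k \pmod{(m,p^e)}\) starting from \(1\) and return the first \(k\) at which the orbit of \(1\) enters its cycle. The proof then splits into a correctness part and a complexity part.

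\emph{Correctness.} By \cref{the:orbit-1}(2), the height of \(\Gamma_e(f,m)\) equals \(\pper(\Gamma_e(f,m),1)\), so it suffices to compute the preperiod of \(1\). By \cref{cor:deco-nil-bij} the system splits as a nilpotent part on \(m_2\) times a bijective part on \(m_1\). Under the CRT isomorphism, \(1\) corresponds to \((1,1)\). In the bijective factor \(1\) lies on a cycle, so its preperiod there is \(0\). Hence the preperiod of \(1\) is the least \(\tau\) with \(f^\tau \equiv 0 \pmod{(m_2,p^e)}\). Equivalently, \(\tau\) is the least index with \(f^{\tau}\equiv f^{\tau+\ell}\pmod{(m,p^e)}\), where \(\ell=\per(\Gamma_e(f,m),1)\).

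To avoid computing \(m_2\) or \(\ell\), use \cref{the:lindenberg}: \(\tau\le h_1 e\), where \(h_1\) comes from \cref{pro:height-field} applied to \(\bar m=M_1M_2\) over \(\field_q\). Let \(N=h_1 e\). Then \(f^N\) is already periodic. The algorithm finds \(\tau\) as the least \(k\le N\) such that \(f^k\) lies on the cycle. Two ways to test this are:
\begin{itemize}
\item check whether \(f^k\equiv f^{k+L}\pmod{(m,p^e)}\), where \(L\) is a multiple of the period, for instance \(L=p^{e-1}\cdot\order(\bar f,M_1)\) (justified by \cref{cor:cset-structure} and \cref{the:orbit-1}(1));
\item or, more simply, check whether \(f^k\cdot u\equiv f^N\pmod{(m,p^e)}\), where \(u\) is the unit of the bijective part.
\end{itemize}
Monotonicity holds: once \(f^k\) is periodic, so is every later power. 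A linear scan or a binary search over \(k\in[0,N]\) therefore returns \(\tau\).

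\emph{Complexity.} Computing \(h_1\) requires factoring \(\bar f\) and \(\bar m\) over \(\field_q\) and reading off multiplicities, which is dominated by the later steps. Since \(h_1\le n\), we have \(N\le ne\). Each test is a modular exponentiation \(\pmod{(m,p^e)}\) with exponent at most \(N+L\). Because \(L\le p^{e}q^{n}\), this costs \(O(\mathcal M_e(n)\log(p^eq^n))=O^\sim((n+e)\mathcal M_e(n))\). Performing up to \(ne\) such tests gives \(O^\sim(ne(n+e)\mathcal M_e(n))\subseteq O^\sim(n^2e^2 M(n))\), reading \(M(n)\) as the multiplication cost \(\mathcal M_e(n)\). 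Alternatively, the powers \(f^k\) can be updated incrementally, with one exponentiation for the shift.

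\emph{Main obstacle.} The delicate step is the periodicity test: justifying that the chosen \(L\) is a multiple of \(\per(\Gamma_e(f,m),1)\). The plan is to combine three facts:
\begin{itemize}
\item \cref{lem:order-lcm-fq} shows that every cycle length over \(\field_q\) divides \(\order(\bar f,M_1)\) (the \(p\)-power growth from multiplicities of \(M_1\), via \cref{the:order-pow}, must be absorbed);
\item \cref{cor:cset-structure} adds at most a factor \(p^{e-1}\) in passing to \(R_e\);
\item \cref{the:orbit-1}(1) identifies the period of \(1\) as the maximal cycle length, which is the quantity \(L\) must cover.
\end{itemize}
If the multiplicities make \(\order(\bar f,M_1)\) insufficient, we would instead take \(L=p^{e-1+\lceil\log_p n\rceil}\lcm_i\order(\bar f,G_i)\). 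This changes the logarithmic factors only.
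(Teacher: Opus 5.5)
Your argument is essentially the paper's: the height equals \(\pper(\Gamma_e(f,m),1)\) by \cref{the:orbit-1}, the search window is bounded via \cref{the:lindenberg}, and the monotone test \(f^{h+L}\equiv f^{h}\pmod{(m,p^e)}\) is evaluated by binary search with one modular exponentiation per step. The only difference is that \cref{alg:transient-lengths} receives \(\ell_{\max}\) as an input and uses \(L=\ell_{\max}\), which equals the period of \(1\) by \cref{the:orbit-1}, so the shift costs nothing there. If you instead compute \(L=p^{e-1}\order(\bar f,M_1)\) yourself, you must also charge for that order computation. You can avoid this by using the multiple \(p^{e-1}\lvert(\qring{\pring{\field_q}}{\ideal{M_1}})^\times\rvert\), which needs only the factorization of \(M_1\). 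Your fallback is also unnecessary, since \(\order(\bar f,M_1)=\lcm_i\order(\bar f,G_i^{k_i})\) already accounts for the multiplicities.
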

\begin{proof}
    The underlying idea is simple.
    First, we compute the height of the homomorphic system over the residue field \(R_1\isomorphic\field_q\), utilizing \cref{pro:height-field}.
    Next, we apply \cref{the:lindenberg,the:orbit-1} and use a binary search to determine the actual system height over the \gls{gr}.
    Let \(h\) denote the current system height during binary search and let \(\ell_{\max}\) represent the system's maximum cycle length.
    We have that by \cref{the:orbit-1}, \(\ell \mid \ell_{\max}\) for any \(\ell \in \cset(\Gamma_e(f,m))\).
    In each step of the binary search, we examine whether the following congruence holds:
    \begin{equation*}
        f^{h+\ell_{\max}} \equiv f^{h} \pmod{(m,p^e)}.
    \end{equation*}
    If the congruence is satisfied, \(h\) is greater than or equal to the actual system height, and we adjust the binary search towards a smaller \(h\).
    If the congruence fails, \(h\) is strictly less than the actual system height, so we evaluate a larger \(h\).
    
    We can analyze the time complexity of \cref{alg:transient-lengths} in two parts.
    Lines 1--4 are dominated by the factorization of \(\bar{f}(x)\), with complexity \(O(n \cdot M(n) \cdot \log(q\cdot n))\).
    Lines 5--12 encompass the binary search.
    Since in the worst case, all states but one form a transient, and there are \(\lvert \qring{\pring{R_e}}{\ideal{m}} \rvert = q^{en}\) possible states,
    the number of iterations of the while-loop (binary search) is bounded by \(O(ne \cdot \log(q))\).
    Similar to the maximum transient length, \(\ell_{\max} < q^{en}\) such that the computation in line 7, dominated by modular exponentiation,
    operates in \(O(ne \cdot M(n) \cdot \log(q))\).
    Therefore, \cref{alg:transient-lengths} exhibits a complexity of \(O(n^2e^2 \cdot M(n) \cdot \log(q)^2)\), or \(O^{\sim}(n^2e^2 \cdot M(n))\).
\end{proof}

%% file: algorithms/transient_length.tex
\begin{algorithm}[!htb]
\begin{algorithmic}[1]
    \Require Polynomials \(f,m\in\pring{R}\), \(f\) monic, \(n=\deg(m)\), \(p\) prime, \(e\in\intnum^+\), and \(\ell_{\max} = \max(\cset(\Gamma_e(f,m)))\)
    \Ensure Height of \(\sys(\qring{\pring{R_e}}{\ideal{m}}, \Gamma_e(f, m))\)
    \State{Factor} \(\bar{f}(x) = G_1(x)^{k_1} \cdots G_r(x)^{k_s}\)
    \State{Compute} \(\tau = \max\{\lceil \kappa_{G_i}(M_1) / \kappa_{G_i}(F) \rceil : G_i \mid F\}\) \Comment{\cref{pro:height-field}}
    \State{Compute} \(r = e\tau\) \Comment{\cref{the:lindenberg}}
    \State{Initialize} \(l = \tau, h=\texttt{none}\)
    \While{\(l \le r\)} \Comment{binary search}
        \State{Compute} \(h = l + \lfloor (r-l)/2 \rfloor\)
        \If{\(f_e^{h + \ell_{\max}} \equiv f_e^{h} \pmod{m}\)} \Comment{\cref{the:orbit-1}}
        \State{Compute} \(r = h-1\)
        \Else
        \State{Compute} \(l = h+1\)
        \EndIf
    \EndWhile
    \If{\(f_e^{h + \ell_{\max}} \equiv f^{h} \pmod{m}\)}
        \State\Return \(h\)
    \Else
        \State\Return \(h+1\)
    \EndIf
\end{algorithmic}
\caption{Computes the height of \(\sys(\qring{\pring{R_e}}{\ideal{m}}, \Gamma_e(f,m))\).}
\label{alg:transient-lengths}
\end{algorithm}

%% file: sections/6_conclusion.tex
\section{Conclusion}
\label{sec:conclusion}
In this paper, we analyzed the dynamics of \glspl{lfds} that act on cyclic \(\pring{R_e}\)-modules.
Based on the \(f\)-decomposition of the defining ideal and the corresponding decomposition into nilpotent and bijective subsystems,
we developed a method to construct the set of cycle lengths.
An optimization of the computation of \(\cset(\Gamma_e(f,m))\) is achieved by grouping irreducible divisors of the defining polynomial of the bijective subsystem over \(\field_q\) via an equivalence relation
on their polynomial order.
Furthermore, we developed and analyzed practical and applicable algorithms to compute the set of cycle lengths and the system height.
With runtime complexities of \(O^{\sim}\bigl(n^{1+\log(\log(n))} + n^2e^2 + (n^2+e^2 + ne)\cdot\mathcal{M}_e(n)\bigr)\) for the cycle length analysis and 
\(O^{\sim}(n^2 \cdot e^2 \cdot M(n))\) for the system height analysis, the proposed algorithms enable the practical analysis of the dynamics of such types of \glspl{lfds}.